\newcommand{\medint}{\mathop{\mathlarger{\smallint}}\nolimits}
\newcommand{\ben}{\begin{eqnarray}}
\newcommand{\een}{\end{eqnarray}}
\newcommand{\bea}{\begin{array}}
\newcommand{\eea}{\end{array}}
\newcommand{\bes}{\begin{subequations}}
\newcommand{\ees}{\end{subequations}}
\newcommand{\bef}{\begin{figure}[H]}
\newcommand{\eef}{\end{figure}}
\newcommand{\bet}{\begin{tikzpicture}}
\newcommand{\eet}{\end{tikzpicture}}
\newcommand{\beq}{\begin{equation}}
\newcommand{\eeq}{\end{equation}}
\def\bR{\mathbf{R}}
\def\bA{\mathbf{A}}
\def\bL{\mathbf{L}}
\def\b0{\mathbf{0}}
\def\bx{\mathbf{x}}
\def\ba{\mathbf{a}}
\def\bb{\mathbf{b}}
\def\bJ{\mathbf{J}}
\def\bS{\mathbf{S}}
\def\bI{\mathbf{I}}
\def\bM{\mathbf{M}}
\title{Skew Gradient Embedding for Thermodynamically Consistent Systems}
\author{Xuelong Gu\thanks{Department of Mathematics, University of South Carolina (xgu@mailbox.sc.edu)}
\and Qi Wang\thanks{Corresponding author. Department of Mathematics, University of South Carolina (qwang@math.sc.edu).}}
\begin{document}

\maketitle

\begin{abstract}
	We propose a novel Skew Gradient Embedding (SGE) framework for systematically reformulating thermodynamically consistent partial differential equation (PDE) models-capturing both reversible and irreversible processes-as generalized gradient flows. These models include a wide spectrum of models in classical electrodynamics, fluid mechanics, quantum mechanics, rheology of complex fluids, solid mechanics, and statistical physics. Exploiting the intrinsic structure of generalized gradient flow models, especially, the skew symmetric component expressed by the exterior 2-form, we develop a unified stabilization strategy for constructing numerical schemes that either preserve the energy dissipation rate or ensure discrete energy stability.	This stabilization strategy enables the design of both first- and second-order schemes, highlighting the flexibility and generality of the SGE approach in algorithm development. A key strength of SGE is its flexible treatment of skew-gradient (zero-energy-contribution) terms arising from reversible dynamics either implicitly or explicitly. While treated explicitly, it often leads to a natural decoupling of the governing equations in multiphysics systems, thereby improving computational efficiency without compromising stability or accuracy. Numerical experiments confirm the robustness, accuracy, and performance advantages of the proposed schemes.

\end{abstract}
\begin{keywords}
	Thermodynamically consistent systems, General Gradient systems, Thermodynamically consistent algorithms
\end{keywords}
\begin{AMS}
	65M06, 65M12, 35Q35
\end{AMS}

\section{Introduction}
In classical electrodynamics, fluid and solid mechanics, quantum mechanics, rheology of complex fluids, statical physics, the conservation laws together with constitutive equations consist of a thermodynamically consistent partial differential equation system of the following form:
\begin{equation}\label{eq:intro-general1}
	\mathbf{R}\cdot \partial_t \Phi = \mathbf{M} (\Phi)\cdot \nabla F(\Phi) + \mathbf{J}(\Phi), \ \Phi(0)=\Phi_0,
\end{equation}
where
\begin{itemize}
	\item $\Phi$ represents the state variable in a Hilbert space $H$ equipped with inner product $(\cdot, \cdot)_H$ and $\nabla$ denotes the spatial gradient operator with respect to $\Phi$ (a variational operator);
	\item  $F(\Phi)$ is the energy of the system;
	\item  $\mathbf{M}(\Phi)$ is the mobility operator assumed symmetric and negative semi-definite;
	\item $\mathbf{R}$ is the identity operator in nondegenerate cases, otherwise, it satisfies
	      \ben
	      (\nabla F(\Phi),  \partial_t{\Phi} )_H = (\nabla F(\Phi),  \mathbf{R} \cdot \partial_t{\Phi} )_H;
	      \een
	\item $\mathbf{J}(\Phi)$, termed the zero-energy-contribution (ZEC) term (see \cite{Yang-2021-CPC, Yang-2021-JCP}), satisfies the orthogonality condition $( \mathbf{J}(\Phi), \nabla F(\Phi) )_H = 0$.
\end{itemize}
It is straightforward to verify that system \eqref{eq:intro-general1} adheres to the following energy dissipation law
\begin{equation}\label{eq:intro-energy-law}
	\tfrac{d F}{dt} = (\nabla F(\Phi), \mathbf{R}\cdot \dot{\Phi})_H = (\nabla F(\Phi), \mathbf{M}(\Phi)\nabla F(\Phi))_H \leq 0.
\end{equation}
many models are
In this paper,we study  this class of thermodynamically consistent systems  and their structure-preserving
numerical approximations.

Near-equilibrium nonequilibrium phenomena are typically described by constitutive models derived from the Onsager's linear response theory \cite{OnsagerI-1931, OnsagerII-1931, Wang-2020} together with various conservation laws, like mass, momentum, energy, etc. The constitutive equation derived from the generalized Onsager principle \cite{Wang-2020} normally consists of a gradient flow system \cite{Lu-2025-SISC}:
\begin{equation}\label{eq:intro-gradient-system}
	\partial_t \tilde{\Phi} = \mathbf{L}(\Phi) \nabla F(\tilde{\Phi}),
\end{equation}
where $\tilde{\Phi}$ is a subset of $\Phi$ and $\mathbf{L}(\tilde \Phi)$ the mobility operator.  Collectively, the full governing system of equations emerges in the form of \eqref{eq:intro-general1}. There are numerous examples in the literature in which the governing system of equations exhibits the structure of \eqref{eq:intro-general1}, such as the Navier-Stokes equations for viscous fluid flows \cite{Rorger-2001}, thermodynamically consistent hydrodynamic models for complex fluids \cite{Jiang-2024-yield, Zhou-2018-Onsager},  multiphase fluid flow models \cite{Chen-2020-CHNS, Kay-2007-CHNS, Shen-2015-CHNS, Gong-2018-CHNS, Gong-2016-Binary, Zhao-2021-CHNS, Yang-2025-JCP, Yang-2021-Multi, Hong-2024-JCP, Hong-2023-JCP, Yang-2023-JCP}, and Smoluchowski systems for complex fluid flows \cite{wang2002hydrodynamic,wang2002kinetic,wang2002hydrodynamic2}, Maxwell equations \cite{jackson1998classical}, Schr\"odinger equations \cite{wang2021central}, MHD systems, etc.


Note  that if $\bJ$ can be written into the following skew gradient form
\ben
\bJ=\bS(\Phi)\cdot \nabla F,
\een
where $\mathbf{S}(\Phi)$ is a skew-symmetric or antisymmetric operator,
the entire governing system of equation in \eqref{eq:intro-general1} can be recast into a generalized gradient flow with a symmetric and skew-symmetric component in the mobility operator as follows, reflecting irreversible and reversible dynamics, respectively,
\begin{equation}\label{eq:intro-gradient-system2}
	\bR\cdot \partial_t \Phi = \mathbf{M}(\Phi) \nabla F(\Phi) + \mathbf{S}(\Phi) \nabla F(\Phi),
\end{equation}
where $\mathbf{M}(\Phi) = \bM^\top \leq 0$, $\mathbf{S}(\Phi) =-\bS^\top$. We refer to this reformulation process as the skew gradient embedding.

The zero energy contribution (ZEC) term arises naturally since 1) the generalized Onsager principle in \eqref{eq:intro-gradient-system} often includes transport mechanisms for reversible processes in the form of an antisymmetric mobility operator encompassing material derivatives to accurately capture transport, convection and rotation of the material in Eulerian coordinates \cite{Wang-2020,Jiang-2024-yield}; 2) the conservation laws comprise convective derivatives contributing zero energy dissipation to the system. Thus, the general form in \eqref{eq:intro-gradient-system2} encompasses a wider range  of physically significant models beyond the classical gradient flow model. This class of models is thus named the generalized gradient flow.

When $\bM=0$, model \eqref{eq:intro-gradient-system2} is energy conservative, making it particularly suitable for the numerical approximations developed for conservative systems \cite{Hairer-2006-SP}. In fact, for a subclass of conservative systems—namely Hamiltonian systems—numerical approximations have been extensively investigated in the literature \cite{Hairer-2006-SP}.  For dissipative systems ($\bM\neq 0$), however, it is often sufficient to construct numerical schemes that merely guarantee discrete energy decay rather than following the exact energy dissipation rate \eqref{eq:intro-energy-law}. These methods, referred to as energy-stable schemes, ensure the monotonic decrease of a discrete energy functional over time, although the exact dissipation rate may differ from that of the continuous system. Classical examples of energy-stable schemes include the convex splitting methods \cite{CS1,CS2}, stabilization methods \cite{Hou-2020,Xiao-2017,Tang-2016,Lu-Stabilization,Wang-CH-Stabilization}, and exponential time difference methods \cite{Ju-2018,Du-2019,Du-2021}. A comprehensive review of the numerical methods for energy-stable schemes is available in \cite{Tang-2020}.

Most of the structure-preserving schemes developed so far are fully implicit. It is challenging to extend these schemes to general models with nonlinear energies or difficult to extend them to high orders. Inspired by ideas from \cite{EQ1}, the energy quadratization (EQ) approach was proposed in \cite{EQ2,EQ3,EQ4} to construct linearly implicit schemes applicable to gradient flows with general nonlinear terms, preserving a modified quadratic energy. Subsequently, the scalar auxiliary variable (SAV) approach \cite{SAV-Shen,SAV-Li,SAV-NLSW,ESAV-High,ESAV-KG} introduced a simplified implementation of energy quadratization, requiring the solution of only a linear system with a constant-coefficient matrix and a scalar nonlinear equation at each time step.

However, both EQ and SAV methods preserve only a modified quadratic energy and their stability hinges upon the accuracy of the augmented ODE for the new EQ variable, potentially leading to accuracy and stability issues. To improve consistency between the original and quadratic energies, relaxation techniques were introduced in \cite{Jiang-2022,Zhang-2022-GSAV}. Recently, new frameworks capable of constructing linearly implicit schemes, while preserving the original energy law were developed, including supplementary variable methods \cite{Gong-2021} and relaxation Runge-Kutta methods \cite{Li-2023,Li-2023-JCP,Li-2024-SISC}.

So far, all aforementioned approaches have been developed specifically for gradient flow systems with occasional extensions to some hydrodynamical models. Consequently, little effort has been dedicated to devising general thermodynamically consistent algorithms for more general systems like \eqref{eq:intro-general1}. Occasionally, existing methods are tailored to specific systems of form \eqref{eq:intro-general1}. For instance, the Navier-Stokes equation commonly requires rewriting convection terms into skew-symmetric forms to construct energy-stable discretization (see \cite{Gong-2016-Binary,Gong-2018-CHNS}). Similarly, numerical schemes for multi-phase flow models typically exploit particular structure of $\mathbf{J}(\Phi)$. When energy functionals become more complex, such as in liquid crystal models, some temporal discrete schemes were designed to preserve discrete zero energy contribution properties; however, these structure-preserving properties may fail to be preserved after the spatial discretization (see \cite{Zhao-2016-JSC}).

Moreover, implicit discretization of $\mathbf{J}(\Phi)$ is necessary to ensure thermodynamic consistency in some cases \cite{Gong-2016-Binary,Gong-2018-CHNS,Hong-2023-JCP,Hong-2024-JCP}. Since $\mathbf{J}(\Phi)$ is generally neither symmetric nor positive definite, such implicit implementation typically results in challenging linear or nonlinear systems, significantly eroding computational efficiency. Although recent SAV-like frameworks allow explicit implementation of $\mathbf{J}(\Phi)$ \cite{Shen-2015-CHNS,Yang-2019-JCP,Zhang-2022-GSAV,Lu-2025-SISC,SAV-NS}, these methods typically preserve only the modified energy rather than the original energy functional.

In this paper, we propose a general framework to systematically construct thermodynamically consistent numerical algorithms for the general thermodynamically consistent model in \eqref{eq:intro-general1}. The first strategy in our approach is to recast the zero energy contribution term, $\mathbf{J}(\Phi)$, into an equivalent skew-gradient form with respect to chemical potential $F(\Phi)$ using an exterior 2-form.  Through this reformulation, \eqref{eq:intro-general1} is transformed into an equivalent generalized gradient flow system. Then, we devise another systematic approach to devise structure-preserving numerical algorithms for the generalized gradient flow system using the   discrete gradient method and stabilization strategy guided by the convex-splitting idea. This suite of theoretical and computational approaches is termed skew gradient embedding. The major advantages of our proposed SGE framework are summarized as follows:
\begin{itemize}
	\item SGE provides a universal and systematic strategy for transforming the general ZEC term $\mathbf{J}(\Phi)$ into a skew-gradient form with respect to the given energy $F(\Phi)$ via an exterior 2-form, resulting in a skew gradient component in the model. The skew-gradient reformulation naturally preserves the discrete ZEC property independently of the spatial discretization.
	\item The proposed framework facilitates the construction of thermodynamically consistent algorithms with fully explicit implementation of $\mathbf{J}(\Phi)$. In addition, if the energy gradient is discretized using a linearly implicit scheme, the resulting numerical scheme achieves efficiency comparable to the SAV-based approaches
	\item The discrete energy dissipation law depends solely on the discretization of the energy gradient $\nabla F$. Specifically, if a discrete-gradient approach is adopted for $\nabla F$, SGE rigorously preserves the original energy-dissipation law instead of a modified version.
	\item Guided by the idea of convex-splitting, a set of stabilization schemes are devised to provide a set of first and second order energy stable schemes for system \eqref{eq:intro-general1} with respect to the original energy functionals.
\end{itemize}

The remainder of the paper is structured as follows. In \S 2, we introduce essential notations and present SGE. In addition, we present multiple illustrative examples from complex fluid dynamics, demonstrating how one use SGE to systematically reformulate these systems into equivalent generalized gradient flow systems. In \S 3, we develop thermodynamically consistent schemes for the generalized gradient flow system obtained via the SGE reformulation. In particular, we detail how stabilization can be incorporated into the thermodynamically consistent schemes guided by convex-splitting. \S 4 is dedicated to applying some developed numerical schemes  to several representative model systems; we detail the construction and numerical implementation procedures therein. Numerical experiments and comparative analyses are performed to highlight the efficiency, accuracy, and robustness of the proposed methods. Finally, we conclude the study in \S5.

\section{Skew gradient embedding (SGE)}
In this section, we introduce the skew gradient embedding (SGE) method for thermodynamically consistent systems in detail. We then  demonstrate the implementation of the SGE framework by reformulating various systems into equivalent generalized gradient flow models using exterior 2-forms via several examples drawn from the field of complex fluid flows systematically.

\subsection{Preliminary}
We begin by introducing some useful notations. Let \( (H, (\cdot,\cdot)_H) \) denote a Hilbert space for functions and \( \nabla \) represent the corresponding gradient operator defined via the Riesz representation theorem, allowing identification of \( H \) with its dual $H^*$. For elements \( \Phi, \Psi, \Gamma, \Phi_i \in H \) (\( i=1,2 \)), we write inner product \( (\Phi, \Psi)_H \) as \( \Phi^\top \Psi \) and define wedge product $\Phi \wedge \Psi : H \times H \to \mathbb{R}$ through
\ben
(\Phi_1 \wedge \Phi_2)(\Psi_1, \Psi_2)_H := \det\bigl[((\Phi_i, \Psi_j)_H)_{1\le i,j\le2}\bigr].
\een
Moreover, for any $\Psi \in H$, we define \( (\Phi_1 \wedge \Phi_2)\Psi \in H \), such that \( ((\Phi_1 \wedge \Phi_2)\Psi, \Gamma)_H = (\Phi_1 \wedge \Phi_2)(\Psi, \Gamma)_H \) for any \( \Gamma \in H \). Finally, given a linear operator \( \mathbf{L}:H \to H \), its adjoint \( \mathbf{L}^\top \) is defined by \( (\mathbf{L}\Phi, \Psi)_H = (\Phi, \mathbf{L}^\top\Psi)_H \).  The operator \( \mathbf{L} \) is said to be symmetric if \( \mathbf{L} = \mathbf{L}^\top \) and skew-symmetric if \( \mathbf{L}^\top = -\mathbf{L} \). It is straightforward to verify that wedge product \( \Phi \wedge \Psi \) is a skew-symmetric operator on \( H \) and therefore an exterior 2-form.

A nonequilibrium system is termed a generalized gradient flow system if it can be expressed in the following form:
\begin{equation}\label{eq:gradient-general}
	\mathbf{R} \cdot \Phi_t = \mathbf{L}(\Phi) \nabla F(\Phi),
\end{equation}
where $\bL \cdot \bR$ is the friction operator, $\Phi$ denotes the state variable, and $F(\Phi)$ is the associated energy functional. Without loss of generality, we assume the free energy, $F(\Phi) = \frac{1}{2} (\Phi, \mathcal{L} \Phi)_H + f(\Phi)$, where $\mathcal{L}$ is a symmetric and positive definite linear operator, and $f$ is a nonlinear function. In this case, the chemical potential is given by $\nabla F=\frac{\delta}{\delta \Phi }F(\Phi) = \mathcal{L} \Phi + \nabla f(\Phi)$. We call the generalized gradient flow system thermodynamically consistent if
\ben
(\nabla F, \Phi_t)_H=(\nabla F, \mathbf{R} \cdot \Phi_t)_H.
\een
Notice that $\bL(\Phi)$ can be decomposed as follows: $\mathbf{L}(\Phi) = \mathbf{M}(\Phi) + \mathbf{S}(\Phi)$, where
\begin{equation*}
	\mathbf{M}(\Phi) = \tfrac{1}{2}(\mathbf{L}(\Phi) + \mathbf{L}(\Phi)^\top), \ \mathbf{S}(\Phi) = \tfrac{1}{2}(\mathbf{L}(\Phi) - \mathbf{L}(\Phi)^\top).
\end{equation*}
Then, \eqref{eq:gradient-general} becomes
\begin{equation}\label{eq:gradient-split}
	\bR \cdot \Phi_t = (\mathbf{M}(\Phi)  + \mathbf{S}(\Phi) )\nabla F(\Phi).
\end{equation}

A primary motivation for considering the generalized gradient system is that \eqref{eq:gradient-split} provides a unified framework for  developing energy conserving or dissipative numerical algorithms. A representative example is the following discrete gradient flow model:
\begin{equation}\label{eq:gradient}
	\bR \cdot \tfrac{\Phi^{n+1} - \Phi^n}{\tau} = (\mathbf{M}(\Phi^{n+\frac{1}{2}}) + \mathbf{S}(\Phi^{n+\frac{1}{2}})) \overline{\nabla} F(\Phi^n, \Phi^{n+1}),
\end{equation}
where $\overline{\nabla} F(\bullet, \bullet)$ denotes the discrete gradient, defined by
\begin{equation}\label{eq:discrete-gradient}
	\overline{\nabla} F(\Phi, \Phi) = \nabla F(\Phi), \quad (\overline{\nabla} F(\Psi, \Phi), \Psi - \Phi)_H = F(\Psi) - F(\Phi).
\end{equation}
Taking the inner product of \eqref{eq:gradient} with $\overline{\nabla} F(\Phi^{n+1}, \Phi^n)$ on both sides and utilizing the assumption to $\mathbf{R}$, we show that scheme \eqref{eq:gradient} preserves the semi-discrete energy law
\begin{equation}
	F(\Phi^{n+1}) - F(\Phi^n) =
	\left\lbrace
	\begin{aligned}
		 & \tau(\overline{\nabla} F, \mathbf{M}(\Phi^{n+\frac{1}{2}}) \overline{\nabla} F)_H \leq 0, \  &  & \mathbf{M}_{sym} \neq \mathbf{0}, \\
		 & 0,  \                                                                                        &  & \mathbf{M}_{sym} = \mathbf{0}.
	\end{aligned}
	\right.
\end{equation}
Note that a given energy functional may admit multiple discrete gradients satisfying \eqref{eq:discrete-gradient}, among which the averaged vector field (AVF) discrete gradient is particularly useful:
\begin{equation}
	\overline{\nabla} F(\Psi, \Phi) = \medint_0^1 \nabla F(\xi \Psi + (1 - \xi) \Phi) d\xi.\label{dF}
\end{equation}

\subsection{Generalized gradient flow formulation of a general dissipative system}

Notice that most dissipative systems are not originally written as generalized  gradient flow forms \eqref{eq:gradient-general}. For instance, the viscous Burgers equation, the incompressible Naiver-Stokes (NS) equation,   Cahn-Hilliard-Navier-Stokes (CH-NS) equations, thermodynamically consistent equation systems governing complex fluids (e.g., Oldroyd-B, Smoluchowski equations), etc. Nonetheless, these systems share the structure of system \eqref{eq:intro-general1} and
satisfy the following orthogonal condition
\begin{equation}\label{eq:orthogonal}
	(\nabla F(\Phi), \mathbf{J}(\Phi))_H = 0.
\end{equation}
Next, we  present Theorem \ref{thm:grad-reformulation}, which demonstrates how to recast \eqref{eq:intro-general1} into  a generalized gradient flow model.

\begin{theorem}\label{thm:grad-reformulation}
	Assume $\|\nabla F(\Phi)\|_H \neq 0$. Then, \eqref{eq:intro-general1} can be formulated into the following generalized gradient flow:
	\begin{equation}\label{eq:general-reformulation}
		\mathbf{R} \cdot \dot{\Phi} = \mathbf{L}(\Phi) \nabla F(\Phi) := (\mathbf{M}(\Phi) + \mathbf{S}(\Phi))\nabla F(\Phi),
	\end{equation}
	where $\mathbf{S}(\Phi) = \frac{\nabla F(\Phi) \wedge \mathbf{J}(\Phi)}{\|\nabla F(\Phi)\|_H^2}$ is a skew-symmetric operator.
\end{theorem}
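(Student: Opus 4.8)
The plan is to verify directly that the operator $\bS(\Phi)$ exhibited in the statement reproduces $\bJ(\Phi)$ when applied to $\nabla F(\Phi)$, so that the theorem collapses to a single contraction identity for the wedge product together with the already-recorded skew-symmetry of $\Phi\wedge\Psi$. In other words, once $\bS(\Phi)\nabla F(\Phi)=\bJ(\Phi)$ is established, substituting it into \eqref{eq:intro-general1} immediately gives \eqref{eq:general-reformulation} with $\bL(\Phi)=\bM(\Phi)+\bS(\Phi)$.

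First I would unwind the definition of the operator $(\Phi_1\wedge\Phi_2)\Psi$ from the Preliminary. For arbitrary $\Gamma\in H$,
\ben
((\Phi_1\wedge\Phi_2)\Psi,\Gamma)_H=(\Phi_1\wedge\Phi_2)(\Psi,\Gamma)_H=(\Phi_1,\Psi)_H(\Phi_2,\Gamma)_H-(\Phi_1,\Gamma)_H(\Phi_2,\Psi)_H,
\een
and since $\Gamma$ is arbitrary this yields the closed form $(\Phi_1\wedge\Phi_2)\Psi=(\Phi_1,\Psi)_H\,\Phi_2-(\Phi_2,\Psi)_H\,\Phi_1$. Specializing to $\Phi_1=\nabla F(\Phi)$, $\Phi_2=\bJ(\Phi)$, $\Psi=\nabla F(\Phi)$ and invoking the orthogonality condition \eqref{eq:orthogonal}, $(\bJ(\Phi),\nabla F(\Phi))_H=0$, the second term drops out and one gets $(\nabla F(\Phi)\wedge\bJ(\Phi))\nabla F(\Phi)=\|\nabla F(\Phi)\|_H^2\,\bJ(\Phi)$. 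Dividing by $\|\nabla F(\Phi)\|_H^2$, which is licit by the hypothesis $\|\nabla F(\Phi)\|_H\neq 0$, gives $\bS(\Phi)\nabla F(\Phi)=\bJ(\Phi)$, as desired.

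It then remains to certify the structural claims on $\bS(\Phi)$. The Preliminary already observes that every wedge product $\Phi\wedge\Psi$ is skew-symmetric on $H$; multiplying by the positive scalar $1/\|\nabla F(\Phi)\|_H^2$ preserves this, so $\bS(\Phi)^\top=-\bS(\Phi)$. Hence $\bM(\Phi)$ remains the symmetric (negative semi-definite) part and $\bS(\Phi)$ the antisymmetric part of the assembled mobility $\bL(\Phi)$, and the dissipation law \eqref{eq:intro-energy-law} is inherited because $(\nabla F,\bS(\Phi)\nabla F)_H=0$ for any skew-symmetric operator. I do not expect a genuine obstacle here: the only delicate point is tracking the slot ordering in the determinant that defines $\Phi_1\wedge\Phi_2$, so that the contraction against $\nabla F(\Phi)$ produces the $+\|\nabla F(\Phi)\|_H^2\,\bJ(\Phi)$ term with the correct sign and the orthogonality condition annihilates the spurious $\nabla F(\Phi)$ component.
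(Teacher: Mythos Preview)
Your proposal is correct and follows essentially the same approach as the paper: both verify $\bS(\Phi)\nabla F(\Phi)=\bJ(\Phi)$ by expanding the wedge product against an arbitrary test element, using the orthogonality condition \eqref{eq:orthogonal} to kill the $\nabla F(\Phi)$-component, and then invoke the already-noted skew-symmetry of $\Phi_1\wedge\Phi_2$. The only cosmetic difference is that you first extract the closed form $(\Phi_1\wedge\Phi_2)\Psi=(\Phi_1,\Psi)_H\,\Phi_2-(\Phi_2,\Psi)_H\,\Phi_1$ and then specialize, whereas the paper works directly with the $2\times 2$ determinant paired against a test function $\Psi$.
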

\begin{proof}
	The skew-symmetry of $\mathbf{S}(\Phi)$ follows directly from its definition. It remains to show that $\mathbf{S}(\Phi)\nabla F(\Phi) = \mathbf{J}(\Phi)$. For any $\Psi \in H$, we have
	\begin{equation*}
		\begin{aligned}
			(\mathbf{S}(\Phi)\nabla F(\Phi), \Psi)_H & = \frac{1}{\|\nabla F(\Phi)\|_H^2} ( (\nabla F(\Phi) \wedge \mathbf{J}(\Phi)) \nabla F(\Phi), \Psi)_H                                                                                                                                                                                                                \\
			                                         & = \frac{1}{\|\nabla F(\Phi)\|_H^2}  \left| \begin{matrix} (\nabla F(\Phi), \nabla F(\Phi))_H & (\nabla F(\Phi), \Psi)_H \\ (\mathbf{J}(\Phi), \nabla F(\Phi))_H & (\mathbf{J}(\Phi), \Psi)_H \end{matrix} \right|                                                                                                    \\
			                                         & = \frac{1}{\|\nabla F(\Phi)\|_H^2} \left| \begin{matrix} \|\nabla F(\Phi)\|_H^2                                                                                                                     & (\nabla F(\Phi), \Psi)_H \\ 0 & (\mathbf{J}(\Phi), \Psi)_H  \end{matrix} \right| = (\mathbf{J}(\Phi), \Psi)_H.
		\end{aligned}
	\end{equation*}
	Moreover, when $\nabla F(\Phi) = 0$, we define $\mathbf{S}(\Phi) = 0$. This completes the proof.
\end{proof}

In previous works, preserving the original energy properties often requires discretizing $\mathbf{J}(\Phi)$ in \eqref{eq:intro-general1} implicitly to guarantee \eqref{eq:orthogonal} at the discrete level. However, since $\mathbf{J}(\Phi)$ is absorbed into antisymmetric operator $\mathbf{S}(\Phi)$, it becomes feasible to discretize $\mathbf{J}(\Phi)$ explicitly without affecting the energy dissipative property. The explicit discretization can then enhance the efficiency of the obtained numerical scheme, especially when $\mathbf{J}(\Phi)$ is a non-local or nonlinear operator.

There are multiple ways to represent \eqref{eq:intro-general1} as a generalized gradient flow system.
We note that many recent numerical methods can be interpreted within this framework. For example, in \cite{Yang-2021-CPC}, Yang et al. introduced an scalar auxiliary variable $q(t) = 1$ to decouple the antisymmetric part from the symmetric part for fully decouple energy-stable schemes by reformulating \eqref{eq:intro-general1} into the following gradient flow system:
\begin{equation}\label{eq:sav-yang}
	\left\lbrace
	\begin{aligned}
		\dot{\Phi} & = \mathbf{M}(\Phi) \nabla F(\Phi) + q \mathbf{J}(\Phi), \\
		\dot{q}    & = -\mathbf{J}(\Phi)^\top \nabla F(\Phi).                \\
	\end{aligned}
	\right.
\end{equation}
With an initial condition $q(0) = 1$, \eqref{eq:sav-yang} is equivalent to \eqref{eq:orthogonal} since $\mathbf{J}(\Phi)^\top \nabla F(\Phi)=0$. Notice that \eqref{eq:sav-yang} is the following gradient system in higher-dimensional space
\begin{equation}\label{eq:gradient-yang}
	\frac{\partial }{\partial t} \begin{pmatrix} \Phi \\ q \end{pmatrix} = \begin{pmatrix} \mathbf{M}(\Phi) & 0 \\ 0 & 0 \end{pmatrix}
	\begin{pmatrix}
		\nabla F(\Phi) \\ f^\prime(q)
	\end{pmatrix}
	+
	\begin{pmatrix}
		0                      & \mathbf{J}(\Phi) \\
		-\mathbf{J}(\Phi)^\top & 0
	\end{pmatrix}
	\begin{pmatrix}
		\nabla F(\Phi) \\ f^\prime(q)
	\end{pmatrix},
\end{equation}
where $f(q) = \frac{1}{2}|q|^2$. This indicates that \eqref{eq:sav-yang} is a gradient flow system with a modified free energy $\tilde{F}(\Phi, q) = F(\Phi) + f(q)$ and mobility in a higher dimensional phase space. However, due to the modified structure, explicit  discretization of $\mathbf{J}(\Phi)$ often fails to conserve (or dissipate) the original energy.

\begin{remark}
	We note that the proposed technique can be combined with SAV or EQ methods, as well as with the approach in \cite{Lu-2025-SISC}, to produce various generalized gradient flow reformulations of \eqref{eq:intro-general1}. These reformulations facilitate the construction of high order, efficient conservative or dissipative numerical schemes that explicitly discretize $\mathbf{J}(\Phi)$ via the methods presented below. However, due to space limitations, we will not detail the SAV nor EQ approach in this paper.
\end{remark}

We next present several examples of dissipative partial differential equations and show how to recast them into generalized gradient flows using the above theorem.

\begin{example}[Viscous Burgers equation]\label{ex:burgers}
	We consider the viscous Burgers equation subject to a periodic boundary condition as follows,
	\begin{equation}\label{eq:burgers}
		u_t + u u_x = \nu u_{xx},
	\end{equation}
	where  $\nu > 0$ is the viscosity. We set $H = L_p^2$ equipped with the inner-product defined by $(u, v)_{L^2} = \int_\Omega uv dx$, where $p$ stands for periodic boundary conditions, and define the gradient of energy functional $F(u) = \frac{1}{2}\|u\|_{L^2}^2$ as its variational derivative $\frac{\delta F(u)}{\delta u}$.

	Taking the inner-product of \eqref{eq:burgers} with $\frac{\delta F(u)}{\delta u}$, we arrive at $\frac{d F(u)}{dt} = -\nu \|u_x\|_{L^2}^2$. This confirms the dissipative property of $\eqref{eq:burgers}$ based on orthogonality property $(u\partial_xu, u) = 0$. However, $u\partial_x$ is not skew-symmetry, i.e., $v, w \in H_p^1 \subset L_p^2$, $(u \partial_x v, w) \neq -(v, u\partial_x w)$. Consequently, the Burgers equation in its standard form \eqref{eq:burgers} is not in a generalized gradient flow form yet.

	Using the technique introduced above, we recast \eqref{eq:burgers} into an equivalent generalized gradient system as follows
	\begin{equation}\label{eq:burgers-continuous-gradient01}
		u_t = (\nu \partial_{xx} - \tfrac{u\wedge (uu_x)}{\|u\|_{L_p^2}^2} 	) u = \nu\partial_{xx} u - (u, \tfrac{u}{\|u\|_{L^2}^2})_{L_p^2} uu_x + (uu_x, \tfrac{u}{\|u\|^2_{L_p^2}})_{L_p^2} u.
	\end{equation}
	Alternatively, \eqref{eq:burgers} can be recast into yet another generalized gradient flow form:
	\begin{equation}\label{eq:burgers-continuous-gradient02}
		u_t = \nu \partial_{xx} u + \mathbf{S}(u) u,
	\end{equation}
	where $\mathbf{S}(u) v = -\frac{1}{3} (u\partial_x v + \partial_x (uv))$ is skew-symmetry. Note that with formulation \eqref{eq:burgers-continuous-gradient02}, the convective term must be discretized in a linearly implicit manner to ensure energy dissipation. In contrast,   \eqref{eq:burgers-continuous-gradient01} facilitates the development of energy-dissipative schemes through an explicit discretization of the skew gradient term.
\end{example}


\begin{example}[Incompressible NS equation]
	We consider the dimensionless incompressible NS equation with a periodic boundary condition as follows
	\begin{equation}\label{eq:ns-origin}
		\left\lbrace
		\begin{aligned}
			 & \mathbf{v}_t + (\mathbf{v} \cdot \nabla) \mathbf{v} = -\nabla p + \tfrac{1}{Re} \Delta \mathbf{v}, \\
			 & \nabla \cdot \mathbf{v} = 0,
		\end{aligned}
		\right.
	\end{equation}
	where $Re$ is the Reynolds number.
	We introduce variable $\mathbf{\Phi} = (\mathbf{v}, p)$ and define space $H = [ L_p^2(\Omega) ]^3$, with the inner product defined by $(\Phi, \Psi) := \int_\Omega \Phi \cdot \Psi d\mathbf{x}$, where $\Phi \cdot \Psi$ denotes the dot product between vectors.

	We define the "free" energy by $F(\Phi) = \int_\Omega \tfrac{1}{2} \|\mathbf{v}\|^2 d\mathbf{x}$, \eqref{eq:ns-origin} can be recast into the following compact form
	\begin{equation*}
		\mathbf{R}\Phi_t = \mathbf{M} \delta_{\Phi} F +  \mathbf{J}(\Phi),
	\end{equation*}
	where
	\begin{equation}\label{eq:NS-operator}
		\mathbf{R} =
		\begin{pmatrix}
			\mathbf{I} & 0 \\
			0          & 0
		\end{pmatrix}, \quad
		\mathbf{M} =
		\begin{pmatrix}
			\tfrac{1}{Re}\Delta & 0 \\
			0                   & 0 \\
		\end{pmatrix}, \quad
		\mathbf{J}(\Phi) = -
		\begin{pmatrix}
			(\mathbf{v} \cdot \nabla) \mathbf{v} \\ 0
		\end{pmatrix}
		-
		\begin{pmatrix}
			0            & \nabla \\
			\nabla \cdot & 0
		\end{pmatrix}
		\Phi.
	\end{equation}
	Since $(\mathbf{J}(\Phi), \Phi) = 0$, the NS equation can be reformulated into a generalized gradient flow:
	\begin{equation}\label{eq:ns-generalized-gdflow}
		\mathbf{R} \partial_t \Phi = \left(\mathbf{M} + \tfrac{(\mathbf{v}, 0) \wedge \mathbf{J}(\Phi)}{\|\mathbf{v}\|^2} \right) \delta_\Phi F = (\mathbf{M} + \mathbf{S}_1(\Phi) + \mathbf{S}_2(\Phi)) \delta_{\Phi} F,
	\end{equation}
	where $\mathbf{S}_1(\Phi) = - \tfrac{1}{\|\mathbf{v}\|^2} (\mathbf{v}, 0) \wedge (\mathbf{v} \cdot \nabla \mathbf{v}, 0)$ and $\mathbf{S}_2(\Phi) = -\tfrac{1}{\|\mathbf{v}\|^2} (\mathbf{v}, 0) \wedge (\nabla p, \nabla \cdot \mathbf{v})$, both of which are antisymmetric.
\end{example}
\begin{example}[CH-NS equation]
	The CH-NS system describing hydrodynamics of a binary, incompressible, viscous fluid flow is given by
	\begin{equation}\label{eq:CH-NS-origin}
		\left\lbrace
		\begin{aligned}
			 & \rho(\mathbf{v}_t + \mathbf{v} \cdot \nabla \mathbf{v}) = -\nabla p + \nu \Delta \mathbf{v} - \phi \nabla \mu, \\
			 & \nabla \cdot \mathbf{v} = 0,                                                                                   \\
			 & \phi_t + \nabla \cdot (\mathbf{v} \phi) = M \Delta \mu,                                                        \\
			 & \mu = - \gamma \varepsilon \Delta \phi + \tfrac{\gamma}{\varepsilon} f^\prime(\phi),
		\end{aligned}
		\right.
	\end{equation}
	where $\rho$ is the constant density, $\nu$ the viscosity, $M$ the mobility, $\varepsilon$ the interface parameter and $\gamma$ a parameter characterizing the surface tension. Let $\Phi = (\mathbf{v}, \phi, p)^\top$, free energy $F(\Phi) = \int_\Omega [\tfrac{\rho}{2}|\mathbf{v}|^2 + \tfrac{\varepsilon \gamma}{2} |\nabla \phi|^2 + \tfrac{\gamma}{\epsilon} f(\phi)] d\mathbf{x}$. We have  $\nabla_{\Phi}F = (\nabla_{\mathbf{v}} F, \nabla_\phi F, \nabla_p F) = (\rho\mathbf{v}, \mu, 0)$. Then, \eqref{eq:CH-NS-origin} can be recast into the following generalized gradient flow
	\begin{equation}\label{eq:ch-gradient}
		\mathbf{R}\Phi_t = (\mathbf{M} + \tfrac{ \nabla_{\Phi} F \wedge \mathbf{J}(\Phi) }{\rho^2 \|\mathbf{v}\|^2 + \|\mu\|^2} ) \nabla_{\Phi}F = (\mathbf{M} + \mathbf{S}_1(\Phi) + \mathbf{S}_2(\Phi) ) \nabla_{\Phi} F,
	\end{equation}
	where $\bR=diag\{ \bI, 1, 0\}$,  $\mathbf{M} = diag\{ \tfrac{\nu}{\rho}\bm{\Delta}, M \Delta, 0 \}$, $\mathbf{S}_i(\Phi) = \tfrac{\nabla_\Phi F \wedge \mathbf{J}_i(\Phi)}{\rho^2 \|\mathbf{v}\|^2 + \|\mu\|^2}, i = 1, 2$, and $\mathbf{J}(\Phi) = \mathbf{J}_1 (\Phi) + \mathbf{J}_2(\Phi)$
	\begin{equation}\label{eq:CH-NS-operator}
		\mathbf{J}_1(\Phi) = - \begin{pmatrix} \mathbf{v} \cdot \nabla \mathbf{v} + \tfrac{1}{\rho}\phi \nabla \mu \\  \nabla \cdot (\mathbf{v} \phi) \\ 0 \end{pmatrix},  \ \mathbf{J}_2(\Phi) = \begin{pmatrix} - \tfrac{1}{\rho} \nabla p \\ 0 \\ \nabla \cdot \mathbf{v} \end{pmatrix}.
	\end{equation}
\end{example}
\begin{remark}
	It is important to note that while \eqref{eq:ch-gradient} facilitates the construction of an explicit energy dissipative scheme, the resulting scheme may fail to preserve the volume of each fluid phase defined by $\int_\Omega \phi d\bx$.   Consequently, in practical computations, we replace $\mu$ by  $\overline{\mu} = \mu - \frac{1}{|\Omega|}\int_\Omega \mu d\mathbf{x}$ in \eqref{eq:ch-gradient} to safeguard the volume conservation since \eqref{eq:CH-NS-origin} remains valid when a spatial independent constant is added to $\mu$.
\end{remark}

\section{Numerical Schemes for generalized  gradient flow systems}
Given generalized gradient flow models, we develop some structure-preserving numerical schemes for them based on reformulation \eqref{eq:general-reformulation}. Let $N_t$ be a positive integer and $T$ the terminal time. We  partition time domain $[0, T)$ uniformly into a set of grid points with step size $\tau = T/N_t$. Let $\{t_n = n \tau | n = 0, \cdots, N_t-1\}$ be the grid points and $I_{n+\frac{1}{2}} = [t_n, t_{n+1}]$. We define the following finite difference operators in time:
\begin{equation}
	\begin{gathered}
		\delta_\tau (\bullet)^{n+1} = \tfrac{(\bullet)^{n+1} - (\bullet)^n}{\tau}, \ \delta_{2\tau} (\bullet)^{n+1} = \tfrac{3(\bullet)^{n+1} - 4(\bullet)^n + (\bullet)^{n-1}}{2\tau},
		d_t(\bullet)^{n+1} = (\bullet)^{n+1} - (\bullet)^n,\\
		\ d_{tt}(\bullet)^n =  (\bullet)^{n+1} - 2 (\bullet)^n + (\bullet)^{n-1},
		d_{2t} (\bullet)^{n+1} = 3(\bullet)^{n+1} - 4 (\bullet)^n + (\bullet)^{n-1}.
	\end{gathered}
\end{equation}

Next, we propose several schemes that preserve energy-conservation or dissipation  of the generalized gradient flow system.

\subsection{Dissipation rate preserving schemes}
We begin by proposing several dissipation rate preserving schemes. These schemes strictly preserve the energy dissipation rate at the discrete level, making them suitable for both dissipative and conservative systems.

\begin{scheme}[Discrete-gradient scheme] Suppose that we have solved $\Phi^n$, then, we update it to $\Phi^{n+1}$ by
	\begin{equation}\label{eq:dg}
		\begin{aligned}
			\delta_\tau \Phi^{n+1} & = \mathbf{L}(\Phi^{n+\frac{1}{2}}) \overline{\nabla} F(\Phi^{n+1}, \Phi^n),
		\end{aligned}
	\end{equation}
	where the right hand side is defined by \eqref{dF}.
\end{scheme}
\begin{theorem}\label{thm:dg}
	The discrete-gradient scheme in \eqref{eq:dg} preserves energy dissipation rate as follows
	\begin{equation*}
		F(\Phi^{n+1}) - F(\Phi^n) = \tau ( \overline{\nabla} F, \mathbf{M}(\Phi^{n+\frac{1}{2}}) \overline{\nabla} F)_H.
	\end{equation*}
\end{theorem}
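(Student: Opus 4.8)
The plan is to test the scheme against the discrete gradient, exactly mirroring the continuous energy computation in \eqref{eq:intro-energy-law}. Concretely, I would take the $H$-inner product of \eqref{eq:dg} with $\tau\,\overline{\nabla} F(\Phi^{n+1},\Phi^n)$, so that the left-hand side becomes $(\Phi^{n+1}-\Phi^n,\,\overline{\nabla} F(\Phi^{n+1},\Phi^n))_H$ and the right-hand side becomes $\tau\,(\mathbf{L}(\Phi^{n+\frac12})\overline{\nabla} F,\,\overline{\nabla} F)_H$. Everything then reduces to invoking the two defining properties of the discrete gradient in \eqref{eq:discrete-gradient} together with the symmetric/skew decomposition of $\mathbf{L}$ from \eqref{eq:gradient-split}.

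For the left-hand side, I would apply the defining relation $(\overline{\nabla} F(\Psi,\Phi),\,\Psi-\Phi)_H = F(\Psi)-F(\Phi)$ with $\Psi=\Phi^{n+1}$ and $\Phi=\Phi^n$; the only point requiring care here is to keep the arguments of $\overline{\nabla}F$ in the order $(\Phi^{n+1},\Phi^n)$, which is why \eqref{eq:dg} is written with precisely this ordering, so that the sign of $F(\Phi^{n+1})-F(\Phi^n)$ comes out correctly. For the right-hand side, I would write $\mathbf{L}(\Phi^{n+\frac12})=\mathbf{M}(\Phi^{n+\frac12})+\mathbf{S}(\Phi^{n+\frac12})$ and use that for any skew-symmetric operator $\mathbf{S}$ and any $w\in H$ one has $(\mathbf{S}w,w)_H=(w,\mathbf{S}^\top w)_H=-(\mathbf{S}w,w)_H$, hence $(\mathbf{S}w,w)_H=0$; taking $w=\overline{\nabla} F(\Phi^{n+1},\Phi^n)$ kills the skew part and leaves only $\tau\,(\overline{\nabla} F,\mathbf{M}(\Phi^{n+\frac12})\overline{\nabla} F)_H$. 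Combining the two sides gives the asserted identity.

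There is essentially no analytical obstacle here: the statement is an exact algebraic identity, requiring neither regularity nor smallness of $\tau$. The one structural input that must be in place is that $\mathbf{S}(\Phi^{n+\frac12})$ is genuinely skew-symmetric on $H$ — which holds either because it is the skew part $\tfrac12(\mathbf{L}-\mathbf{L}^\top)$, or, when $\mathbf{L}$ arises from the SGE reformulation of Theorem~\ref{thm:grad-reformulation}, because $\mathbf{S}(\Phi)=\nabla F(\Phi)\wedge\mathbf{J}(\Phi)/\|\nabla F(\Phi)\|_H^2$ is a wedge-product operator and hence an exterior $2$-form. Crucially, this skew-symmetry is a property of the operator alone and is unaffected by how $\nabla F$ is approximated by $\overline{\nabla}F$; this is exactly what lets the scheme preserve the \emph{original} dissipation rate, since the discrete gradient governs only the $\mathbf{M}$-term while the $\mathbf{J}$-term contributes zero regardless of its discretization. (Were a nontrivial factor $\mathbf{R}$ present on the left of \eqref{eq:dg}, one would additionally need the discrete analogue of the compatibility $(\nabla F,\Phi_t)_H=(\nabla F,\mathbf{R}\,\Phi_t)_H$; as the scheme is stated, this is not needed.)
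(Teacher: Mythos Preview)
Your proposal is correct and matches the paper's own argument essentially verbatim: the paper simply states that the result follows by taking the inner product of \eqref{eq:dg} with $\overline{\nabla}F(\Phi^{n+1},\Phi^n)$, which is exactly the computation you outline (invoking the discrete-gradient identity on the left and the skew-symmetry of $\mathbf{S}(\Phi^{n+\frac12})$ on the right). Your parenthetical about $\mathbf{R}$ is also on point, since the scheme \eqref{eq:dg} as stated omits $\mathbf{R}$.
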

The result of Theorem \ref{thm:dg} can be readily obtained by taking inner products on both sides of \eqref{eq:dg} with $\overline{\nabla}F(\Phi^{n+1}, \Phi^n)$.

\begin{scheme}[Temporal Petrov-Galerkin scheme]
	We introduce a temporal Petrov-Galerkin scheme, which can be of arbitrarily high-order. For a better presentation, we recast the gradient flow system into the following form
	\begin{equation}\label{eq:gradient-mix}
		\left\lbrace
		\begin{aligned}
			\dot{\Phi} & = \mathbf{L}(\Phi)\mu, \\
			\mu        & = \nabla F(\Phi).
		\end{aligned}
		\right.
	\end{equation}
	The temporal Petrov-Galerkin method is then presented as follows: finding $\Phi_h \in \mathbb{P}_{s}(I_{n+\frac{1}{2}}) \otimes H$, $\mu_h \in \mathbb{P}_{s-1}(I_{n+\frac{1}{2}}) \otimes H$, such that for any $U_h, \nu_h \in \mathbb{P}_{s-1}(I_{n+\frac{1}{2}}) \otimes H$
	\begin{equation}\label{eq:pg}
		\left\lbrace
		\begin{aligned}
			 & \medint_{I_{n+\frac{1}{2}}} (\dot{\Phi}_h, U_h)_H dt = \medint_{I_{n+\frac{1}{2}}} ( \mathbf{L}(\mu_h) \Psi_h, U_h)_H dt, \\
			 & \medint_{I_{n+\frac{1}{2}}} (\mu_h, \nu_h)_H dt = \medint_{I_{n+\frac{1}{2}}} (\nabla F(\Phi_h), \nu_h)_H dt.
		\end{aligned}
		\right.
	\end{equation}
	Here, $\mathbb{P}_s(I_{n+\frac{1}{2}})$ represents the space of  polynomials defined in $I_{n+\frac{1}{2}}$ with degrees no more than $s$.
\end{scheme}
\begin{theorem}
	We denote $\Phi^n = \Phi_h(t_n)$. The Petrov-Galerkin scheme preserves the energy dissipation rate as follows
	\begin{equation*}
		F(\Phi^{n+1}) - F(\Phi^n) = \medint_{I_{n+\frac{1}{2}}} (\mathbf{M}(\Phi_h)\mu_h, \mu_h)_H dt.
	\end{equation*}
\end{theorem}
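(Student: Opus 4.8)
The plan is to replicate, at the discrete level, the continuous energy identity $\tfrac{d}{dt}F(\Phi) = (\nabla F(\Phi),\dot\Phi)_H = (\bL(\Phi)\mu,\mu)_H$, using the fact that the Petrov--Galerkin trial/test pairing is designed precisely so that the quantities appearing in this computation are legitimate test functions. Since $\Phi_h \in \mathbb{P}_s(I_{n+\frac12})\otimes H$ is a polynomial in $t$ of degree at most $s$ (with $s\ge 1$), its time derivative satisfies $\dot\Phi_h \in \mathbb{P}_{s-1}(I_{n+\frac12})\otimes H$, so $\dot\Phi_h$ is an admissible choice for the test function $\nu_h$; at the same time $\mu_h \in \mathbb{P}_{s-1}(I_{n+\frac12})\otimes H$ is directly an admissible choice for $U_h$. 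This double admissibility is the heart of the argument.

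First I would write, by the fundamental theorem of calculus together with the chain rule for $F$ along the smooth curve $t\mapsto\Phi_h(t)$ on $I_{n+\frac12}$,
\[
F(\Phi^{n+1}) - F(\Phi^n) = \medint_{I_{n+\frac12}} \tfrac{d}{dt}F(\Phi_h(t))\,dt = \medint_{I_{n+\frac12}} (\nabla F(\Phi_h),\dot\Phi_h)_H\,dt.
\]
Next I would choose $\nu_h = \dot\Phi_h$ in the second equation of \eqref{eq:pg} to replace $\nabla F(\Phi_h)$ by $\mu_h$ under the time integral, yielding $F(\Phi^{n+1}) - F(\Phi^n) = \medint_{I_{n+\frac12}}(\mu_h,\dot\Phi_h)_H\,dt$. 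Then I would choose $U_h = \mu_h$ in the first equation of \eqref{eq:pg}, which gives $\medint_{I_{n+\frac12}}(\dot\Phi_h,\mu_h)_H\,dt = \medint_{I_{n+\frac12}}(\bL(\Phi_h)\mu_h,\mu_h)_H\,dt$. Combining these two identities produces $F(\Phi^{n+1}) - F(\Phi^n) = \medint_{I_{n+\frac12}}(\bL(\Phi_h)\mu_h,\mu_h)_H\,dt$.

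Finally I would invoke the decomposition $\bL(\Phi) = \bM(\Phi) + \bS(\Phi)$ with $\bM^\top = \bM$ and $\bS^\top = -\bS$ used throughout the paper, including in the reformulation \eqref{eq:general-reformulation}. Because $\bS(\Phi_h(t))$ is skew-symmetric on $H$ for each fixed $t$, we have $(\bS(\Phi_h(t))\mu_h(t),\mu_h(t))_H = 0$ pointwise in $t$, so the skew part contributes nothing to the integral and only the $\bM(\Phi_h)$ term survives, giving the claimed identity. I expect the only genuine subtlety to be the polynomial-degree bookkeeping that certifies $\dot\Phi_h$ and $\mu_h$ as valid test functions (so that one is actually allowed to substitute them into \eqref{eq:pg}); the rest is a direct transcription of the continuous energy law. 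A secondary remark worth including is that in the degenerate case one should pose the scheme with $\bR$ and test with $\dot\Phi_h$ using the compatibility $(\nabla F,\dot\Phi_h)_H = (\nabla F,\bR\dot\Phi_h)_H$, but since \eqref{eq:pg} is written for the nondegenerate system this is not needed here.
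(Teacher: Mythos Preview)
Your proposal is correct and follows essentially the same route as the paper: choose $U_h=\mu_h$ and $\nu_h=\dot\Phi_h$ (legitimate since $\dot\Phi_h\in\mathbb{P}_{s-1}\otimes H$), combine the two resulting identities via the chain rule $\tfrac{d}{dt}F(\Phi_h)=(\nabla F(\Phi_h),\dot\Phi_h)_H$, and drop the skew-symmetric part. Your explicit justification of the polynomial-degree bookkeeping and the remark on the degenerate $\bR$ case are useful additions, but the argument itself is the paper's.
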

\begin{proof}
	Taking the test function $U_h = \mu_h$ and $\nu_h = \dot{\Phi}_h$, we obtain
	\begin{equation*}
		\begin{aligned}
			\medint_{I_{n+\frac{1}{2}}} (\dot{\Phi}_h, \mu_h)_H dt & = \medint_{I_{n+\frac{1}{2}}} ((\mathbf{M}(\Phi_h) + \mathbf{S}(\Phi_h))\mu_h, \mu_h)_H dt \\
			                                                       & = \medint_{I_{n+\frac{1}{2}}} (\mathbf{M}(\Phi_h)\mu_h, \mu_h)_H dt.
		\end{aligned}
	\end{equation*}
	Notice that
	\begin{equation*}
		\begin{aligned}
			\medint_{I_{n+\frac{1}{2}}} (\mu_h, \dot{\Phi}_h)_H dt = \medint_{I_{n+\frac{1}{2}}} (\nabla F(\Phi_h), \dot{\Phi}_h)_H dt = F(\Phi^{n+1})  - F(\Phi^n).
		\end{aligned}
	\end{equation*}
	Combining the above two equations, we obtain the desired result.
\end{proof}

Note that term $\mathbf{J}(\Phi)$ in the above two schemes, \eqref{eq:dg} and \eqref{eq:pg}, is implement fully implicitly. In practices, this may be inefficient, or not solver friendly if there exist non-local or nonlinear  operators in $\mathbf{J}(\Phi)$, such as the discrete convection operator in \eqref{eq:NS-operator} and \eqref{eq:CH-NS-operator}. It is preferable to propose numerical schemes that implement such term explicitly. Since operator $\mathbf{S}(\Phi)$ does not participate in energy dissipation, we   modify the discrete-gradient scheme as follows.
\begin{scheme}[Modified discrete gradient]
	\begin{equation}\label{eq:mdg}
		\delta_\tau \Phi^{n+1}  = \mathbf{L}(\hat{\Phi}^{n+\frac{1}{2}}) \overline{\nabla} F(\Phi^{n+1}, \Phi^n).
	\end{equation}
\end{scheme}
It is readily to confirm that the MDG scheme, \eqref{eq:mdg}, preserves the dissipation law as that in Theorem \ref{thm:dg} with dissipation rate $(\overline{\nabla}F, \mathbf{M}(\hat{\Phi}^{n+\frac{1}{2}}) \overline{\nabla} F)_H$. Moreover, $\hat{\Phi}^{n+\frac{1}{2}}$ is an appropriate second-order approximation to $\Phi(t_{n+\frac{1}{2}})$, which may have multiple choices. For example, extrapolation $\hat{\Phi}^{n+\frac{1}{2}} = \tfrac{3}{2} \Phi^n - \tfrac{1}{2} \Phi^{n-1}$ is a choice. To guarantee stability, it is recommended to generate $\hat{\Phi}^{n+\frac{1}{2}}$ using a prediction step, for example, we can use the following scheme to get a prediction
\begin{equation}
	\tfrac{\Phi^{n+1/2} - \Phi^n}{\tau / 2} = (\mathbf{M}(\Phi^{n}) + \mathbf{S}(\Phi^n)) (\mathcal{L} \Phi^{n+1/2} + \nabla f(\Phi^n)) .
\end{equation}
Since the antisymmetric component does not show up in the energy dissipation rate, the explicit treatment of its discretization would not affect the overall energy dissipation rate.

\subsection{Stabilized schemes for dissipative systems}
We next introduce several other energy-stable schemes for the generalized gradient flow system systematically motivated by the concept of convex splitting. Although these schemes do not exactly preserve the energy dissipation rate, they are energy-stable nevertheless. In these schemes, a modified energy is shown to decay faster than a modified dissipation rate in which the perturbation to the original energy and dissipation rate are of higher order in terms of the temporal step size.  The convex-splitting scheme is designed based on the assumption that the energy can be reformulated into a difference of two convex functions:  $F(\Phi) = F_c(\Phi) - F_e(\Phi)$, where both $F_c(\Phi)$ and $F_e(\Phi)$ are convex. Under this assumption, the following first order numerical scheme can be shown to be energy stable:
\begin{scheme}[Convex-splitting scheme]
	\begin{equation}\label{eq:cs1}
		\delta_\tau \Phi^{n+1} = (\mathbf{M}(\Phi^n) + \mathbf{S}(\Phi^n))(\nabla F_c(\Phi^{n+1}) - \nabla F_e(\Phi^n)).
	\end{equation}
\end{scheme}

To show the energy stability of the convex-splitting scheme, we invoke the following Lemma \ref{lem:lem-cs}.
\begin{lemma}[\cite{CS2}]\label{lem:lem-cs}
	The following inequality holds true for convex functions $F_c(\phi)$, $F_e(\phi)$ and $F=F_c-F_e$:
	\begin{equation*}
		F(\Phi^{n+1}) - F(\Phi^n) \leq (\nabla F_c(\Phi^{n+1}) - \nabla F_e(\Phi^n), \delta_{\tau} \Phi^{n+1})_H.
	\end{equation*}
\end{lemma}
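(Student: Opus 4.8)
The plan is to prove Lemma~\ref{lem:lem-cs} directly from the convexity of $F_c$ and $F_e$ by applying the standard first-order characterization of convex functions at two points, namely $\Phi^{n+1}$ and $\Phi^n$.

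First I would write $F(\Phi^{n+1}) - F(\Phi^n) = \bigl(F_c(\Phi^{n+1}) - F_c(\Phi^n)\bigr) - \bigl(F_e(\Phi^{n+1}) - F_e(\Phi^n)\bigr)$ and bound the two differences separately. For the convex part $F_c$, convexity gives the supporting-hyperplane inequality
\begin{equation*}
	F_c(\Phi^{n+1}) - F_c(\Phi^n) \leq (\nabla F_c(\Phi^{n+1}), \Phi^{n+1} - \Phi^n)_H,
\end{equation*}
which is the convexity estimate evaluated with the gradient taken at the \emph{later} point $\Phi^{n+1}$. For the part we are subtracting, I apply convexity of $F_e$ at the \emph{earlier} point $\Phi^n$ to obtain the lower bound
\begin{equation*}
	F_e(\Phi^{n+1}) - F_e(\Phi^n) \geq (\nabla F_e(\Phi^n), \Phi^{n+1} - \Phi^n)_H.
\end{equation*}
Subtracting the second inequality from the first yields
\begin{equation*}
	F(\Phi^{n+1}) - F(\Phi^n) \leq (\nabla F_c(\Phi^{n+1}) - \nabla F_e(\Phi^n), \Phi^{n+1} - \Phi^n)_H,
\end{equation*}
and since $\Phi^{n+1} - \Phi^n = \tau\, \delta_\tau \Phi^{n+1}$ with $\tau > 0$, dividing through by $\tau$ (or just recognizing $\delta_\tau\Phi^{n+1} = (\Phi^{n+1}-\Phi^n)/\tau$ appears only through the inner product, which scales linearly) gives exactly the claimed bound.

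There is essentially no obstacle here: the only subtlety is bookkeeping about which gradient is evaluated at which time level, and the fact that the two convexity inequalities must be applied in opposite directions (upper bound for $F_c$, lower bound for $F_e$) so that the subtraction preserves the inequality sign. If one wanted to be careful about differentiability, one could note that $F_c, F_e$ are assumed smooth enough for $\nabla F_c, \nabla F_e$ to be the Gateaux derivatives in $H$, so the supporting-hyperplane inequality is simply the statement that $\xi \mapsto F_c(\Phi^n + \xi(\Phi^{n+1}-\Phi^n))$ is a convex scalar function whose graph lies below its tangent at $\xi = 1$; the analogous remark handles $F_e$ at $\xi = 0$. This is the same argument as in \cite{CS2}, included here for completeness and to make the stability proof of scheme~\eqref{eq:cs1} self-contained: combining this lemma with the discrete energy identity obtained by testing \eqref{eq:cs1} against $\delta_\tau\Phi^{n+1}$ and using $\bM(\Phi^n)\leq 0$ together with the skew-symmetry of $\bS(\Phi^n)$ will immediately give $F(\Phi^{n+1}) \leq F(\Phi^n)$.
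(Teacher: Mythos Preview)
Your argument is correct and follows exactly the same route as the paper: apply the supporting-hyperplane inequality for $F_c$ at the implicit point $\Phi^{n+1}$ and for $F_e$ at the explicit point $\Phi^n$, then add. The only cosmetic wrinkle is that the lemma as stated has $\delta_\tau\Phi^{n+1}$ rather than $d_t\Phi^{n+1}=\Phi^{n+1}-\Phi^n$ on the right (so your ``divide by $\tau$'' remark would also scale the left side), but this is a typo in the statement and not a defect in your reasoning; the paper itself uses the $d_t$ version in the proof of Theorem~\ref{thm:stability-cs1}.
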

In fact, for any convex functions $F_c$ and $F_e$, we have
\begin{equation*}
	\bea{l}
	F_c(\Phi^{n+1}) - F_c(\Phi^n) \leq (\nabla F_c(\Phi^{n+1}), \delta_{\tau} \Phi^{n+1})_H,\\
	\\
	-F_e(\Phi^{n+1}) + F_e(\Phi^n) \leq -( \nabla F_e(\Phi^n), \delta_{\tau} \Phi^{n+1})_H.
	\eea
\end{equation*}
Adding the two inequalities, we arrive at
\begin{equation*}
	F_c(\Phi^{n+1})-F_e(\Phi^{n+1})  - (F_c(\Phi^n)-F_e(\Phi^n) )\leq (\nabla F_c(\Phi^{n+1}) - \nabla F_e(\Phi^n), \delta_{\tau} \Phi^{n+1})_H.
\end{equation*}

\begin{theorem}\label{thm:stability-cs1}
	The first-order convex-splitting scheme is energy-stable in that
	\begin{equation}
		\begin{gathered}
			d_t F(\Phi^{n+1}) \leq \tau \left(\nabla F_c(\Phi^{n+1}) - \nabla F_e(\Phi^n), \mathbf{M}(\Phi^n) (\nabla F_c(\Phi^{n+1}) - \nabla F_e(\Phi^n))\right)_H.
		\end{gathered}
	\end{equation}
\end{theorem}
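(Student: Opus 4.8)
The plan is to combine the scheme's structure with the skew-symmetry of $\mathbf{S}$ and the convexity of $F_c$ and $F_e$, following exactly the pattern of the displayed computation preceding Lemma~\ref{lem:lem-cs}. Write $w := \nabla F_c(\Phi^{n+1}) - \nabla F_e(\Phi^n)$ for brevity, so that \eqref{eq:cs1} reads $\delta_\tau \Phi^{n+1} = (\mathbf{M}(\Phi^n) + \mathbf{S}(\Phi^n))w$, equivalently $d_t \Phi^{n+1} = \Phi^{n+1} - \Phi^n = \tau(\mathbf{M}(\Phi^n) + \mathbf{S}(\Phi^n))w$.

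First I would pair the scheme with $w$ in the $H$ inner product: $(w, d_t\Phi^{n+1})_H = \tau(w, \mathbf{M}(\Phi^n)w)_H + \tau(w, \mathbf{S}(\Phi^n)w)_H$. Since $\mathbf{S}(\Phi^n)$ is skew-symmetric (by Theorem~\ref{thm:grad-reformulation}, with the convention $\mathbf{S}(\Phi^n)=0$ when $\nabla F(\Phi^n)=0$), the cross term vanishes, giving $(w, d_t\Phi^{n+1})_H = \tau(w, \mathbf{M}(\Phi^n)w)_H$. Note this holds even though $\mathbf{S}(\Phi^n)$ is evaluated explicitly: skew-symmetry annihilates the quadratic form regardless of the time level of its argument.

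Next I would invoke convexity of $F_c$ and $F_e$ in the form $F_c(\Phi^{n+1}) - F_c(\Phi^n) \leq (\nabla F_c(\Phi^{n+1}), d_t\Phi^{n+1})_H$ and $F_e(\Phi^n) - F_e(\Phi^{n+1}) \leq -(\nabla F_e(\Phi^n), d_t\Phi^{n+1})_H$; adding these (this is the content of Lemma~\ref{lem:lem-cs}) yields $d_t F(\Phi^{n+1}) \leq (w, d_t\Phi^{n+1})_H$. Substituting the identity from the previous step gives $d_t F(\Phi^{n+1}) \leq \tau(w, \mathbf{M}(\Phi^n)w)_H$, which is precisely the claimed estimate; since $\mathbf{M}(\Phi^n)=\mathbf{M}(\Phi^n)^\top \leq 0$, the right-hand side is nonpositive and the scheme is unconditionally energy stable.

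I do not expect any substantial obstacle: the argument reduces to a single inner-product manipulation once the scheme is written as $d_t\Phi^{n+1}=\tau\mathbf{L}(\Phi^n)w$. The only items warranting a remark are the convention $\mathbf{S}(\Phi^n)=0$ at critical points of $F$ (so the scheme and the estimate remain well defined there) and, separately, the solvability of the implicit step \eqref{eq:cs1} for $\Phi^{n+1}$—which follows from convexity of $F_c$ making it a monotone problem, but is not needed for the stability statement itself.
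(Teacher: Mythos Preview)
Your proposal is correct and follows essentially the same route as the paper: take the $H$ inner product of the scheme with $\tau w=\tau(\nabla F_c(\Phi^{n+1})-\nabla F_e(\Phi^n))$, use skew-symmetry of $\mathbf{S}(\Phi^n)$ to drop that term, and apply Lemma~\ref{lem:lem-cs} to bound $d_tF(\Phi^{n+1})$ by $(w,d_t\Phi^{n+1})_H$. Your additional remarks on the convention $\mathbf{S}(\Phi^n)=0$ at critical points and on solvability are fine observations but not part of the paper's proof.
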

\begin{proof}
	By taking the inner product of \eqref{eq:cs1} with $\tau(\nabla F_c(\Phi^{n+1}) - \nabla F_e(\Phi^n))$ and invoking Lemma \ref{lem:lem-cs} as well as the skew-symmetry property of $\mathbf{S}(\Phi^n)$, we obtain
	\begin{equation*}
		\begin{aligned}
			 & d_t F(\Phi^{n+1}) \leq (\nabla F_c(\Phi^{n+1}) - \nabla F_e(\Phi^n), d_t \Phi^{n+1})_H,                                                    \\
			 & \quad = \tau \left(\nabla F_c(\Phi^{n+1}) - \nabla F_e(\Phi^n), \mathbf{M}(\Phi^n) (\nabla F_c(\Phi^{n+1}) - \nabla F_e(\Phi^n))\right)_H.
		\end{aligned}
	\end{equation*}
	The proof is thus completed.
\end{proof}
The theorem claims that the discrete energy dissipation rate is smaller than an approximation to the exact energy dissipation rate given by
\begin{equation*}
	\left(\nabla F_c(\Phi^{n+1}) - \nabla F_e(\Phi^n), \mathbf{M}(\Phi^n) (\nabla F_c(\Phi^{n+1}) - \nabla F_e(\Phi^n))\right)_H,
\end{equation*}
indicating that the convex splitting strategy may introduce additional dissipation.

We note that in real-world applications, the range of $\Phi$ is normally confined in a compact domain. As a result, we can always assume the free energy, $F(\Phi)$, is bounded by a quadratic function when $\|\Phi\|>>1$ and $\nabla f(\Phi)$ is Lipschitz continuous with Lipschitz constant $L$. Under this assumption, we can always construct a convex-splitting of $F(\Phi)$ by setting
\begin{equation*}
	F_c(\Phi) = \tfrac{1}{2}(\Phi, \mathcal{L}\Phi)_H+\tfrac{1}{2}(\bM_q\Phi, \Phi)_H, \  F_e(\Phi) = \tfrac{1}{2}(\bM_q\Phi, \Phi)_H -  f(\Phi),
\end{equation*}
where $\bM_q>0$ is a symmetric and positive definite linear operator with a large enough $\|\bM_q\|$.

For example, we set $\bM_q= A\mathbf{I}$ and $A$ is large enough. The following is a legitimate convex splitting of $F$.
\begin{equation*}
	F_c(\Phi) = \tfrac{1 }{2}(\Phi, \mathcal{L}\Phi)_H+\tfrac{A}{2}(\Phi, \Phi)_H, \  F_e(\Phi) = \tfrac{A}{2}(\Phi, \Phi)_H -  f(\Phi)
\end{equation*}
In this case, \eqref{eq:cs1} yields the following first order linearly implicit stabilization scheme
\begin{equation}\label{eq:stabilization-BDF1}
	\delta_{\tau} \Phi^{n+1} = \mathbf{L} (\Phi^n) (\mathcal{L} \Phi^{n+1} + A d_t \Phi^{n+1} + \nabla f(\Phi^n)), \ A\geq L.
\end{equation}
The energy stability of \eqref{eq:stabilization-BDF1} follows Theorem \ref{thm:stability-cs1}.

\begin{remark}\label{rmk:efficient-solver}
	It is readily to show that the stabilization scheme leads to a linear system of the form,
	\begin{equation}\label{eq:general-linear-system}
		\mathbf{A} \mathbf{x} + \mathbf{S} \mathbf{x} = \mathbf{f},
	\end{equation}
	where $\mathbf{A}$ is symmetric positive-definite and $\mathbf{S} = \mathbf{a}\mathbf{b}^\top - \mathbf{b} \mathbf{a}^\top$ is a skew-symmetric, rank-2 operator with $\mathbf{a}, \mathbf{b}, \bx \in H$. Introducing $\xi = \mathbf{b}^\top \mathbf{x}$ and $\eta = \mathbf{a}^\top \mathbf{x}$, The linear system \eqref{eq:general-linear-system} can be rewritten equivalently as
	\begin{equation} \label{eq:general-linear-system-equiv}
		\left\lbrace
		\begin{aligned}
			 & \mathbf{A} \mathbf{x} + \xi \mathbf{a} - \eta \mathbf{b} = \mathbf{f}, \\
			 & \xi = \mathbf{b}^\top \mathbf{x},                                      \\
			 & \eta = \mathbf{a}^\top \mathbf{x}.
		\end{aligned}
		\right.
	\end{equation}
	According to the first equation above, we have
	\begin{equation}\label{eq:general-linear-system-x}
		\mathbf{x} = -\xi \mathbf{A}^{-1} \mathbf{a} + \eta \mathbf{A}^{-1} \mathbf{b} + \mathbf{A}^{-1} \mathbf{f}.
	\end{equation}
	Taking the inner products of \eqref{eq:general-linear-system-x} with $\mathbf{b}$ and $\mathbf{a}$, respectively, and substituting the second and third equations from \eqref{eq:general-linear-system-equiv} into the resulting expression yields
	\begin{equation*}
		\left\lbrace
		\begin{aligned}
			 & (1 + \mathbf{b}^\top \mathbf{A}^{-1} \mathbf{a})	\xi - \mathbf{b}^\top \mathbf{A}^{-1} \mathbf{b} \eta = \mathbf{b}^\top \mathbf{A}^{-1} \mathbf{f},   \\
			 & (\mathbf{a}^\top \mathbf{A}^{-1} \mathbf{a}) \xi + (1 - \mathbf{a}^\top \mathbf{A}^{-1} \mathbf{b}) \eta = \mathbf{a}^\top \mathbf{A}^{-1} \mathbf{f}.
		\end{aligned}
		\right.
	\end{equation*}
	The two scalar variables $\xi, \eta$ are then obtained by solving the above $2 \times 2$ linear system, after which $\mathbf{x}$ is finally recovered from \eqref{eq:general-linear-system-x}. Throughout the entire process, we only need to solve three linear equations
	\begin{equation*}
		\bA\cdot \bx_1=\ba, \ \bA\cdot \bx_2=\bb, \ \bA\cdot \bx_3=\mathbf{f},
	\end{equation*}
	with respect to operator $\mathbf{A}$, which can be done efficiently in many cases.
\end{remark}



Inspired by the convex‐splitting idea of first‐order schemes, we introduce two genuinely second‐order, energy‐stable SGE-stabilized-BDF2 (SGE-SBDF2) and SGE-stabilized-Crank-Nicolson (SGE-SCN) schemes by choosing operator $\mathbf{M}_q = -\tau A \mathbf{M}(\Phi)$ and employing a second‐order time discretization for general gradient flows.

\begin{scheme}[SGE-SBDF2 scheme]
	\begin{equation}\label{eq:stabilization-BDF-2}
		\left\lbrace
		\begin{aligned}
			\delta_{2\tau} \Phi^{n+1} & = \mathbf{L} (\hat{\Phi}^{n+1}) \mu^{n+1}                                                                                       \\
			\mu^{n+1}                 & = \mathcal{L} \Phi^{n+1} - \tau A \mathbf{M}(\hat{\Phi}^{n+1}) d_{2t} \Phi^{n+1} + 2 \nabla f(\Phi^n) -  \nabla f(\Phi^{n-1}) .
		\end{aligned}
		\right.
	\end{equation}
\end{scheme}

\begin{scheme}[SGE-SCN scheme]
	\begin{equation}\label{eq:stabilization-CN-2}
		\left\lbrace
		\begin{aligned}
			\delta_{\tau} \Phi^{n+1} & = \mathbf{L} (\hat{\Phi}^{n+\frac{1}{2}}) \mu^{n+\frac{1}{2}}                                                                                                          \\
			\mu^{n+\frac{1}{2}}      & = \mathcal{L} \Phi^{n+\frac{1}{2}} - \tau A \mathbf{M}(\hat{\Phi}^{n+\frac{1}{2}}) d_t \Phi^{n+1}, + \tfrac{3}{2} \nabla f(\Phi^n) - \tfrac{1}{2} \nabla f(\Phi^{n-1})
		\end{aligned}
		\right.
	\end{equation}
\end{scheme}
where $\bL=\bM+\bS$, $\bM=\bM^T<0$, and $\bS=-\bS^T$.

\begin{lemma}\label{lem:difference-identity}
	In $H$, we have the following identities:
	\begin{equation*}
		\begin{aligned}
			2(d_{2t} \Phi^{n+1}, \Phi^{n+1})_H     & = d_t (\|\Phi^{n+1}\|_H^2 + \|2 \Phi^{n+1} - \Phi^n\|_H^2 ) + \|d_{tt}\Phi^n\|_H^2,               \\
			2(d_{2t} \Phi^{n+1}, d_t \Phi^{n+1})_H & = \|d_t \Phi^{n+1}\|_H^2 - \|d_t \Phi^n\|_H^2 + \|d_{tt} \Phi^n\|_H^2 + 4 \|d_t \Phi^{n+1}\|_H^2, \\
			2 (d_{tt} \Phi^n, d_{t} \Phi^{n+1})_H  & = \|d_t \Phi^{n+1}\|_H^2 - \|d_t \Phi^n\|_H^2 + \|d_{tt} \Phi^n\|_H^2,                            \\
			\|d_{2t} \Phi^{n+1}\|_H^2              & = 6 \|d_t \Phi^{n+1}\|_H^2 - 2\|d_t \Phi^n\|_H^2 + 3 \|d_{tt} \Phi^n\|_H^2.
		\end{aligned}
	\end{equation*}
\end{lemma}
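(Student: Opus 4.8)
The plan is to prove each of the four identities by straightforward expansion of the finite-difference operators $d_t$, $d_{tt}$, and $d_{2t}$ in terms of $\Phi^{n+1}, \Phi^n, \Phi^{n-1}$, and then to rewrite the resulting polynomial combinations of inner products as telescoping quantities of the form $d_t(\cdot)$ plus a squared-norm remainder. Throughout I would repeatedly use the polarization-type identity $2(a,b)_H = \|a\|_H^2 + \|b\|_H^2 - \|a-b\|_H^2$ (and its variant $2(a,b)_H = \|a+b\|_H^2 - \|a\|_H^2 - \|b\|_H^2$) to convert cross terms into squared norms. Recalling the definitions $d_t\Phi^{n+1} = \Phi^{n+1}-\Phi^n$, $d_{tt}\Phi^n = \Phi^{n+1} - 2\Phi^n + \Phi^{n-1} = d_t\Phi^{n+1} - d_t\Phi^n$, and $d_{2t}\Phi^{n+1} = 3\Phi^{n+1} - 4\Phi^n + \Phi^{n-1} = 2\,d_t\Phi^{n+1} + d_{tt}\Phi^n$, the key observation that organizes the computation is the decomposition $d_{2t}\Phi^{n+1} = 2\,d_t\Phi^{n+1} + d_{tt}\Phi^n$; substituting this into the left-hand sides reduces the first two identities to the third identity plus elementary algebra.

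For the third identity, $2(d_{tt}\Phi^n, d_t\Phi^{n+1})_H = \|d_t\Phi^{n+1}\|_H^2 - \|d_t\Phi^n\|_H^2 + \|d_{tt}\Phi^n\|_H^2$, I would write $d_{tt}\Phi^n = d_t\Phi^{n+1} - d_t\Phi^n$ and apply polarization directly: $2(d_t\Phi^{n+1} - d_t\Phi^n, d_t\Phi^{n+1})_H = 2\|d_t\Phi^{n+1}\|_H^2 - 2(d_t\Phi^n, d_t\Phi^{n+1})_H$, and then $2(d_t\Phi^n, d_t\Phi^{n+1})_H = \|d_t\Phi^{n+1}\|_H^2 + \|d_t\Phi^n\|_H^2 - \|d_{tt}\Phi^n\|_H^2$; combining gives the claim. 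For the second identity I would substitute $d_{2t}\Phi^{n+1} = 2\,d_t\Phi^{n+1} + d_{tt}\Phi^n$ to get $2(d_{2t}\Phi^{n+1}, d_t\Phi^{n+1})_H = 4\|d_t\Phi^{n+1}\|_H^2 + 2(d_{tt}\Phi^n, d_t\Phi^{n+1})_H$, then invoke the third identity. For the fourth identity, $\|d_{2t}\Phi^{n+1}\|_H^2 = \|2\,d_t\Phi^{n+1} + d_{tt}\Phi^n\|_H^2 = 4\|d_t\Phi^{n+1}\|_H^2 + 4(d_t\Phi^{n+1}, d_{tt}\Phi^n)_H + \|d_{tt}\Phi^n\|_H^2$, and again substituting the third identity for the cross term yields the stated combination.

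For the first identity, $2(d_{2t}\Phi^{n+1}, \Phi^{n+1})_H = d_t(\|\Phi^{n+1}\|_H^2 + \|2\Phi^{n+1}-\Phi^n\|_H^2) + \|d_{tt}\Phi^n\|_H^2$, I would expand $2(3\Phi^{n+1} - 4\Phi^n + \Phi^{n-1}, \Phi^{n+1})_H$ and separately expand the right-hand side. The telescoping term $d_t(\|\Phi^{n+1}\|_H^2 + \|2\Phi^{n+1}-\Phi^n\|_H^2)$ equals $\|\Phi^{n+1}\|_H^2 - \|\Phi^n\|_H^2 + \|2\Phi^{n+1}-\Phi^n\|_H^2 - \|2\Phi^n - \Phi^{n-1}\|_H^2$; expanding all four squared norms in terms of $\|\Phi^{n+1}\|_H^2$, $\|\Phi^n\|_H^2$, $\|\Phi^{n-1}\|_H^2$, $(\Phi^{n+1},\Phi^n)_H$, $(\Phi^n,\Phi^{n-1})_H$ and adding $\|d_{tt}\Phi^n\|_H^2$ should match the expansion of the left-hand side coefficient by coefficient. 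I do not anticipate a genuine obstacle here — all four are bookkeeping identities — but the one place to be careful is the first identity, where the particular groupings $\|\Phi^{n+1}\|_H^2$ and $\|2\Phi^{n+1}-\Phi^n\|_H^2$ must be verified to reproduce exactly the mixed terms with the right signs; I would double-check this by matching the coefficient of each of the six monomials $\|\Phi^{n+1}\|_H^2$, $\|\Phi^n\|_H^2$, $\|\Phi^{n-1}\|_H^2$, $(\Phi^{n+1},\Phi^n)_H$, $(\Phi^{n+1},\Phi^{n-1})_H$, $(\Phi^n,\Phi^{n-1})_H$ on both sides.
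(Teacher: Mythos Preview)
Your proposal is correct: the decomposition $d_{2t}\Phi^{n+1} = 2\,d_t\Phi^{n+1} + d_{tt}\Phi^n$ together with the polarization identity cleanly reduces identities two through four to the third, and the first identity checks out by the coefficient-matching you describe. The paper states this lemma without proof, so there is no alternative argument to compare against; your direct-verification approach is the natural one.
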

\begin{lemma}\label{lem:difference-inequality}
	The following inequalities hold true in $H$:
	\begin{equation*}
		\begin{aligned}
			d_{2t} f(\Phi^{n+1}) & \leq (d_{2t} \Phi^{n+1}, 2\nabla f(\Phi^n) - \nabla f(\Phi^{n-1}))_H + 3L \|d_t \Phi^{n+1}\|_H^2 + 3L\|d_t \Phi^n\|_H^2,                                            \\
			d_{t} f(\Phi^{n+1})  & \leq (d_t \Phi^{n+1}, \tfrac{3}{2} \nabla f(\Phi^n) - \tfrac{1}{2}\nabla f(\Phi^{n-1}))_H + \tfrac{3L}{4} \|d_t \Phi^{n+1}\|_H^2 + \tfrac{L}{4} \|d_t \Phi^n\|_H^2.
		\end{aligned}
	\end{equation*}
\end{lemma}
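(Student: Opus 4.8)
The plan is to prove each of the two inequalities by the same mechanism: a second-order Taylor-type expansion of the nonlinear functional $f$ controlled by the Lipschitz constant $L$ of $\nabla f$, combined with the algebraic identities for the finite-difference operators that have just been recorded. Since $\nabla f$ is Lipschitz with constant $L$, the standard estimate
$\bigl| f(\Psi) - f(\Phi) - (\nabla f(\Phi), \Psi - \Phi)_H \bigr| \le \tfrac{L}{2}\|\Psi - \Phi\|_H^2$
holds for all $\Phi, \Psi \in H$; this is the single analytic input, and everything else is bookkeeping.

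For the second (Crank--Nicolson) inequality I would write $d_t f(\Phi^{n+1}) = f(\Phi^{n+1}) - f(\Phi^n)$, apply the Lipschitz estimate with $\Psi = \Phi^{n+1}$, $\Phi = \Phi^n$ to get
$d_t f(\Phi^{n+1}) \le (\nabla f(\Phi^n), d_t \Phi^{n+1})_H + \tfrac{L}{2}\|d_t \Phi^{n+1}\|_H^2$,
and then replace the leading term $\nabla f(\Phi^n)$ by the desired extrapolation $\tfrac{3}{2}\nabla f(\Phi^n) - \tfrac{1}{2}\nabla f(\Phi^{n-1})$ at the cost of the correction $-\tfrac12(\nabla f(\Phi^n) - \nabla f(\Phi^{n-1}), d_t\Phi^{n+1})_H$; bounding this by Cauchy--Schwarz, the Lipschitz bound $\|\nabla f(\Phi^n) - \nabla f(\Phi^{n-1})\|_H \le L\|d_t\Phi^n\|_H$, and Young's inequality $ab \le \tfrac14 a^2 + b^2$ (or a suitably weighted version) produces terms of the form $\tfrac{L}{4}\|d_t\Phi^n\|_H^2$ and an extra multiple of $\|d_t\Phi^{n+1}\|_H^2$, which I would balance so that the total coefficient of $\|d_t\Phi^{n+1}\|_H^2$ comes out to $\tfrac{3L}{4}$. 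For the first (BDF2) inequality the same scheme applies but starting from $d_{2t}f(\Phi^{n+1}) = 3f(\Phi^{n+1}) - 4f(\Phi^n) + f(\Phi^{n-1})$, which I would split as $3\bigl(f(\Phi^{n+1}) - f(\Phi^n)\bigr) - \bigl(f(\Phi^n) - f(\Phi^{n-1})\bigr)$; applying the Lipschitz estimate to each difference, collecting the gradient terms into $d_{2t}\Phi^{n+1}$ paired with $2\nabla f(\Phi^n) - \nabla f(\Phi^{n-1})$ after adding and subtracting the appropriate extrapolation corrections, and absorbing the quadratic remainders using $\|d_{2t}\Phi^{n+1}\| \le$ a combination of $\|d_t\Phi^{n+1}\|$ and $\|d_t\Phi^n\|$ (via $d_{2t}\Phi^{n+1} = 3 d_t\Phi^{n+1} - d_t\Phi^n$ or the last identity of Lemma \ref{lem:difference-identity}) yields the stated $3L\|d_t\Phi^{n+1}\|_H^2 + 3L\|d_t\Phi^n\|_H^2$ after choosing the Young's-inequality weights.

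The routine part is the arithmetic of tracking constants; the one place that requires care — and what I expect to be the main obstacle — is choosing the Young's-inequality splitting parameters so that the coefficients land exactly on $3L, 3L$ (resp. $\tfrac{3L}{4}, \tfrac{L}{4}$) rather than merely on \emph{some} constants times $L$. In particular one must decide how to apportion the cross term between $\|d_t\Phi^{n+1}\|_H^2$ and $\|d_t\Phi^n\|_H^2$; an asymmetric weight is needed for the Crank--Nicolson case (hence the asymmetric $\tfrac{3L}{4}$ vs.\ $\tfrac{L}{4}$), while the BDF2 case tolerates the symmetric choice. These particular constants are exactly what is consumed later when they are matched against the stabilization term $-\tau A \mathbf{M}(\hat\Phi)$ in the energy estimates for schemes \eqref{eq:stabilization-BDF-2} and \eqref{eq:stabilization-CN-2}, so it is worth stating them sharply; I would carry the weights symbolically through the computation and fix them at the end.
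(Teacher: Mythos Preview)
Your proposal is correct and matches the paper's proof in all essentials: both rely on the Taylor/Lipschitz remainder bound $|f(\Psi)-f(\Phi)-(\nabla f(\Phi),\Psi-\Phi)_H|\le \tfrac{L}{2}\|\Psi-\Phi\|_H^2$, the Lipschitz bound on $\nabla f$ for the extrapolation correction, and Cauchy--Schwarz plus Young for the cross terms, together with the decomposition $d_{2t}\Phi^{n+1}=3d_t\Phi^{n+1}-d_t\Phi^n$. The only organisational difference is direction: the paper starts from the inner product $(d_{2t}\Phi^{n+1},2\nabla f(\Phi^n)-\nabla f(\Phi^{n-1}))_H$ and bounds it from below by $d_{2t}f(\Phi^{n+1})$ minus the quadratic terms, whereas you start from $d_{2t}f(\Phi^{n+1})$ and bound it from above; the computations are identical. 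One small correction: in both cases the \emph{symmetric} Young split $ab\le\tfrac12 a^2+\tfrac12 b^2$ already delivers the stated constants --- for CN the apparent asymmetry $\tfrac{3L}{4}$ vs.\ $\tfrac{L}{4}$ comes from the $\tfrac{L}{2}\|d_t\Phi^{n+1}\|_H^2$ Taylor remainder, not from an asymmetric Young weight.
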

\begin{proof}
	We only prove the first inequality as an example. The other can be proved analogously. Invoking the Taylor's expansion, we have
	\begin{equation*}
		\begin{aligned}
			(\nabla f(\Phi^n), d_t \Phi^{n+1})_H     & = d_t f(\Phi^{n+1})+ \tfrac{1}{2} (d_t \Phi^{n+1} , \nabla^2 f(\Phi^n + \xi d_t \Phi^{n+1})  \cdot d_t \Phi^{n+1} )_H, \\
			(\nabla f(\Phi^{n+1}), d_t \Phi^{n+1})_H & = d_t f(\Phi^{n+1}) - \tfrac{1}{2} (d_t \Phi^{n+1},  \nabla^2 f(\Phi^n + \xi d_t \Phi^{n+1}) \cdot d_t \Phi^{n+1})_H.
		\end{aligned}
	\end{equation*}
	It follows that
	\begin{equation*}
		\begin{aligned}
			 & \left(d_{2 t}\Phi^{n+1}, 2\nabla f(\Phi^n) -\nabla f(\Phi^{n-1})\right)_H                                     \\
			 & \ = 3(d_t \Phi^{n+1}, \nabla f(\Phi^n))_H + 3(d_t \Phi^{n+1}, d_t \nabla f(\Phi^n))_H                         \\
			 & \ \ - (d_t \Phi^n, \nabla f(\Phi^n))_H - (d_t \Phi^n, d_t \nabla f(\Phi^n))_H                                 \\
			 & \ \geq 3 d_t f(\Phi^{n+1}) - \tfrac{3 L}{2} \|d_t \Phi^{n+1}\|_H^2 - 3L \|d_t \Phi^{n+1}\|_H \|d_t \Phi^n\|_H \\
			 & \ \ - d_t f(\Phi^n) - \tfrac{L}{2} \|d_t \Phi^n\|_H^2 - L \|d_t \Phi^n\|_H^2                                  \\
			 & \ \geq d_{2t} f(\Phi^{n+1}) - 3L \|d_t \Phi^{n+1}\|_H^2 - 3L \|d_t \Phi^n\|_H^2.
		\end{aligned}
	\end{equation*}
	The proof is thus completed.
\end{proof}

\begin{lemma}\label{lem:operator}
	We assume that $\mathbf{S}(\Phi)$ is bounded on $H$, and that $-\mathbf{M}(\Phi)$ is symmetric, positive definte, and coercive on $H$. I.e.,  there exists positive constants $\alpha_\mathbf{M}$ and $\beta_\mathbf{S}$ so that for any $\Psi_1, \Psi_2, \Psi \in H$,
	\begin{equation}\label{eq:assum-operators}
		(\mathbf{S}(\Phi)\Psi_1, \Psi_2)_H \leq \beta_\mathbf{S} \|\Psi_1\|_H\|\Psi_2\|_H, \quad (-\mathbf{M}(\Phi)\Psi, \Psi)_H \geq \alpha_\mathbf{M} \|\Psi\|_H^2.
	\end{equation}
	It then follows that
	\begin{equation*}
		( \mathbf{L}(\Phi) \Psi_1, \Psi_2 )_H \leq (1 + \tfrac{\beta_\mathbf{S}}{\alpha_\mathbf{M}}) (\Psi_1, -\mathbf{M}(\Phi) \Psi_1)_H^{1/2}(\Psi_2, -\mathbf{M}(\Phi) \Psi_2)_H^{1/2}.
	\end{equation*}
\end{lemma}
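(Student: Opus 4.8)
The plan is to decompose $\mathbf{L}(\Phi) = \mathbf{M}(\Phi) + \mathbf{S}(\Phi)$ and estimate each piece separately using the two hypotheses in \eqref{eq:assum-operators}. The key observation is that $-\mathbf{M}(\Phi)$ is symmetric positive definite, so it induces an inner product $\langle \Psi_1, \Psi_2\rangle_{\mathbf{M}} := (\Psi_1, -\mathbf{M}(\Phi)\Psi_2)_H$ on $H$, and the associated Cauchy--Schwarz inequality immediately yields
\begin{equation*}
	(\mathbf{M}(\Phi)\Psi_1, \Psi_2)_H = -\langle \Psi_1, \Psi_2\rangle_{\mathbf{M}} \leq \langle \Psi_1, \Psi_1\rangle_{\mathbf{M}}^{1/2}\langle \Psi_2, \Psi_2\rangle_{\mathbf{M}}^{1/2} = (\Psi_1, -\mathbf{M}(\Phi)\Psi_1)_H^{1/2}(\Psi_2, -\mathbf{M}(\Phi)\Psi_2)_H^{1/2}.
\end{equation*}
This handles the symmetric part with constant $1$.

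For the skew part, I would start from the boundedness assumption $(\mathbf{S}(\Phi)\Psi_1, \Psi_2)_H \leq \beta_{\mathbf{S}}\|\Psi_1\|_H\|\Psi_2\|_H$ and then convert the $H$-norms into $\mathbf{M}$-weighted quantities via the coercivity bound $\alpha_{\mathbf{M}}\|\Psi\|_H^2 \leq (\Psi, -\mathbf{M}(\Phi)\Psi)_H$, i.e. $\|\Psi\|_H \leq \alpha_{\mathbf{M}}^{-1/2}(\Psi, -\mathbf{M}(\Phi)\Psi)_H^{1/2}$. Applying this to both $\Psi_1$ and $\Psi_2$ gives
\begin{equation*}
	(\mathbf{S}(\Phi)\Psi_1, \Psi_2)_H \leq \tfrac{\beta_{\mathbf{S}}}{\alpha_{\mathbf{M}}} (\Psi_1, -\mathbf{M}(\Phi)\Psi_1)_H^{1/2}(\Psi_2, -\mathbf{M}(\Phi)\Psi_2)_H^{1/2}.
\end{equation*}

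Adding the two estimates for $(\mathbf{L}(\Phi)\Psi_1, \Psi_2)_H = (\mathbf{M}(\Phi)\Psi_1, \Psi_2)_H + (\mathbf{S}(\Phi)\Psi_1, \Psi_2)_H$ produces exactly the factor $(1 + \beta_{\mathbf{S}}/\alpha_{\mathbf{M}})$ in front of the product of $\mathbf{M}$-weighted seminorms, which is the claimed bound. I do not anticipate any serious obstacle here: the only mild point to be careful about is that $-\mathbf{M}(\Phi)$ must genuinely be a (semi-)inner product for the Cauchy--Schwarz step, which is guaranteed by the symmetry and positive-definiteness hypothesis; coercivity is only needed to control the $H$-norms appearing through $\mathbf{S}$. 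If $\mathbf{M}(\Phi)$ were merely negative semi-definite one would need to restrict to its range or invoke a limiting argument, but under the stated coercivity assumption this subtlety does not arise.
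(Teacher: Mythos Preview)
Your proposal is correct and follows essentially the same route as the paper: decompose $\mathbf{L}=\mathbf{M}+\mathbf{S}$, apply the Cauchy--Schwarz inequality in the $-\mathbf{M}$-weighted inner product to bound the symmetric part, use the boundedness of $\mathbf{S}$ together with coercivity $\|\Psi\|_H\le \alpha_{\mathbf{M}}^{-1/2}(\Psi,-\mathbf{M}\Psi)_H^{1/2}$ for the skew part, and add. There is nothing to add or correct.
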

\begin{proof}
	By invoking the Cauchy–Schwarz inequality together with hypotheses \eqref{eq:assum-operators}, we obtain
	\begin{equation*}
		\begin{aligned}
			 & ( \mathbf{L}(\Phi) \Psi_1, \Psi_2 )_H  = (\mathbf{M}(\Phi) \Psi_1, \Psi_2)_H + (\mathbf{S}(\Phi) \Psi_1, \Psi_2)_H                             \\
			 & \  \leq (\Psi_1, -\mathbf{M}(\Phi) \Psi_1)_H^{1/2}(\Psi_2, -\mathbf{M}(\Phi) \Psi_2)_H^{1/2} + \beta_\mathbf{S} \|\Psi_1\|_H \|\Psi_2\|_H      \\
			 & \ \leq (1 + \tfrac{\beta_\mathbf{S}}{\alpha_\mathbf{M}}) (\Psi_1, -\mathbf{M}(\Phi) \Psi_1)_H^{1/2}(\Psi_2, -\mathbf{M}(\Phi) \Psi_2)_H^{1/2}.
		\end{aligned}
	\end{equation*}
\end{proof}

Using the lemmas, we establish  following stability results for the SGE-SCN and SGE-SBDF2 schemes, respectively.
\begin{theorem}
	Provided that the stabilization parameter satisfies $A \geq \tfrac{L^2}{4}(1 + \tfrac{\beta_\mathbf{S}}{\alpha_\mathbf{M}})^2$, the SGE-SCN scheme is unconditionally energy stable in the sense that
	\begin{equation*}
		d_t \tilde{E}_{CN}(\Phi^{n+1}, \Phi^n) \leq	- \left( 2(1 + \tfrac{\beta_\mathbf{S}}{\alpha_\mathbf{M}})^{-1}\sqrt{A} - L\right)\|d_t \Phi^{n+1}\|_H^2.
	\end{equation*}
	Here,
	\begin{equation*}
		\tilde{E}_{CN}(\Phi^{n+1}, \Phi^n) = \tfrac{1}{2}\|\mathcal{L}^{\frac{1}{2}} \Phi^n\|_H^2 + f(\Phi^n) + \tfrac{L}{4} \|d_t \Phi^{n+1}\|_H^2
	\end{equation*}
\end{theorem}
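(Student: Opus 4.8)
The plan is to run the classical energy argument for a stabilized Crank--Nicolson scheme: test the evolution equation against the discrete chemical potential $\mu^{n+\frac12}$, kill the skew part of $\mathbf{L}$, telescope the quadratic piece of the energy, control the explicit nonlinearity by Lemma~\ref{lem:difference-inequality}, and finally balance the residual $L\|d_t\Phi^{n+1}\|_H^2$ against the stabilization term and the built-in dissipation via Lemma~\ref{lem:operator}. The identities of Lemma~\ref{lem:difference-identity} are the BDF2 analogue and are not needed here.

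First I would take the $H$-inner product of the first line of \eqref{eq:stabilization-CN-2} with $\tau\mu^{n+\frac12}$. Writing $\mathbf{L}(\hat{\Phi}^{n+\frac12})=\mathbf{M}(\hat{\Phi}^{n+\frac12})+\mathbf{S}(\hat{\Phi}^{n+\frac12})$ and using $(\mathbf{S}\psi,\psi)_H=0$, this gives $(d_t\Phi^{n+1},\mu^{n+\frac12})_H=\tau(\mathbf{M}(\hat{\Phi}^{n+\frac12})\mu^{n+\frac12},\mu^{n+\frac12})_H=-\tau(\mu^{n+\frac12},-\mathbf{M}(\hat{\Phi}^{n+\frac12})\mu^{n+\frac12})_H$. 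Substituting the definition of $\mu^{n+\frac12}$ into the left-hand side: the $\mathcal{L}\Phi^{n+\frac12}$ term telescopes via $2(\Phi^{n+1}-\Phi^n,\mathcal{L}\tfrac{\Phi^{n+1}+\Phi^n}{2})_H=\|\mathcal{L}^{1/2}\Phi^{n+1}\|_H^2-\|\mathcal{L}^{1/2}\Phi^n\|_H^2$; the stabilization term $-\tau A\mathbf{M}(\hat{\Phi}^{n+\frac12})d_t\Phi^{n+1}$ contributes $+\tau A(d_t\Phi^{n+1},-\mathbf{M}(\hat{\Phi}^{n+\frac12})d_t\Phi^{n+1})_H$; and the extrapolated nonlinearity is bounded below by $d_tf(\Phi^{n+1})-\tfrac{3L}{4}\|d_t\Phi^{n+1}\|_H^2-\tfrac{L}{4}\|d_t\Phi^n\|_H^2$ through the second inequality of Lemma~\ref{lem:difference-inequality}. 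Using $-\tfrac{3L}{4}\|d_t\Phi^{n+1}\|_H^2-\tfrac{L}{4}\|d_t\Phi^n\|_H^2=d_t\!\big(\tfrac{L}{4}\|d_t\Phi^{n+1}\|_H^2\big)-L\|d_t\Phi^{n+1}\|_H^2$ to assemble the modified energy, one arrives at
\[
d_t\tilde{E}_{CN}(\Phi^{n+1},\Phi^n)\le L\|d_t\Phi^{n+1}\|_H^2-\tau A\big(d_t\Phi^{n+1},-\mathbf{M}d_t\Phi^{n+1}\big)_H-\tau\big(\mu^{n+\frac12},-\mathbf{M}\mu^{n+\frac12}\big)_H .
\]

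The only genuinely delicate point — and the step I expect to be the crux — is that one must \emph{not} discard the natural dissipation $-\tau(\mu^{n+\frac12},-\mathbf{M}\mu^{n+\frac12})_H$, but instead couple it with the stabilization term to absorb $L\|d_t\Phi^{n+1}\|_H^2$. Since the scheme gives $d_t\Phi^{n+1}=\tau\mathbf{L}(\hat{\Phi}^{n+\frac12})\mu^{n+\frac12}$, we have $\|d_t\Phi^{n+1}\|_H^2=\tau(\mathbf{L}(\hat{\Phi}^{n+\frac12})\mu^{n+\frac12},d_t\Phi^{n+1})_H$, and Lemma~\ref{lem:operator} yields $\|d_t\Phi^{n+1}\|_H^2\le\tau\big(1+\tfrac{\beta_\mathbf{S}}{\alpha_\mathbf{M}}\big)(\mu^{n+\frac12},-\mathbf{M}\mu^{n+\frac12})_H^{1/2}(d_t\Phi^{n+1},-\mathbf{M}d_t\Phi^{n+1})_H^{1/2}$. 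Applying AM--GM with weight $\sqrt{A}$ to the product on the right converts this into $\tfrac{2\sqrt{A}}{1+\beta_\mathbf{S}/\alpha_\mathbf{M}}\|d_t\Phi^{n+1}\|_H^2\le\tau A(d_t\Phi^{n+1},-\mathbf{M}d_t\Phi^{n+1})_H+\tau(\mu^{n+\frac12},-\mathbf{M}\mu^{n+\frac12})_H$. Feeding this into the displayed inequality gives exactly $d_t\tilde{E}_{CN}(\Phi^{n+1},\Phi^n)\le-\big(2(1+\tfrac{\beta_\mathbf{S}}{\alpha_\mathbf{M}})^{-1}\sqrt{A}-L\big)\|d_t\Phi^{n+1}\|_H^2$, and the hypothesis $A\ge\tfrac{L^2}{4}(1+\tfrac{\beta_\mathbf{S}}{\alpha_\mathbf{M}})^2$ is precisely what renders the coefficient nonnegative, so the modified energy is nonincreasing. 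In short, the arithmetic is routine; the insight to emphasize is retaining the intrinsic dissipation and pairing it with the stabilization term through Lemma~\ref{lem:operator} and the $\sqrt{A}$-weighted Young inequality, which is what dictates the stated threshold on $A$.
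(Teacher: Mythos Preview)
Your proposal is correct and follows essentially the same route as the paper's proof: test \eqref{eq:stabilization-CN-2} with $\tau\mu^{n+\frac12}$ and $d_t\Phi^{n+1}$, telescope the quadratic piece, invoke Lemma~\ref{lem:difference-inequality} for the extrapolated nonlinearity, and then pair the stabilization with the intrinsic dissipation through Lemma~\ref{lem:operator} and the $\sqrt{A}$-weighted Young inequality to produce the stated bound. Your remark that Lemma~\ref{lem:difference-identity} is not actually needed for the CN case is also accurate; the paper cites it but the relevant telescoping is the elementary midpoint identity you wrote down.
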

\begin{proof}
	Taking the inner product of the first equation of \eqref{eq:stabilization-CN-2} with $\tau\mu^{n+\frac{1}{2}}$ and the second with $d_t \Phi^{n+1}$, we obtain
	\begin{equation}\label{eq:proof-stabcn2-1}
		( d_t \Phi^{n+1}, \mu^{n+\frac{1}{2}}  )_H = \tau (\mu^{n+\frac{1}{2}}, \mathbf{M}(\hat{\Phi}^{n+\frac{1}{2}}) \mu^{n+\frac{1}{2}}),
	\end{equation}
	\begin{equation}\label{eq:proof-stabcn2-2}
		\begin{aligned}
			( d_t \Phi^{n+1}, \mu^{n+\frac{1}{2}}  )_H & = \tfrac{1}{2} d_t \|\mathcal{L}^{\frac{1}{2}} \Phi^{n+1}\|_H^2 - \tau A (d_t \Phi^{n+1}, \mathbf{M}(\hat{\Phi}^{n+\frac{1}{2}}) d_t \Phi^{n+1}) \\
			                                           & \ + (\tfrac{3}{2}\nabla f(\Phi^n) - \tfrac{1}{2} \nabla f(\Phi^{n-1}), d_t \Phi^{n+1}).
		\end{aligned}
	\end{equation}
	Combining \eqref{eq:proof-stabcn2-1}, \eqref{eq:proof-stabcn2-2} and Lemmas \ref{lem:difference-identity}, \ref{lem:difference-inequality}, we obtain
	\begin{equation} \label{eq:proof-stabcn2-3}
		\begin{aligned}
			\delta_ t E^{n+1} & \leq -\tfrac{L}{4} \|d_t \Phi^{n+1}\|_H^2 + \tfrac{L}{4} \|d_t \Phi^n\|_H^2  + L\|d_t \Phi^{n+1}\|_H^2                                                                       \\
			                  & \ + \tau A (d_t \Phi^{n+1}, \mathbf{M}(\hat{\Phi}^{n+\frac{1}{2}}) d_t \Phi^{n+1}) + \tau (\mu^{n+\frac{1}{2}}, \mathbf{M}(\hat{\Phi}^{n+\frac{1}{2}}) \mu^{n+\frac{1}{2}}).
		\end{aligned}
	\end{equation}
	Taking the inner product of the first equation of \eqref{eq:stabilization-CN-2} with $2(1 + \tfrac{\beta_\mathbf{S}}{\alpha_\mathbf{M}})^{-1}\sqrt{A} \tau d_t \Phi^{n+1}$ and invoking Lemma \ref{lem:operator}, we arrive at
	\begin{equation*}
		\begin{aligned}
			 & 2(1 + \tfrac{\beta_\mathbf{S}}{\alpha_\mathbf{M}})^{-1}\sqrt{A} \|d_t \Phi^{n+1}\|_H^2                                                                                                             \\
			 & \ \leq 2\tau \sqrt{A}   (\mu^{n+\frac{1}{2}}, -\mathbf{M}(\hat{\Phi}^{n+\frac{1}{2}}) \mu^{n+\frac{1}{2}})_H^{1/2}(d_t \Phi^{n+1}, -\mathbf{M}(\hat{\Phi}^{n+\frac{1}{2}}) d_t \Phi^{n+1})_H^{1/2} \\
			 & \ \leq  - \tau A (d_t \Phi^{n+1}, \mathbf{M}(\hat{\Phi}^{n+\frac{1}{2}}) d_t \Phi^{n+1})_H - \tau (\mu^{n+\frac{1}{2}}, \mathbf{M}(\hat{\Phi}^{n+\frac{1}{2}}) \mu^{n+\frac{1}{2}})_H.
		\end{aligned}
	\end{equation*}
	Summing this result with \eqref{eq:proof-stabcn2-3} produces
	\begin{equation}
		d_t \tilde{E}_{CN}(\Phi^{n+1}, \Phi^n) \leq - \left( 2(1 + \tfrac{\beta_\mathbf{S}}{\alpha_\mathbf{M}})^{-1}\sqrt{A} - L\right)\|d_t \Phi^{n+1}\|_H^2.
	\end{equation}
	The energy stability of \eqref{eq:stabilization-CN-2} follows immediately when $A \geq \tfrac{L^2}{4}(1 + \tfrac{\beta_\mathbf{S}}{\alpha_\mathbf{M}})^2$.
\end{proof}
\begin{theorem}\label{thm:stability-bdf2}
	Provided that the stabilization parameter satisfies $A \geq \tfrac{9L^2}{8}(1 + \tfrac{\beta_\mathbf{S}}{\alpha_\mathbf{M}})^2$, The SGE-SBDF2 scheme is unconditionally energy stable in the sense that
	\begin{equation*}
		\begin{aligned}
			d_t \tilde{E}_{BDF2}(\Phi^{n+1}, \Phi^n) & \leq	-(2\sqrt{2A} (1 + \tfrac{\beta_\mathbf{S}}{\alpha_\mathbf{M}})^{-1} - 3L)\|d_t \Phi^{n+1}\|^2                                                               \\
			                                         & \ - \tfrac{3}{2}\sqrt{2A} (1+ \tfrac{\beta_\mathbf{S}}{\alpha_\mathbf{M}})^{-1}\|d_{tt} \Phi^n\|^2 - \tfrac{1}{4} \|\mathcal{L}^{\frac{1}{2}} d_{tt} \Phi^n\|^2.
		\end{aligned}
	\end{equation*}
	Here,
	\begin{equation*}
		\begin{aligned}
			 & \tilde{E}_{BDF2}(\Phi^{n+1}, \Phi^n) = \tfrac{1}{4} \|\mathcal{L}^{\frac{1}{2}}\Phi^{n+1}\|^2 + \tfrac{1}{4} \|\mathcal{L}^{\frac{1}{2}}(2\Phi^{n+1} - \Phi^n)\|^2 \\
			 & \ + \tfrac{3}{2} f(\Phi^{n+1}) - \tfrac{1}{2} f(\Phi^n) + (\tfrac{3L}{2} + \sqrt{2A} (1 + \tfrac{\beta_\mathbf{S}}{\alpha_\mathbf{M}})^{-1}) \|d_t \Phi^{n+1}\|^2.
		\end{aligned}
	\end{equation*}
\end{theorem}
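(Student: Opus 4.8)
The plan is to follow the same template as in the SGE-SCN proof: test the two lines of \eqref{eq:stabilization-BDF-2} against carefully chosen multiples, use the skew-symmetry $\mathbf{S}=-\mathbf{S}^\top$ to discard the reversible part, convert the $\mathcal{L}$-pairing and the $f$-pairing into telescoping $d_t$-differences via Lemmas~\ref{lem:difference-identity} and~\ref{lem:difference-inequality}, and finally absorb the leftover $\mathbf{M}$-weighted quadratic forms by means of Lemma~\ref{lem:operator} with an optimally weighted Young inequality. All constants are forced, so the only real work is keeping track of the residual quadratic terms and reassembling them via Lemma~\ref{lem:difference-identity}.

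First I would pair the first equation of \eqref{eq:stabilization-BDF-2} (using $\delta_{2\tau}\Phi^{n+1}=\tfrac{1}{2\tau}d_{2t}\Phi^{n+1}$) with $\tau\mu^{n+1}$, and the second equation with $d_{2t}\Phi^{n+1}$. Since $(\mathbf{L}(\hat\Phi^{n+1})\mu^{n+1},\mu^{n+1})_H=(\mathbf{M}(\hat\Phi^{n+1})\mu^{n+1},\mu^{n+1})_H$, eliminating $(d_{2t}\Phi^{n+1},\mu^{n+1})_H$ between the two relations yields
\begin{equation*}
\begin{aligned}
&(d_{2t}\Phi^{n+1},\mathcal{L}\Phi^{n+1})_H + (d_{2t}\Phi^{n+1},2\nabla f(\Phi^n)-\nabla f(\Phi^{n-1}))_H\\
&\quad = 2\tau(\mathbf{M}(\hat\Phi^{n+1})\mu^{n+1},\mu^{n+1})_H + \tau A(d_{2t}\Phi^{n+1},\mathbf{M}(\hat\Phi^{n+1})d_{2t}\Phi^{n+1})_H,
\end{aligned}
\end{equation*}
whose right-hand side is nonpositive. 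Applying the first identity of Lemma~\ref{lem:difference-identity} to the sequence $\mathcal{L}^{1/2}\Phi^{n}$ rewrites $(d_{2t}\Phi^{n+1},\mathcal{L}\Phi^{n+1})_H$ as $\tfrac12 d_t\big(\|\mathcal{L}^{1/2}\Phi^{n+1}\|_H^2+\|\mathcal{L}^{1/2}(2\Phi^{n+1}-\Phi^n)\|_H^2\big)+\tfrac12\|\mathcal{L}^{1/2}d_{tt}\Phi^n\|_H^2$, while the first inequality of Lemma~\ref{lem:difference-inequality} bounds the nonlinear pairing below by $d_{2t}f(\Phi^{n+1})-3L\|d_t\Phi^{n+1}\|_H^2-3L\|d_t\Phi^n\|_H^2$, and $d_{2t}f(\Phi^{n+1})=2d_t(\tfrac32 f(\Phi^{n+1})-\tfrac12 f(\Phi^n))$. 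Dividing by $2$ and rearranging isolates, on the left, the $\mathcal{L}$- and $f$-parts of $d_t\tilde E_{BDF2}$, and on the right the residuals $\tfrac32L(\|d_t\Phi^{n+1}\|_H^2+\|d_t\Phi^n\|_H^2)-\tfrac14\|\mathcal{L}^{1/2}d_{tt}\Phi^n\|_H^2$ plus the two nonpositive $\mathbf{M}$-weighted forms.

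The stabilization step comes next. From $d_{2t}\Phi^{n+1}=2\tau\mathbf{L}(\hat\Phi^{n+1})\mu^{n+1}$ I take the inner product with $c\,d_{2t}\Phi^{n+1}$, where $c=\tfrac{\sqrt{2A}}{2}(1+\tfrac{\beta_{\mathbf{S}}}{\alpha_{\mathbf{M}}})^{-1}$, then invoke Lemma~\ref{lem:operator} and Young's inequality with weight chosen so that the bound is exactly
\begin{equation*}
c\,\|d_{2t}\Phi^{n+1}\|_H^2 \;\leq\; \tau(-\mathbf{M}(\hat\Phi^{n+1})\mu^{n+1},\mu^{n+1})_H + \tfrac{\tau A}{2}(-\mathbf{M}(\hat\Phi^{n+1})d_{2t}\Phi^{n+1},d_{2t}\Phi^{n+1})_H,
\end{equation*}
which absorbs the two $\mathbf{M}$-weighted terms at the cost of $-c\|d_{2t}\Phi^{n+1}\|_H^2$. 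Adding $d_t\big[(\tfrac{3L}{2}+\sqrt{2A}(1+\tfrac{\beta_{\mathbf{S}}}{\alpha_{\mathbf{M}}})^{-1})\|d_t\Phi^{n+1}\|_H^2\big]=(\tfrac{3L}{2}+2c)(\|d_t\Phi^{n+1}\|_H^2-\|d_t\Phi^n\|_H^2)$ to both sides completes $\tilde E_{BDF2}$ on the left; on the right the $\|d_t\Phi^n\|_H^2$ terms cancel down to $-2c\|d_t\Phi^n\|_H^2$ and the coefficient of $\|d_t\Phi^{n+1}\|_H^2$ becomes $3L+2c$. Finally, substituting the last identity of Lemma~\ref{lem:difference-identity}, $\|d_{2t}\Phi^{n+1}\|_H^2=6\|d_t\Phi^{n+1}\|_H^2-2\|d_t\Phi^n\|_H^2+3\|d_{tt}\Phi^n\|_H^2$, into the penalty $-c\|d_{2t}\Phi^{n+1}\|_H^2$ cancels the $\|d_t\Phi^n\|_H^2$ contribution exactly and collapses the right-hand side (with equality) to $-(4c-3L)\|d_t\Phi^{n+1}\|_H^2-3c\|d_{tt}\Phi^n\|_H^2-\tfrac14\|\mathcal{L}^{1/2}d_{tt}\Phi^n\|_H^2$, which is precisely the claimed bound since $4c=2\sqrt{2A}(1+\tfrac{\beta_{\mathbf{S}}}{\alpha_{\mathbf{M}}})^{-1}$ and $3c=\tfrac32\sqrt{2A}(1+\tfrac{\beta_{\mathbf{S}}}{\alpha_{\mathbf{M}}})^{-1}$; unconditional stability follows because $4c-3L\geq0$ is equivalent to $A\geq\tfrac{9L^2}{8}(1+\tfrac{\beta_{\mathbf{S}}}{\alpha_{\mathbf{M}}})^2$.

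I expect the main obstacle to be the bookkeeping: picking the multipliers $\tau\mu^{n+1}$, $d_{2t}\Phi^{n+1}$, $c\,d_{2t}\Phi^{n+1}$ and the Young weight so that the $\mathbf{M}$-weighted forms cancel \emph{exactly} and the leftover $\|d_t\Phi^{n+1}\|_H^2$, $\|d_t\Phi^n\|_H^2$, $\|d_{2t}\Phi^{n+1}\|_H^2$, $\|d_{tt}\Phi^n\|_H^2$ terms recombine through Lemma~\ref{lem:difference-identity} into the stated form. A subtler point is that Lemma~\ref{lem:operator} only controls $\mathbf{L}$ through the $(-\mathbf{M})$-weighted seminorm, so the coercivity constant $\alpha_{\mathbf{M}}>0$ is indispensable for the absorption, and it must be tracked consistently through $c$; the BDF2 case is more delicate than SGE-SCN only in that one must apply the operator lemma to $d_{2t}\Phi^{n+1}$ rather than to $d_t\Phi^{n+1}$, which is why the final inequality carries the extra $\|d_{tt}\Phi^n\|_H^2$ and $\|\mathcal{L}^{1/2}d_{tt}\Phi^n\|_H^2$ penalties.
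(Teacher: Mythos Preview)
Your proposal is correct and follows essentially the same approach as the paper: pairing the two lines of the scheme with $\mu^{n+1}$ and $d_{2t}\Phi^{n+1}$ to produce the basic identity, applying Lemmas~\ref{lem:difference-identity}--\ref{lem:difference-inequality}, and then using Lemma~\ref{lem:operator} together with Young's inequality on $(\mathbf{L}\mu^{n+1},d_{2t}\Phi^{n+1})_H$ to absorb the $\mathbf{M}$-weighted forms by $-c\|d_{2t}\Phi^{n+1}\|_H^2$, which is finally expanded via the last identity of Lemma~\ref{lem:difference-identity}. The only cosmetic difference is that you divide by $2$ early and carry $c=\tfrac{\sqrt{2A}}{2}(1+\tfrac{\beta_{\mathbf{S}}}{\alpha_{\mathbf{M}}})^{-1}$ throughout, whereas the paper keeps the factor of $2$ until the end; the bookkeeping and the resulting constants are identical.
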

\begin{proof}
	Taking the inner product of the first and second equations in \eqref{eq:stabilization-BDF-2} with $\mu^{n+1}$ and $\delta_{2\tau} \Phi^{n+1}$, respectively, subsequently comparing the resulting identities, one obtains
	\begin{equation}\label{eq:proof_stabbdf22-1}
		\begin{aligned}
			 & (\mathcal{L} \Phi^{n+1}, d_{2t} \Phi^{n+1})_H + (2\nabla f(\Phi^n) - \nabla f(\Phi^{n-1}) , d_{2t} \Phi^{n+1})_H                               \\
			 & \ = 2\tau (\mu^{n+1}, \mathbf{M}(\hat{\Phi}^{n+1}) \mu^{n+1})_H + \tau A (d_{2t} \Phi^{n+1}, \mathbf{M}(\hat{\Phi}^{n+1}) d_{2t}\Phi^{n+1})_H. \\
		\end{aligned}
	\end{equation}
	Taking the inner product of the first equation in \eqref{eq:stabilization-BDF-2} with $2\tau \sqrt{2A} (1 + \tfrac{\beta_S}{\alpha_M})^{-1}$ and exploiting Lemma \ref{lem:operator}, results in
	\begin{equation}\label{eq:proof_stabbdf22-2}
		\begin{aligned}
			 & \sqrt{2A} (1 + \tfrac{\beta_S}{\alpha_M})^{-1} \|d_{2t} \Phi^{n+1}\|^2                                                                                                    \\
			 & \ \leq 2\tau \sqrt{2A} (\mu^{n+1}, -\mathbf{M}(\hat{\Phi}^{n+1})\mu^{n+1})^{\frac{1}{2}} (d_{2t} \Phi^{n+1}, -\mathbf{M}(\hat{\Phi}^{n+1})d_{2t}\Phi^{n+1})^{\frac{1}{2}} \\
			 & \ \leq -\tau A (d_{2t} \Phi^{n+1}, \mathbf{M}(\hat{\Phi}^{n+1})d_{2t}\Phi^{n+1})_H - 2\tau (\mu^{n+1}, \mathbf{M}(\hat{\Phi}^{n+1})\mu^{n+1}).
		\end{aligned}
	\end{equation}
	Notice that we have identity
	\begin{equation}
		\begin{aligned}
			 & (\mathcal{L}\Phi^{n+1}, d_{2t}\Phi^{n+1}) = \tfrac{1}{2} \|\mathcal{L}^{\frac{1}{2}}\Phi^{n+1}\|^2 + \tfrac{1}{2} \|\mathcal{L}^{\frac{1}{2}}(2\Phi^{n+1} - \Phi^n)\|^2                           \\
			 & \quad - \tfrac{1}{2} \|\mathcal{L}^{\frac{1}{2}}\Phi^{n}\|^2 - \tfrac{1}{2} \|\mathcal{L}^{\frac{1}{2}}(2\Phi^{n} - \Phi^{n-1})\|^2 + \tfrac{1}{2} \|\mathcal{L}^{\frac{1}{2}} d_{tt} \Phi^n\|^2.
		\end{aligned}
	\end{equation}
	Combing Lemmas \ref{lem:difference-identity}, \ref{lem:difference-inequality} with the identity above, we obtain
	\begin{equation}
		\begin{aligned}
			 & \tfrac{1}{2} \|\mathcal{L}^{\frac{1}{2}}\Phi^{n+1}\|^2 + \tfrac{1}{2} \|\mathcal{L}^{\frac{1}{2}}(2\Phi^{n+1} - \Phi^n)\|^2  + 3 f(\Phi^{n+1}) - f(\Phi^n)                                                                                               \\
			 & \ - \tfrac{1}{2} \|\mathcal{L}^{\frac{1}{2}}\Phi^{n}\|^2 - \tfrac{1}{2} \|\mathcal{L}^{\frac{1}{2}}(2\Phi^{n} - \Phi^{n-1})\|^2  - 3 f(\Phi^{n}) + f(\Phi^{n-1})                                                                                         \\
			 & \ + (3L +2 \sqrt{2A} (1 + \tfrac{\beta_\mathbf{S}}{\alpha_\mathbf{M}})^{-1}) \|d_t \Phi^{n+1}\|^2 - (3L +2 \sqrt{2A} (1 + \tfrac{\beta_\mathbf{S}}{\alpha_\mathbf{M}})^{-1}) \|d_t \Phi^n\|^2                                                            \\
			 & \ \leq -(4\sqrt{2A} (1 + \tfrac{\beta_\mathbf{S}}{\alpha_\mathbf{M}})^{-1} - 6L)\|d_t \Phi^{n+1}\|^2 - 3\sqrt{2A} (1+ \tfrac{\beta_\mathbf{S}}{\alpha_\mathbf{M}})^{-1}\|d_{tt} \Phi^n\|^2 - \tfrac{1}{2} \|\mathcal{L}^{\frac{1}{2}} d_{tt} \Phi^n\|^2.
		\end{aligned}
	\end{equation}
	The energy stability follows after imposing $A\geq \tfrac{9}{8} L^2(1 + \tfrac{\beta_\mathbf{S}}{\alpha_\mathbf{M}})^2$.
\end{proof}

\begin{remark}
	We note that this is the first time one is able to construct and prove  second-order stabilized schemes for generalized gradient flows. A key element in establishing energy stability for these schemes lies in the boundedness of operator $\mathbf{S}$, characterized by constant $\beta_{\mathbf{S}}$ in \eqref{eq:assum-operators}. Within the SGE framework, this assumption is reasonable, although a rigorous justification would require a detailed convergence analysis of the numerical solution. In contrast, skew-symmetrization approaches commonly found in the literature-such as those applied to the Burgers equation in Example \ref{ex:burgers}-render this assumption untenable, as operator $\mathbf{S}$ in those settings, distinct from ours, typically involves unbounded linear operators.
\end{remark}

\section{Numerical Results}

We implement some numerical schemes on the three equation systems alluded to earlier to show their accuracy, robustness, and dissipative properties. We begin with the viscous Burgers equation.


\subsection{Burgers Equation}
Taking the Burgers equation as an example, we detail the construction of the scheme with a fully explicit discretization for the convective term. Since the energy functional is quadratic, its discrete gradient simplifies to $\overline{\nabla} F(u^n, u^{n+1}) = u^{n+\frac{1}{2}}$. The SGE-CN scheme for the Burgers equations is given as follows:
\begin{equation*}
	\delta_\tau u^{n+1} + \tfrac{\hat{u}^{n+\frac{1}{2}} \wedge (\hat{u}^{n+\frac{1}{2}}\hat{u}_x^{n+\frac{1}{2}})}{\|\hat{u}^{n+\frac{1}{2}}\|^2} u^{n+\frac{1}{2}} = \nu \partial_{xx} u^{n+\frac{1}{2}}.
\end{equation*}
We also present the following second-order SGE-BDF2 scheme for the equation as well,
\begin{equation*}
	\delta_{2\tau} u^{n+1} + \tfrac{\hat{u}^{n+1} \wedge (\hat{u}^{n+1}\hat{u}_x^{n+1})}{\|\hat{u}^{n+1}\|^2} u^{n+1} = \nu \partial_{xx} u^{n+1}.
\end{equation*}
As demonstrated in Remark \ref{rmk:efficient-solver}, both schemes are linearly implicit and the solution process actually requires solving only linear equations with constant-coefficients.

We consider the Burgers equation in $\Omega = (-1, 1)$ subject to periodic boundary conditions with initial condition
\begin{equation}
	u(x, 0) = - \sin(\pi x).
\end{equation}
We first test the temporal convergence of the presented schemes using a refinement test. Because the exact solution for \eqref{eq:burgers} is not available, we generate a reference solution with a sixth-order time Petrov-Galerkin method, using a spatial resolution of $N = 2560$ and a time step of $\tau=$1e-5 until $t = 1$. Then, the Burgers equation is solved with various schemes for time steps $\tau = \tau_0/2^k$ (with $\tau_0 =$8e-3 and $k = 0,\cdots,5$) and a spatial resolution of $N=1280$. The solution at the terminal time $t=1$ is recorded and the discrete $L^2$ error is computed. As depicted in the first subplot of Figure \ref{fig:burgers-fig}, all schemes achieve second-order convergence as expected.

\begin{figure}[htbp]
	\includegraphics[width=0.3\textwidth, height=101pt]{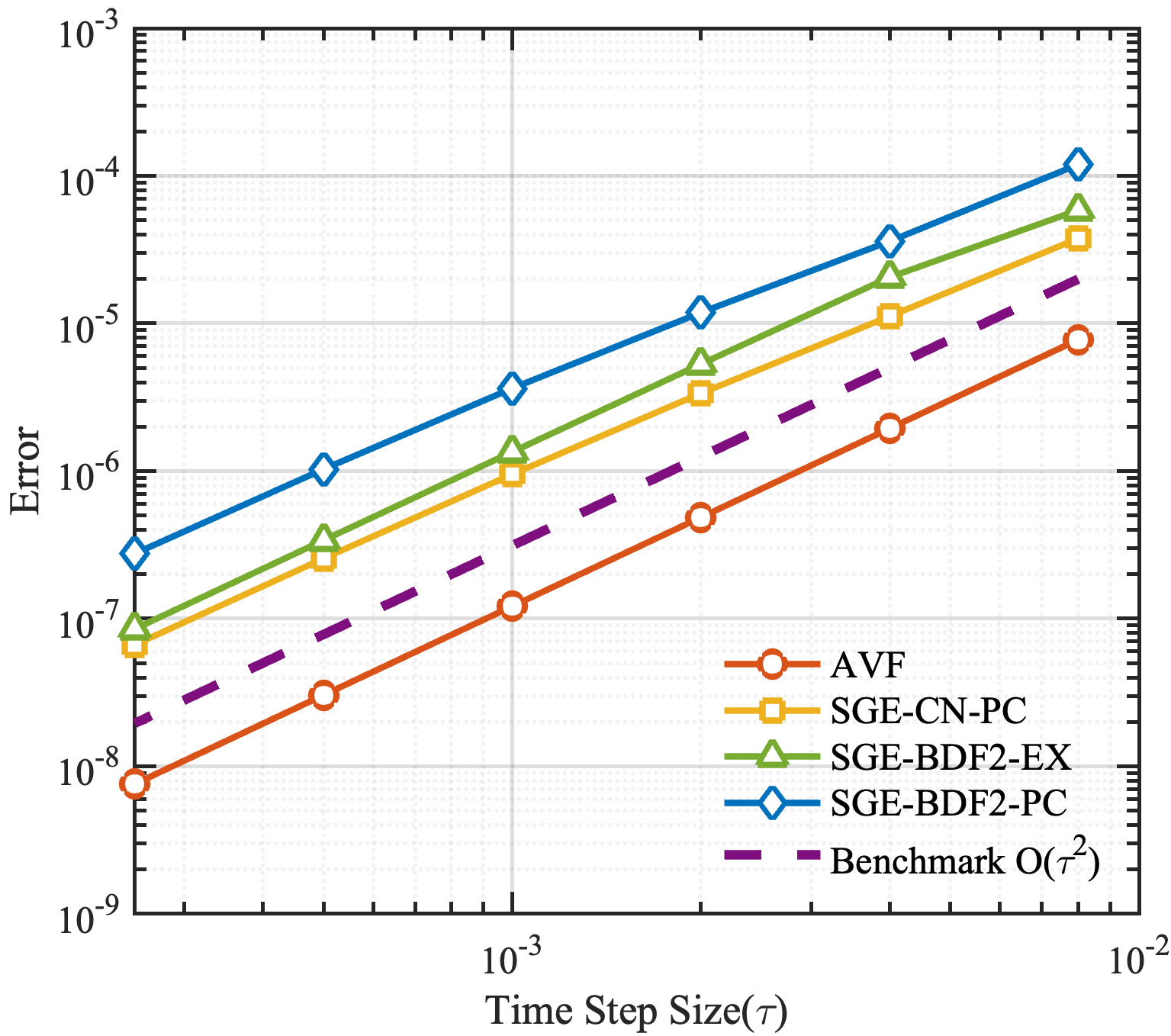}
	\includegraphics[width=0.3\textwidth, height=100pt]{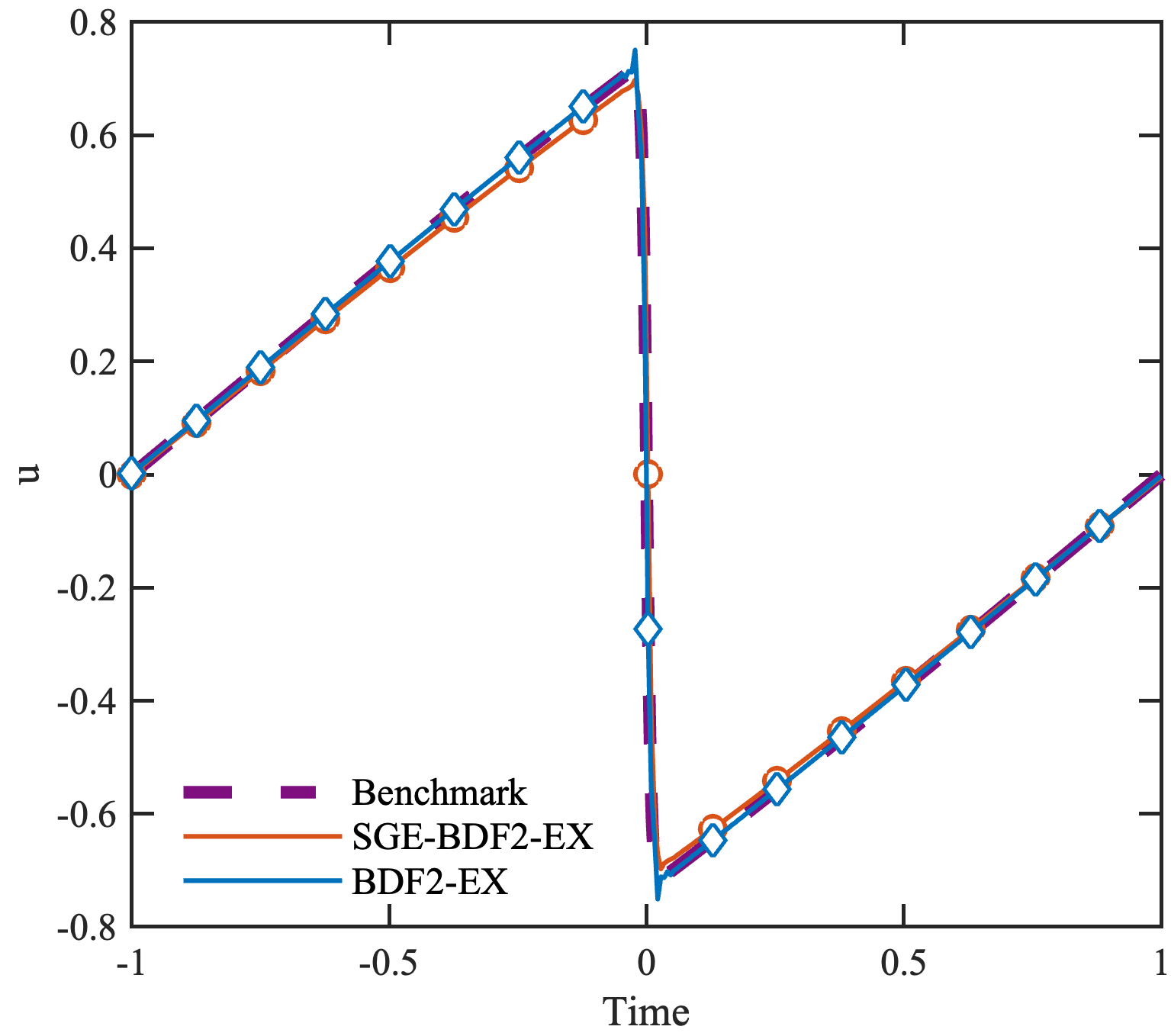}
	\includegraphics[width=0.3\textwidth, height=100pt]{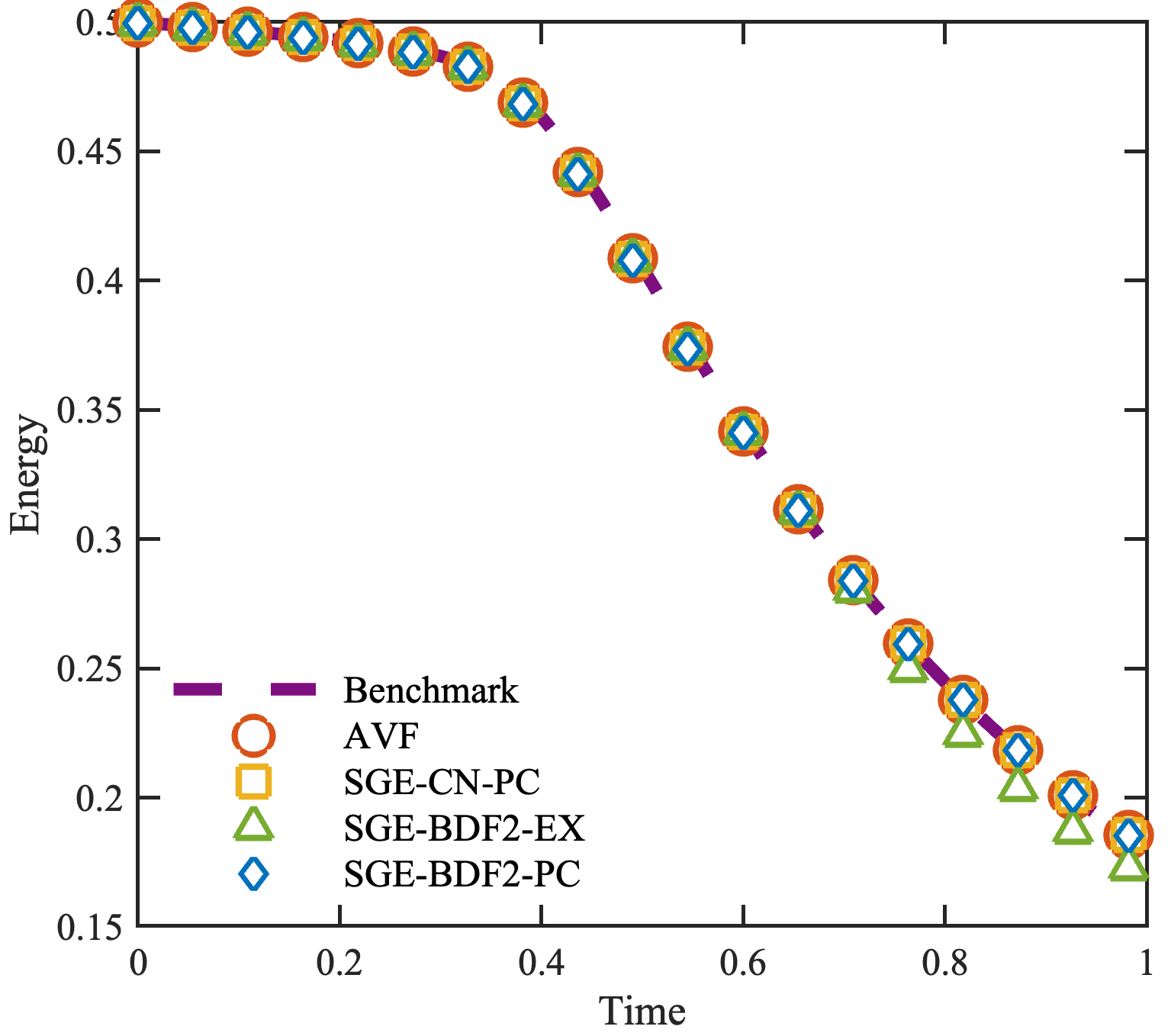}
	\caption{Numerical results of the Burgers equations. Five methods are used to produce the results. The benchmark is computed using the 6th order Petrov-Galerkin method and AVF is the energy dissipation rate preserving method using the AVF approximation to the  free energy. The other methods are explained in the text. } \label{fig:burgers-fig}
\end{figure}

Furthermore, the AVF scheme exhibits the highest accuracy due to its fully implicit nature, while SGE-CN-PC and SGE-DBF2-PC schemes perform similarly. The SGE-BDF2-EX scheme however, is the least accurate, reflecting limitations in the accuracy and robustness of the extrapolation approach employed. The second subplot of Figure \ref{fig:burgers-fig} compares the solution at $t=1$ computed using the SGE-BDF2-EX scheme with those obtained from the classical implicit-explicit SGE-BDF2-EX scheme. We observe that the classical BDF2-EX scheme produces an oscillatory solution, whereas the SGE-BDF2-EX scheme yields the correct result, thereby highlighting the robustness of energy-stable schemes over other schemes. The final subplot illustrates the evolution of the discrete energy for the proposed methods, which decrease monotonically in time.

\subsection{Incompressible NS Equation}

In the second example, we consider the incompressible NS equation, where the velocity field is subject to Dirichlet boundary conditions.

The first-order SGE-BDF1 and second-order SGE-CN schemes for the NS equation based on  reformulation \eqref{eq:ns-generalized-gdflow} are given by
\begin{equation}\label{eq:ns-scheme-ggd}
	\mathbf{R}\cdot  \delta_\tau \Phi^{n+1} = (\mathbf{M} + \mathbf{S}_1(\hat{\Phi}^{n+\delta}) + \mathbf{S}_2(\Phi^{n+\delta})) \delta_\Phi F\left(\Phi^{n+\delta}\right),
\end{equation}
where $\delta = 1$ for the SGE-BDF1 approach and $\delta = \frac{1}{2}$ for the SGE-CN scheme. For efficiency, we choose $\hat{\Phi}^{n+1} = \Phi^n$ and $\hat{\Phi}^{n+\frac{1}{2}} = \tfrac{3}{2} \Phi^n - \tfrac{1}{2}\Phi^{n-1}$. Alternatively, \eqref{eq:ns-scheme-ggd} can be recast into its component form as follows
\begin{equation*}
	\left\lbrace
	\begin{aligned}
		 & \delta_\tau \mathbf{v}^{n+1} = \tfrac{1}{Re} \Delta \mathbf{v}^{n+\delta} - \nabla p^{n+\delta} - \tfrac{\hat{\mathbf{v}}^{n+\delta} \wedge (\hat{\mathbf{v}}^{n+\delta} \cdot \nabla \hat{\mathbf{v}}^{n+\delta})}{\|\hat{\mathbf{v}}^{n+\delta}\|^2} \mathbf{v}^{n+\delta} \\
		 & \nabla \cdot \mathbf{v}^{n+\delta} = 0.
	\end{aligned}
	\right.
\end{equation*}
The above scheme can be solved efficiently. For illustration, we consider the case $\delta = 1$ and define
\begin{equation*}
	\hat{\mathbf{g}}^{n+1} = \tfrac{\mathbf{v}^n}{\|\mathbf{v}^n\|^2}, \ \hat{\mathbf{c}}^{n+1} = (\mathbf{v}^n \cdot \nabla)\mathbf{v}^n, \  \xi^{n+1} = (\hat{\mathbf{g}}^{n+1}, \mathbf{v}^{n+1}), \ \eta^{n+1} = (\hat{\mathbf{c}}^{n+1}, \mathbf{v}^{n+1}).
\end{equation*}
We decompose $\mathbf{v}^{n+1}$ in the following form
\begin{equation} \label{eq:NS-Combination}
	\mathbf{v}^{n+1} = \mathbf{v}_1^{n+1} - \xi^{n+1} \mathbf{v}_2^{n+1} + \eta^{n+1} \mathbf{v}_3^{n+1},
\end{equation}
where, $\mathbf{v}_i^{n+1}\ (i=1,2,3)$ denote solutions of three generalized Stokes systems
\begin{equation*}
	\left\lbrace
	\begin{aligned}
		 & \left(\tfrac{1}{\tau} - \tfrac{1}{Re}\Delta\right) \mathbf{v}_i^{n+1} + \nabla p_i^{n+1} = \mathbf{f}_i^{n+1}, \quad i=1,2,3, \\
		 & \nabla \cdot \mathbf{v}_i^{n+1} = 0,
	\end{aligned}
	\right.
\end{equation*}
$\mathbf{f}_1^{n+1} = \frac{1}{\tau} \mathbf{v}^n, \ \mathbf{f}_2^{n+1} =  \hat{\mathbf{c}}^{n+1}, \ \mathbf{f}_3^{n+1} = \hat{\mathbf{g}}^{n+1}$. After computing $\mathbf{v}_i^{n+1}$, we obtain the following $2 \times 2$ system for $\xi^{n+1}$ and $\eta^{n+1}$.
\begin{equation*}
	\left\lbrace
	\begin{aligned}
		 & ( 1 + (\hat{\mathbf{g}}^{n+1}, \mathbf{v}_2^{n+1})) \xi^{n+1} - (\hat{\mathbf{g}}^{n+1}, \mathbf{v}_3^{n+1}) \eta^{n+1} = (\hat{\mathbf{g}}^{n+1}, \mathbf{v}_1^{n+1}), \\
		 & (\hat{\mathbf{c}}^{n+1}, \mathbf{v}_2^{n+1}) \xi^{n+1} + ( 1 - (\hat{\mathbf{c}}^{n+1}, \mathbf{v}_3^{n+1})) \eta^{n+1} = (\hat{\mathbf{c}}^{n+1}, \mathbf{v}_1^{n+1}),
	\end{aligned}
	\right.
\end{equation*}
In the proposed discretization, we only need to solve three generalized Stokes equations and a small $2 \times 2$ linear system, thanks to the explicit implementation of the convective term. To further enhance efficiency, alternative techniques, such as projection methods (see \cite{Shen-Projection}), can be employed to discretize the NS equations. This approach would reduce the problem to solving only Poisson equations with constant coefficients, which can be solved efficiently using fast Fourier transformation on the square domain $\Omega$. Due to space limitations, a detailed discussion of the projection method is omitted.

\begin{figure}[htbp]
	\begin{center}
		\includegraphics[width=0.3\textwidth, height=101pt]{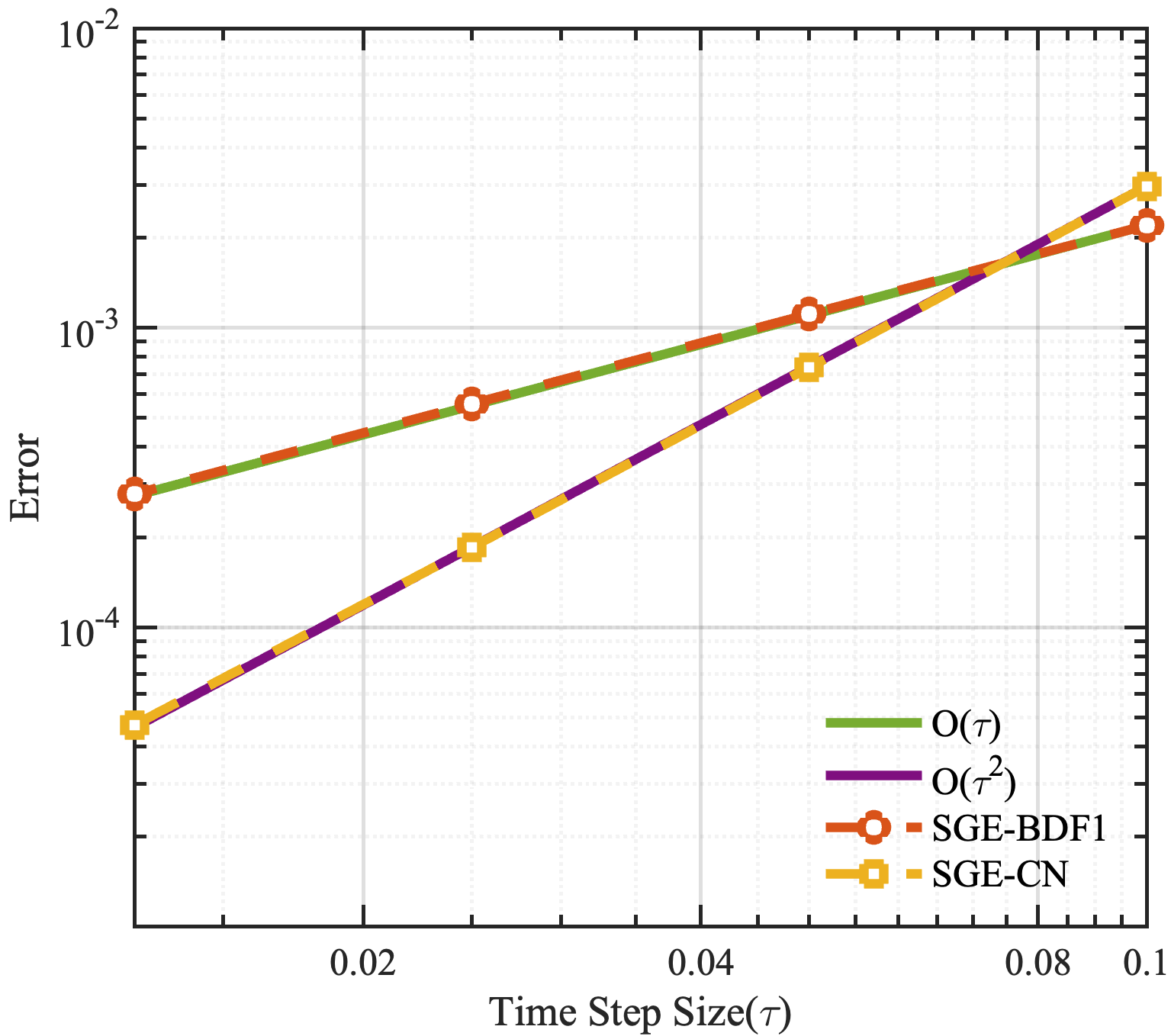}
		\includegraphics[width=0.3\textwidth, height=100pt]{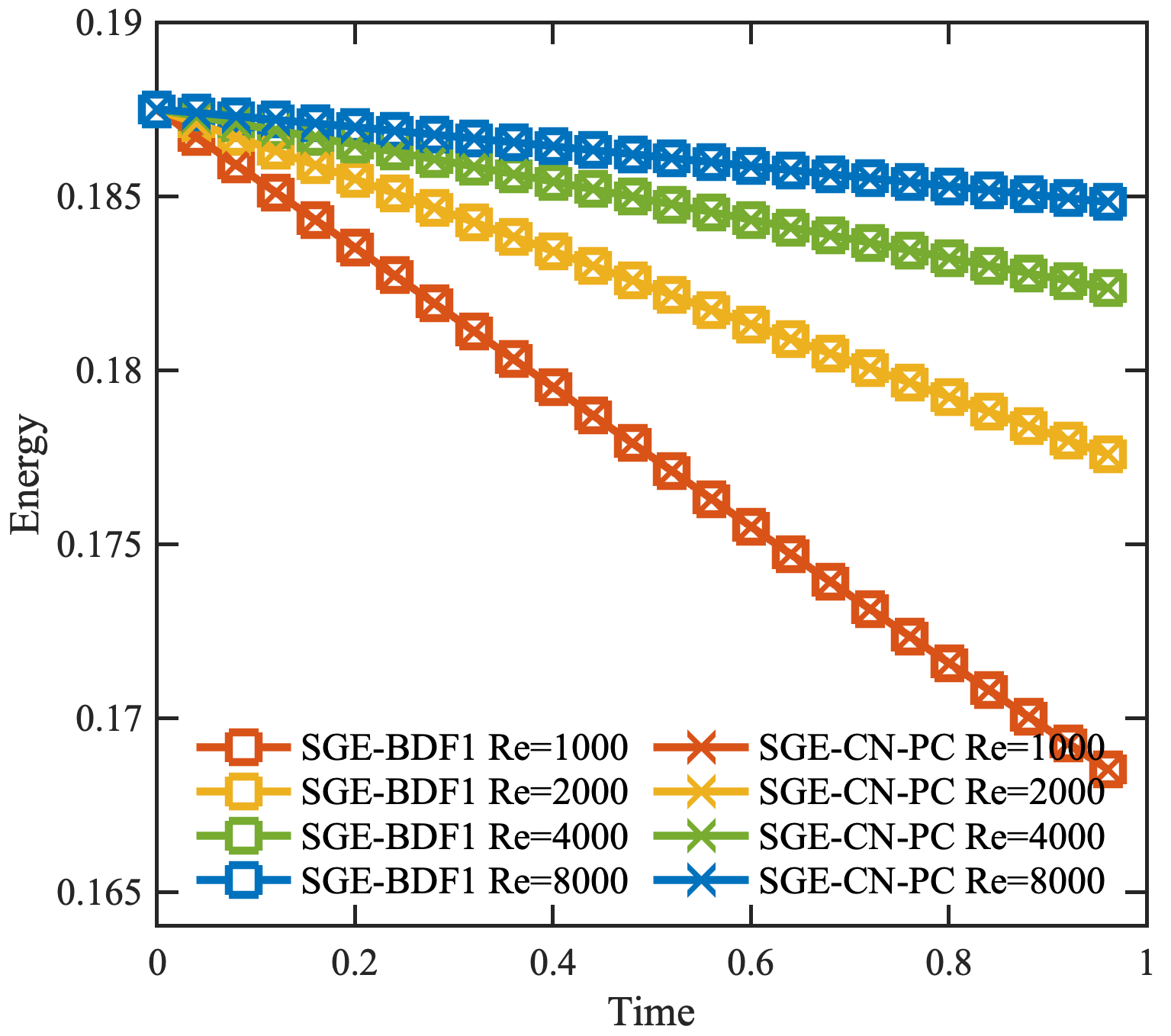}
	\end{center}
	\caption{Temporal accuracy test (left)  and time history of the discrete energy (right) of the NS equations}\label{fig:ns-fig}
\end{figure}

We simulate the forced incompressible NS system in the unit square domain with the following exact solution
\begin{equation}\label{eq:ns-exact}
	\left\lbrace
	\begin{aligned}
		 & u(x, y, t) = 0.5 \cos{(t^2)} \left( -\cos{(2 \pi x)} \sin{(2 \pi y)} + \sin{(2 \pi y)} \right), \\
		 & v(x, y, t) = 0.5 \cos{(t^2)} \left(\sin{(2 \pi x)} \cos{(2 \pi y)} - \sin{(2 \pi x)}\right),    \\
		 & p(x, y, t) = 0.5 \cos{(t^2)} \cos{(2 \pi x)} \cos{(2 \pi y)},                                   \\
	\end{aligned}
	\right.
\end{equation}
which can be derived by augmenting the momentum equation with a source term. To assess the temporal convergence of the proposed schemes, we fix the spatial resolution at $N=1024$ to ensure that spatial errors are negligible. The NS equation is then solved using different methods until $t=1$. The first subplot of Figure \ref{fig:ns-fig} illustrates the results of the mesh refinement, revealing that the SGE-BDF1 and SGE-CN schemes achieve first- and second-order accuracy, respectively, as anticipated.

We then investigate the energy dissipation property. We solve the NS equation using the same initial condition as in \eqref{eq:ns-exact} but without the source term, for different Reynolds numbers with a fixed time step, $\tau=$1e-3. The temporal evolution of the energy for various Reynolds numbers is shown in the right subplot of Figure \ref{fig:ns-fig}. The energy profiles obtained from both schemes agree very well and decrease monotonically across all Reynolds numbers.
\begin{figure}[htbp]
	\begin{center}
		\includegraphics[width=0.3\textwidth]{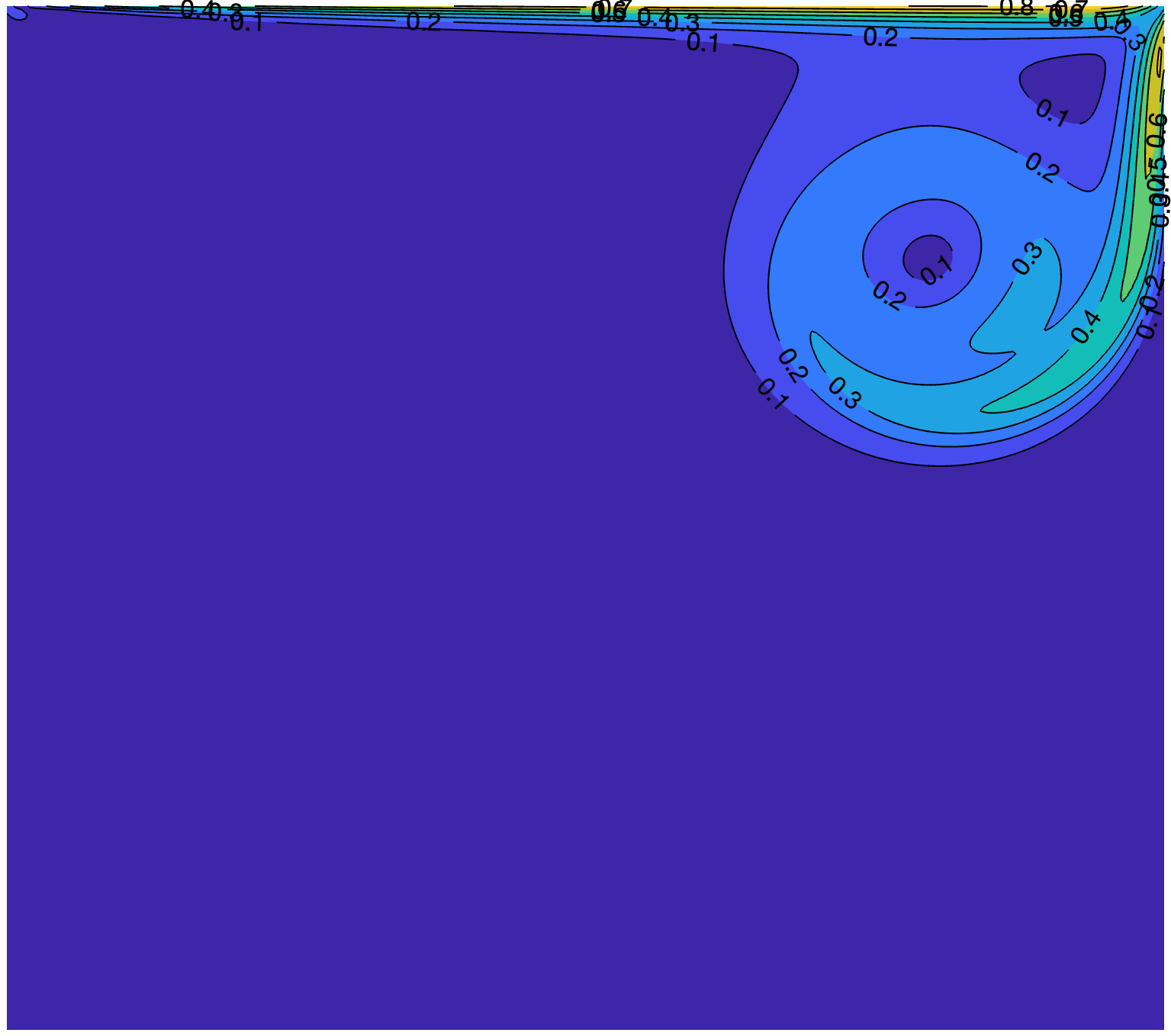}
		\includegraphics[width=0.3\textwidth]{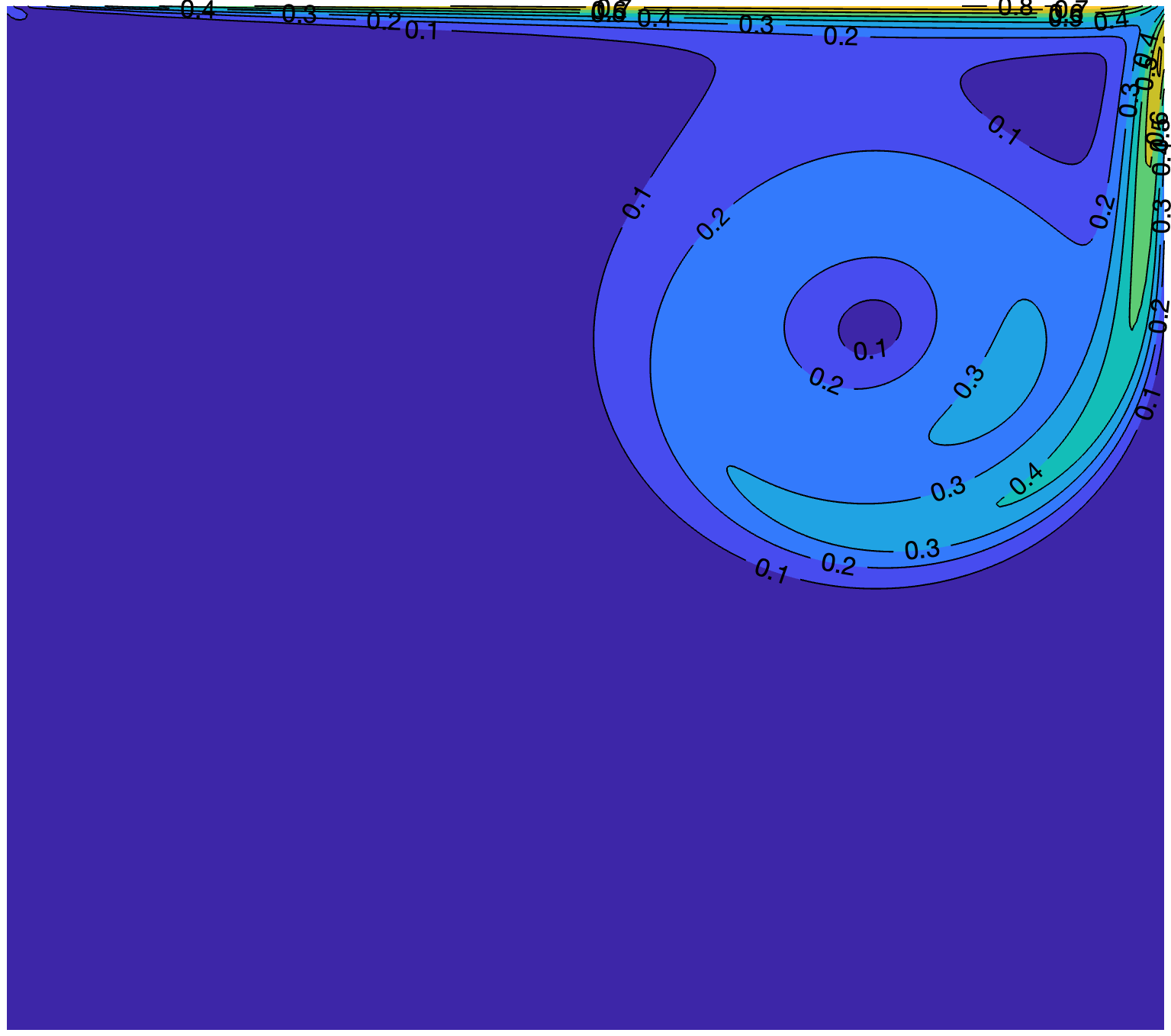}
		\includegraphics[width=0.3\textwidth]{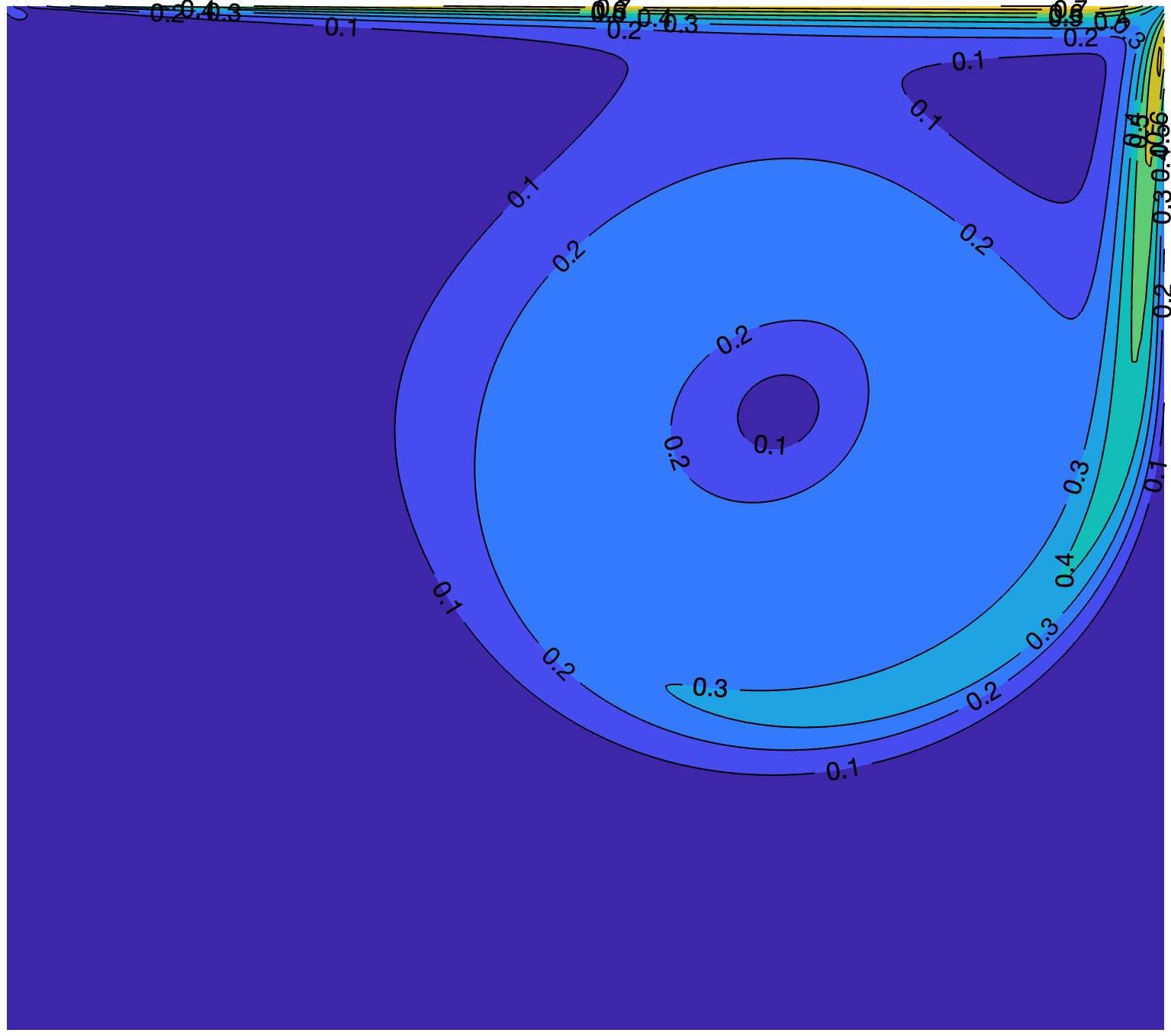}
	\end{center}
	\caption{Snapshots of the lid-driven cavity benchmark problem at $t = 4, 6, 10$ for $Re = 5000$. }\label{fig:driven-cavity}
\end{figure}
We next investigate the classical lid-driven cavity benchmark problem on the unit square domain. The horizontal velocity is prescribed as $u(x, 1, t) = 1$ at the top boundary, while no-slip boundary conditions are applied on all other boundaries. The initial velocity field is set identically zero. Figure \ref{fig:driven-cavity} presents the solution of the driven cavity problem solved by the SGE-CN scheme with time step $\tau=$4e-3 and $Re = 5000$ at $t = 4, 6, 10$, respectively. The results are in excellent agreement with those reported in \cite{Doan-2025} using the dynamically regularized Lagrange multiplier method.

\subsection{CH-NS equation}
Based on \eqref{eq:ch-gradient}, the second-order SGE-SBDF2 scheme for the CH-NS equation is given by
\begin{equation}\label{eq:chns-bdf2}
	\left\lbrace
	\begin{aligned}
		 & \mathbf{R} \cdot \delta_{2\tau}\Phi^{n+1} = (\mathbf{M} + \mathbf{S}_1(\hat{\Phi}^{n+1}) + \mathbf{S}_2(\Phi^{n+1}) ) ( \mathbf{v}^{n+1},  \overline{\mu}^{n+1}, 0 )^\top,               \\
		 & \mu^{n+1} = - \gamma \varepsilon \Delta \phi^{n+1} - \tfrac{\gamma}{\varepsilon} \tau M A\Delta d_t \phi^{n+1} + \tfrac{\gamma}{\varepsilon} (2f^\prime(\phi^n) - f^\prime(\phi^{n-1})),
	\end{aligned}
	\right.
\end{equation}
where $\overline{\mu}^{n+1} = \mu^{n+1} - \tfrac{1}{|\Omega|} \int_\Omega \mu^{n+1} d\mathbf{x}$.

The second-order stabilization SGE-SCN scheme for the CH-NS equation is
\begin{equation} \label{eq:chns-cn}
	\left\lbrace
	\begin{aligned}
		 & \mathbf{R} \cdot \delta_\tau \Phi^{n+1} = (\mathbf{M} + \mathbf{S}_1(\hat{\Phi}^{n+\frac{1}{2}}) + \mathbf{S}_2(\Phi^{n+\frac{1}{2}}) ) ( \tilde{\mathbf{v}}^{n+\frac{1}{2}},  \overline{\mu}^{n+\frac{1}{2}}, 0 )^\top, \\
		 & \mu^{n+\frac{1}{2}} = - \gamma \varepsilon\Delta \tilde{\phi}^{n+\frac{1}{2}} - \tfrac{\gamma}{\varepsilon} \tau M A \Delta d_{2t} \phi^{n+1} + \tfrac{\gamma}{2\varepsilon} (3f^\prime(\phi^n) - f^\prime(\phi^{n-1})), \\
	\end{aligned}
	\right.
\end{equation}
where $\tilde{\phi}^{n+\frac{1}{2}} = \tfrac{3}{4}\phi^{n+1} + \tfrac{1}{4}\phi^{n-1}$.

Notice that the stabilized term is only added partially to the Cahn-Hilliard component here, but we will show that \eqref{eq:chns-bdf2} and \eqref{eq:chns-cn} are energy stable. We only discuss the SGE-SBDF2 scheme as that for the SGE-SCN scheme is analogous. The SGE-SBDF2 scheme, expressed in the component form, is given by
\begin{equation}\label{eq:ch-bdf2-component}
	\left\lbrace
	\begin{aligned}
		 & \delta_{2\tau} \mathbf{v}^{n+1} = \rho^{-1}( \nu \Delta \mathbf{v}^{n+1} - \nabla p^{n+1} ) - (\hat{\mathbf{g}}^{n+1}, \mathbf{g}^{n+1}) \hat{\mathbf{c}}^{n+1}_{\mathbf{v}} + (\hat{\mathbf{c}}^{n+1}, \mathbf{g}^{n+1}) \hat{\mathbf{g}}_{\mathbf{v}}^{n+1}, \\
		 & \nabla \cdot \mathbf{v}^{n+1} = 0,                                                                                                                                                                                                                             \\
		 & \delta_{2\tau}\phi^{n+1} = M \Delta \overline{\mu}^{n+1} - (\hat{\mathbf{g}}^{n+1}, \mathbf{g}^{n+1}) \hat{\mathbf{c}}_\phi^{n+1} + (\hat{\mathbf{c}}^{n+1}, \mathbf{g}^{n+1}) \hat{\mathbf{g}}_\phi^{n+1}                                                     \\
		 & \mu^{n+1} = - \gamma \varepsilon \Delta \phi^{n+1} - \tfrac{\gamma}{\varepsilon} \tau MA \Delta d_{2t} \phi^{n+1} + \tfrac{\gamma}{\varepsilon} (2f^\prime(\phi^n) - f^\prime(\phi^{n-1})),
	\end{aligned}
	\right.
\end{equation}
where,
\begin{equation*}
	\begin{aligned}
		 & \mathbf{g}^{n+1} = (\mathbf{g}_\mathbf{v}^{n+1}, \mathbf{g}_\phi^{n+1}), \ \mathbf{g}_\mathbf{v}^{n+1} =  \rho \mathbf{v}^{n+1}, \ \mathbf{g}_{\phi}^{n+1} = \overline{\mu}^{n+1},                                                                                                                                                                                                        \\
		 & \hat{\mathbf{g}}^{n+1} = (\hat{\mathbf{g}}_\mathbf{v}^{n+1}, \hat{\mathbf{g}}_\phi^{n+1}), \ \hat{\mathbf{g}}^{n+1}_\mathbf{v} = \tfrac{\rho \hat{\mathbf{v}}^{n+1}}{\rho^2 \|\hat{\mathbf{v}}^{n+1}\|^2 + \|\hat{\overline{\mu}}^{n+1}\|^2}, \ \hat{\mathbf{g}}_\phi^{n+1} = \tfrac{\hat{\overline{\mu}}^{n+1}}{\rho^2 \|\hat{\mathbf{v}}^{n+1}\|^2 + \|\hat{\overline{\mu}}^{n+1}\|^2}, \\
		 & \hat{\mathbf{c}}^{n+1} = (\hat{\mathbf{c}}_\mathbf{v}^{n+1}, \hat{\mathbf{c}}_\phi^{n+1}), \ \hat{\mathbf{c}}^{n+1}_\mathbf{v} = \hat{\mathbf{v}}^{n+1} \cdot \nabla \hat{\mathbf{v}}^{n+1} + \tfrac{1}{\rho} \hat{\phi}^{n+1} \nabla \hat{\overline{\mu}}^{n+1}, \hat{\mathbf{c}}_\phi^{n+1} = \nabla \cdot (\hat{\mathbf{v}}^{n+1} \hat{\phi}^{n+1}).
	\end{aligned}
\end{equation*}
We next show  that \eqref{eq:ch-bdf2-component} preserves mass conservation and possesses energy stability.
\begin{theorem}
	Suppose initial data $\phi^0$ and first-step numerical solution $\phi^1$ satisfy mass constraint $(\phi^0, 1) = (\phi^1, 1) = C$. Then, for any $n \geq 2$, the solution $\phi^n$ obtained from \eqref{eq:ch-bdf2-component} preserves mass conservation
	\begin{equation*}
		(\phi^n, 1) = C, \quad n \geq 2,
	\end{equation*}
	and provided the stabilization parameter $A$ is sufficiently large, it is energy stable in the sense that
	\begin{equation*}
		\begin{gathered}
			\widetilde{E}_{BDF2}^{n+1} - \widetilde{E}_{BDF2}^n \leq - \tfrac{\gamma}{\varepsilon} ( 2 \min \{1, \sqrt{\nu}\} \tfrac{M \sqrt{A}}{M + \beta_\Omega} - 3 L),
		\end{gathered}
	\end{equation*}
	where $\beta_\Omega$ will be defined in the proof
	\begin{equation*}
		\begin{gathered}
			\widetilde{E}_{BDF2}^{n+1} = \tfrac{\rho}{4} \|\mathbf{v}^{n+1}\|^2 + \tfrac{\rho}{4}\|2\mathbf{v}^{n+1} - \mathbf{v}^n\|^2 + \tfrac{\varepsilon \gamma}{4} \|\nabla \phi^{n+1}\|^2 + \tfrac{\varepsilon \gamma}{4} \|\nabla (2\phi^{n+1} - \phi^n)\|^2 \\
			\quad + \tfrac{\gamma}{2\varepsilon} (3f(\phi^{n+1}) - f(\phi^n))+ \tfrac{\gamma}{2\varepsilon}\left(2 \min \{1, \sqrt{\nu}\} \tfrac{M \sqrt{A}}{M + \beta_\Omega} + 3 L\right) \|d_t\phi^{n+1}\|^2.
		\end{gathered}
	\end{equation*}
\end{theorem}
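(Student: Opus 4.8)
The plan is to prove the two assertions in turn — mass conservation first, because it supplies the zero–mean property of $d_t\phi^{n+1}$ and $d_{2t}\phi^{n+1}$ that the energy estimate exploits. For \emph{mass conservation}, pair the third line of \eqref{eq:ch-bdf2-component} with the constant $1$: every term on the right drops, since $\int_\Omega\Delta\overline{\mu}^{n+1}=0$, $\int_\Omega\nabla\cdot(\hat{\mathbf{v}}^{n+1}\hat\phi^{n+1})=0$, and $\hat{\mathbf{g}}_\phi^{n+1}$ is a scalar multiple of $\hat{\overline{\mu}}^{n+1}$, which has zero mean. Hence $(\delta_{2\tau}\phi^{n+1},1)=0$, i.e. $3(\phi^{n+1},1)=4(\phi^n,1)-(\phi^{n-1},1)$, and with $(\phi^0,1)=(\phi^1,1)=C$ a two–step induction gives $(\phi^n,1)=C$ for all $n\ge2$; in particular $d_t\phi^{n+1}$ and $d_{2t}\phi^{n+1}$ are of zero spatial mean, which will legitimize the Poincar\'e inequality below.

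For the \emph{basic energy identity}, take the inner product of the momentum equation with $\rho\mathbf{v}^{n+1}=\mathbf{g}_{\mathbf{v}}^{n+1}$ and of the $\phi$–equation with $\overline{\mu}^{n+1}=\mathbf{g}_\phi^{n+1}$, and add. Integration by parts produces the dissipation $-\nu\|\nabla\mathbf{v}^{n+1}\|^2-M\|\nabla\overline{\mu}^{n+1}\|^2$, the pressure term vanishes by $\nabla\cdot\mathbf{v}^{n+1}=0$, and — the essential point of the embedding — the two rank–two ZEC contributions collapse to $-(\hat{\mathbf{g}}^{n+1},\mathbf{g}^{n+1})(\hat{\mathbf{c}}^{n+1},\mathbf{g}^{n+1})+(\hat{\mathbf{c}}^{n+1},\mathbf{g}^{n+1})(\hat{\mathbf{g}}^{n+1},\mathbf{g}^{n+1})=0$ by bilinearity, independently of the spatial discretization. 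Since $d_{2t}\phi^{n+1}$ has zero mean one may replace $\overline{\mu}^{n+1}$ by $\mu^{n+1}$ in the pairing and substitute the fourth line of \eqref{eq:ch-bdf2-component}, which brings in $\gamma\varepsilon(\nabla d_{2t}\phi^{n+1},\nabla\phi^{n+1})$, the explicit bulk term $\tfrac{\gamma}{\varepsilon}(d_{2t}\phi^{n+1},2f'(\phi^n)-f'(\phi^{n-1}))$, and the nonnegative stabilization term $\tfrac{\gamma MA}{2\varepsilon}\|\nabla d_{2t}\phi^{n+1}\|^2$. Applying Lemma~\ref{lem:difference-identity} to $(d_{2t}\mathbf{v}^{n+1},\mathbf{v}^{n+1})$ and (in the $\|\nabla\cdot\|$–seminorm) to $(\nabla d_{2t}\phi^{n+1},\nabla\phi^{n+1})$ converts these into $\delta_\tau$ of the quadratic block of $\widetilde{E}_{BDF2}$ plus the nonnegative remainders $\|d_{tt}\mathbf{v}^n\|^2$, $\|\nabla d_{tt}\phi^n\|^2$; applying the first inequality of Lemma~\ref{lem:difference-inequality} to the bulk term extracts $\delta_\tau(3f(\phi^{n+1})-f(\phi^n))$ at the cost of a positive multiple of $L\,(\|d_t\phi^{n+1}\|^2+\|d_t\phi^n\|^2)$. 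Collecting gives an inequality of the form $\delta_\tau(\text{quadratic}+\text{bulk part of }\widetilde{E}_{BDF2})+(\text{dissipation}+\text{stabilization})\le cL(\|d_t\phi^{n+1}\|^2+\|d_t\phi^n\|^2)$.

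The obstacle specific to this theorem is that the stabilization acts \emph{only} on the Cahn--Hilliard block, so the destabilizing $\|d_t\phi^{n+1}\|^2$ cannot be dominated by an $A$–stabilization of the velocity. Mimicking the mechanism of Theorem~\ref{thm:stability-bdf2}, I would pair the system $\mathbf{R}\cdot\delta_{2\tau}\Phi^{n+1}=\mathbf{L}(\hat{\Phi}^{n+1})\mathbf{g}^{n+1}$ once more with $2\tau c\,d_{2t}\Phi^{n+1}$ for a suitable $c$: the left side gives $c(\|d_{2t}\mathbf{v}^{n+1}\|^2+\|d_{2t}\phi^{n+1}\|^2)$; on the right the pressure term again drops by incompressibility, the viscous and mobility terms are handled by Young's inequality against the already available $\|\nabla\mathbf{v}^{n+1}\|^2$ and $\|\nabla\overline{\mu}^{n+1}\|^2$, and the skew term is controlled through the boundedness of $\mathbf{S}_1+\mathbf{S}_2$ together with $\|\mathbf{g}^{n+1}\|\le C_\Omega(\rho^2\|\nabla\mathbf{v}^{n+1}\|^2+\|\nabla\overline{\mu}^{n+1}\|^2)^{1/2}$ (Poincar\'e, using the zero–mean and boundary structure), the combined constant being the $\beta_\Omega$ in the statement. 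After a Cauchy--Schwarz/Young balancing exactly in the spirit of Lemma~\ref{lem:operator}, the $O(\tau A)$ stabilization pairs with the $O(\tau)$ dissipation so that the powers of $\tau$ cancel, leaving a coercive term controlled from below by $\min\{1,\sqrt\nu\}\tfrac{M\sqrt{A}}{M+\beta_\Omega}(\|d_{2t}\mathbf{v}^{n+1}\|^2+\|d_{2t}\phi^{n+1}\|^2)$. Invoking Poincar\'e on $d_t\phi^{n+1}$ and the identity $\|d_{2t}\phi^{n+1}\|^2=6\|d_t\phi^{n+1}\|^2-2\|d_t\phi^n\|^2+3\|d_{tt}\phi^n\|^2$ then exposes $\|d_t\phi^{n+1}\|^2$ with a coefficient proportional to $\sqrt{A}$; adding the $\|d_t\phi^{n+1}\|^2$ term built into $\widetilde{E}_{BDF2}$ telescopes the $-\|d_t\phi^n\|^2$ cross–contributions, and choosing $A$ above the stated threshold makes the net coefficient of $\|d_t\phi^{n+1}\|^2$ negative. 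Gathering terms and discarding the remaining nonnegative $\|d_{tt}\cdot\|$ quantities yields the asserted decay of $\widetilde{E}_{BDF2}$. I expect the genuine labor to lie in the bookkeeping of these lower–order remainders and in pinning down $\beta_\Omega$; the wedge cancellation and the telescoping identities are mechanical once the pairings are set up.
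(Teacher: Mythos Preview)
Your mass-conservation argument and the basic energy identity (pairing with $\mathbf g^{n+1}$ so that the wedge terms cancel) are correct and coincide with the paper's proof. The gap is in your second pairing. You propose to test the \emph{full} system against $d_{2t}\Phi^{n+1}$, but the momentum block then produces $2\tau c\,\tfrac{\nu}{\rho}(\Delta\mathbf v^{n+1},d_{2t}\mathbf v^{n+1})=-2\tau c\,\tfrac{\nu}{\rho}(\nabla\mathbf v^{n+1},\nabla d_{2t}\mathbf v^{n+1})$, and Young's inequality against the available $\|\nabla\mathbf v^{n+1}\|^2$ leaves a residual $\|\nabla d_{2t}\mathbf v^{n+1}\|^2$ that nothing absorbs --- there is no velocity stabilization in \eqref{eq:ch-bdf2-component}. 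Equivalently, invoking Lemma~\ref{lem:operator} with $\Psi_2=d_{2t}\Phi^{n+1}$ yields $(-\mathbf M\,d_{2t}\Phi^{n+1},d_{2t}\Phi^{n+1})^{1/2}$, whose velocity component is again $\|\nabla d_{2t}\mathbf v^{n+1}\|$. The $L^2$ quantity $c\|d_{2t}\mathbf v^{n+1}\|^2$ on the left cannot control a gradient norm, so the argument does not close.

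The paper avoids this by pairing \emph{only the phase equation} of \eqref{eq:ch-bdf2-component} with $\omega\,d_{2t}\phi^{n+1}$. Every right-hand-side term there --- the mobility term $M\Delta\overline\mu^{n+1}$ and the scalar-weighted corrections $\hat{\mathbf c}_\phi^{n+1}=\nabla\!\cdot(\hat{\mathbf v}^{n+1}\hat\phi^{n+1})$, $\hat{\mathbf g}_\phi^{n+1}$ --- can, after one integration by parts and $L^\infty$ control of the explicit data, be bounded by $(M+\beta_\Omega)(\|\nabla\mathbf v^{n+1}\|+\|\nabla\overline\mu^{n+1}\|)\,\|\nabla d_{2t}\phi^{n+1}\|$, with $\beta_\Omega$ built from $\max_n(\|\phi^n\|_\infty+\|\nabla\phi^n\|_\infty+\|\mathbf v^n\|_\infty)$ rather than a Poincar\'e constant. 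Young's inequality then trades this against the $\phi$-stabilization $\tfrac{\gamma}{\varepsilon}\tau MA\|\nabla d_{2t}\phi^{n+1}\|^2$ and the dissipation already sitting in the basic identity; the choice $\omega=\min\{1,\sqrt\nu\}\tfrac{M\gamma}{\varepsilon(M+\beta_\Omega)}\sqrt A$ balances the two. After that, your final telescoping step via the last identity of Lemma~\ref{lem:difference-identity} is correct.
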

\begin{proof}
	We note here that, by the definition, $(\hat{\mathbf{c}}_\phi^{n+1}, 1) = (\hat{\mathbf{g}}_\phi^{n+1}, 1) = (\Delta \overline{\mu}^{n+1}, 1)= 0$. Taking the inner-product of both sides of the fourth equation in \eqref{eq:ch-bdf2-component} with $2\tau$, we have
	\begin{equation*}
		(d_{2t} \phi^{n+1}, 1) = 0.
	\end{equation*}
	Combining this result with the initial condition and applying an inductive argument, we obtain
	\begin{equation*}
		(\phi^{n+1}, 1) = \tfrac{1}{3} (4\phi^n - \phi^{n-1}, 1) = C.
	\end{equation*}
	Taking the inner product of the first quation in \eqref{eq:ch-bdf2-component} with $2\tau\mathbf{g}_\mathbf{v}^{n+1} = 2\tau\rho \mathbf{v}^{n+1}$, and then applying integration by parts, together with the incompressibility condition, we deduce
	\begin{equation}\label{eq:chhs-es-01}
		\begin{gathered}
			d_t \left( \tfrac{\rho}{2}\|\mathbf{v}^{n+1}\|^2 + \tfrac{\rho}{2} \|2\mathbf{v}^{n+1} - \mathbf{v}^n\|^2\right) + \tfrac{\rho}{2} \|d_{tt} \mathbf{v}^n\|^2 \\
			= -2\tau (\nu\|\nabla \mathbf{v}^{n+1}\|^2 + (\hat{\mathbf{g}}^{n+1}, \mathbf{g}^{n+1})(\hat{\mathbf{c}}_\mathbf{v}^{n+1}, \mathbf{g}_\mathbf{v}^{n+1}) - (\hat{\mathbf{c}}^{n+1}, \mathbf{g}^{n+1}) (\hat{\mathbf{g}}_{\mathbf{v}}^{n+1}, \mathbf{g}_{\mathbf{v}}^{n+1})).
		\end{gathered}
	\end{equation}
	Taking the inner product of the third equation in \eqref{eq:ch-bdf2-component} with $2\tau \mathbf{g}^{n+1}_{\phi} = 2\tau \overline{\mu}^{n+1}$ yields
	\begin{equation}\label{eq:chns-es-02}
		\begin{gathered}
			(d_{2t} \phi^{n+1}, \overline{\mu}^{n+1}) = -2\tau\|\nabla \overline{\mu}^{n+1}\|^2 \\
			- 2\tau(\hat{\mathbf{g}}^{n+1}, \mathbf{g}^{n+1}) (\hat{\mathbf{c}}_{\phi}^{n+1}, \hat{\mathbf{g}}_\phi^{n+1}) + 2\tau(\hat{\mathbf{c}}^{n+1}, \mathbf{g}^{n+1})(\hat{\mathbf{g}}_\phi^{n+1}, \mathbf{g}_\phi^{n+1}).
		\end{gathered}
	\end{equation}
	Furthermore, by taking the inner product of the last equation in \eqref{eq:ch-bdf2-component} with $d_{2t} \phi^{n+1}$ and invoking the argument detailed in the proof of Theorem \ref{thm:stability-bdf2}, we deduce
	\begin{equation}\label{eq:chns-es-03}
		\begin{aligned}
			 & d_t \left(\tfrac{\gamma \varepsilon}{2} \|\nabla\phi^{n+1}\|^2 + \tfrac{\gamma \varepsilon}{2} \|2\nabla\phi^{n+1} - \nabla \phi^n\|^2 + \tfrac{\gamma}{\varepsilon} (3 f(\phi^{n+1}) - f(\phi^n))\right) \\
			 & \ \leq - \tfrac{\gamma \varepsilon}{2} \|\nabla d_{tt} \phi^n\|^2 - \tfrac{\gamma}{\varepsilon} \tau M A \|\nabla d_{2t} \phi^{n+1}\|^2 + (\mu^{n+1}, d_{2t}\phi^{n+1})                                   \\
			 & \ \ + \tfrac{3\gamma L}{\varepsilon} (\|d_t \phi^{n+1}\|^2 + \|d_t \phi^n\|^2).
		\end{aligned}
	\end{equation}
	Invoking the definition of $\overline{\mu}^{n+1}$ together with the mass conservation immediately yields $(d_{2t} \phi^{n+1}, \overline{\mu}^{n+1}) = (d_{2t} \phi^{n+1}, \mu^{n+1})$. Thereafter, by summing \eqref{eq:chhs-es-01}, \eqref{eq:chns-es-02}, \eqref{eq:chns-es-03}, we arrive at
	\begin{equation}\label{eq:chns-es-E1}
		\begin{aligned}
			 & d_t \tilde{E}_1 (\phi^n, \phi^{n+1}, \mathbf{v}^n, \mathbf{v}^{n+1}) \leq -2\tau \nu \|\nabla \mathbf{v}^{n+1}\|^2 - 2\tau \|\nabla \overline{\mu}^{n+1}\|^2 - \tfrac{\rho}{2} \|d_{tt} \mathbf{v}^n\|^2      \\
			 & - \tfrac{\gamma \varepsilon}{2} \|\nabla d_{tt} \phi^n\|^2  - \tfrac{\gamma}{\varepsilon} \tau M A \|\nabla d_{2t} \phi^{n+1}\|^2 + \tfrac{3\gamma L}{\varepsilon} (\|d_t \phi^{n+1}\|^2 + \|d_t \phi^n\|^2),
		\end{aligned}
	\end{equation}
	where
	\begin{equation*}
		\begin{aligned}
			 & \tilde{E}_1 (\phi^n, \phi^{n+1}, \mathbf{v}^n, \mathbf{v}^{n+1}) = \tfrac{\rho}{2}\|\mathbf{v}^{n+1}\|^2 + \tfrac{\rho}{2} \|2\mathbf{v}^{n+1} - \mathbf{v}^n\|                               \\
			 & \ + \tfrac{\gamma \varepsilon}{2} \|\nabla\phi^{n+1}\|^2 + \tfrac{\gamma \varepsilon}{2} \|2\nabla\phi^{n+1} - \nabla \phi^n\|^2 + \tfrac{\gamma}{\varepsilon} (3 f(\phi^{n+1}) - f(\phi^n)).
		\end{aligned}
	\end{equation*}
	By taking the \(L^2\)–inner product of the third equation in \eqref{eq:ch-bdf2-component} with \(\omega\,2\tau\,d_{2t}\phi^{n+1}\), we obtain
	\begin{equation}\label{eq:chns-es-04}
		\begin{aligned}
			 & \omega \|d_{2t} \phi^{n+1}\|^2  \leq - 2 \tau \omega  M (\nabla \overline{\mu}^{n+1}, \nabla d_{2t} \phi^{n+1})                                                                                                          \\
			 & \ - 2\tau \omega (\hat{\mathbf{g}}^{n+1}, \mathbf{g}^{n+1}) (\hat{\mathbf{c}}_\phi^{n+1}, d_{2t} \phi^{n+1}) + 2\tau \omega (\hat{\mathbf{c}}^{n+1}, \mathbf{g}^{n+1}) (\hat{\mathbf{g}}_\phi^{n+1}, d_{2t} \phi^{n+1}).
		\end{aligned}
	\end{equation}
	Employing the Cauchy-Schwarz and Poincar\'e inequalities, performing integration by parts and invoking the divergence-free constraint, one readily establishes
	\begin{equation}\label{eq:chns-es-05}
		\begin{aligned}
			 & -(\hat{\mathbf{g}}^{n+1}, \mathbf{g}^{n+1}) (\hat{\mathbf{c}}_\phi^{n+1}, d_{2t} \phi^{n+1}) +  (\hat{\mathbf{c}}^{n+1}, \mathbf{g}^{n+1}) (\hat{\mathbf{g}}_\phi^{n+1}, d_{2t} \phi^{n+1}), \\
			 & \ \leq \beta_\Omega \left( \|\nabla \mathbf{v}^{n+1}\|  + \|\nabla \overline{\mu}^{n+1}\| \right) \|\nabla d_{2t} \phi^{n+1}\|.
		\end{aligned}
	\end{equation}
	Here, $\beta_\Omega = C_\Omega \max\limits_{0 \leq n \leq N_t} \{ \|\phi^n\|_\infty + \|\nabla \phi^{n}\|_\infty + \|\mathbf{v}^n\|_\infty)$, and $C_\Omega$ is a coefficient with respect to the area of domain. Combing \eqref{eq:chns-es-04} and \eqref{eq:chns-es-05} gives us
	\begin{equation} \label{eq:chns-es-06}
		\begin{aligned}
			\omega \|d_{2t} \phi^{n+1}\|^2 & \leq 2\tau \omega (M + \beta_\Omega) ( \|\nabla \overline{\mu}^{n+1}\|  + \|\nabla \mathbf{v}^{n+1}\|) \|\nabla d_{2t} \phi^{n+1}\|                                                                                      \\
			                               & \leq \tfrac{2\tau \omega^2 \epsilon}{\gamma} \tfrac{(M + \beta_\Omega)^2}{MA}  ( \|\nabla \overline{\mu}^{n+1}\|^2 + \|\nabla \mathbf{v}^{n+1}\|^2) + \tfrac{\gamma}{\varepsilon} \tau MA \|\nabla d_{2t} \phi^{n+1}\|^2
		\end{aligned}
	\end{equation}
	Defining $\omega = \min\limits \{ 1, \sqrt{\nu} \} \tfrac{M \gamma}{\varepsilon(M + \beta_\Omega)} \sqrt{A}$ and then summing \eqref{eq:chns-es-05} and \eqref{eq:chns-es-E1} while invoking Lemma \ref{lem:difference-identity}, we deduce
	\begin{equation*}
		\begin{aligned}
			 & d_t \tilde{E}_1 + \tfrac{\gamma}{\varepsilon}\left(2 \min \{1, \sqrt{\nu}\} \tfrac{M \sqrt{A}}{M + \beta_\Omega} + 3 L\right) d_t \|d_t \phi^{n+1}\|^2 \leq - \tfrac{2\gamma}{\varepsilon} ( 2 \min \{1, \sqrt{\nu}\} \tfrac{M \sqrt{A}}{M + \beta_\Omega} - 3 L).
		\end{aligned}
	\end{equation*}
	The proof is thus completed.
\end{proof}

We now detail the implementation of the SGE-SBDF2 scheme. Through straightforward computations, one can express $\mathbf{v}^{n+1}, \phi^{n+1}, \overline{\mu}^{n+1}$ as follows
\begin{equation*}
	\begin{aligned}
		 & \mathbf{v}^{n+1} = \mathbf{v}_1^{n+1} - \xi^{n+1} \mathbf{v}_2^{n+1} + \eta^{n+1} \mathbf{v}_3^{n+1},                 \\
		 & \phi^{n+1} = \phi_1^{n+1} - \xi^{n+1} \phi_2^{n+1} + \eta^{n+1} \phi_3^{n+1}             ,                            \\
		 & \overline{\mu}^{n+1} = \overline{\mu}_1^{n+1} - \xi^{n+1} \overline{\mu}_2^{n+1} + \eta^{n+1} \overline{\mu}_3^{n+1},
	\end{aligned}
\end{equation*}
where we define $\xi^{n+1} = (\hat{\mathbf{g}}^{n+1}, \mathbf{g}^{n+1})$, $\eta^{n+1} = (\hat{\mathbf{c}}^{n+1}, \mathbf{g}^{n+1})$. Moreover, $\phi_i^{n+1}$ and $\mathbf{v}_i^{n+1}$ denote solutions of the following decoupled and linear CH-NS systems
\begin{equation*}
	\left\lbrace
	\begin{aligned}
		 & (\tfrac{3}{2\tau} - \tfrac{\nu}{\rho} \Delta )\mathbf{v}_i^{n+1} + \tfrac{1}{\rho} \nabla p_i^{n+1} = \mathbf{f}_{i, \mathbf{v}},   i=1,2,3, \\
		 & \nabla \mathbf{v}_i^{n+1} = 0,                                                                                                               \\
		 & ( \tfrac{3}{2\tau} + \tfrac{3\gamma}{\varepsilon} \tau M^2 A \Delta^2 + \gamma \varepsilon M \Delta^2) \phi_i^{n+1} = \mathbf{f}_{i, \phi},
	\end{aligned}
	\right.
\end{equation*}
with $\mathbf{f}_{2, \mathbf{v}} = \hat{\mathbf{c}}_\mathbf{v}^{n+1},\ \mathbf{f}_{2, \phi} = \hat{\mathbf{c}}_\phi^{n+1},\ \mathbf{f}_{3, \mathbf{v}} = \hat{\mathbf{g}}_\mathbf{v}^{n+1},\ \mathbf{f}_{3, \phi} = \hat{\mathbf{g}}_\phi^{n+1}$, and
\begin{equation}
	\begin{aligned}
		 & \mathbf{f}_{1, \mathbf{v}} = \tfrac{4\mathbf{v}^n - \mathbf{v}^{n-1}}{2\tau},                                                                                                           \\
		 & \mathbf{f}_{1, \phi} = (\tfrac{2}{\tau} + \tfrac{4\gamma}{\varepsilon} \tau M^2 A  \Delta^2) \phi^n - ( \tfrac{1}{2\tau} + \tfrac{\gamma}{\varepsilon} \tau M^2 A \Delta^2  )\phi^{n-1} \\
		 & \ + \tfrac{\gamma}{\varepsilon} M \Delta (2 f^\prime(\phi^n) - f^\prime(\phi^{n-1}) ).                                                                                                  \\
	\end{aligned}
\end{equation}
Moreover, $\overline{\mu}_i^{n+1} = \mu_i^{n+1} - \tfrac{1}{|\Omega|} \int_\Omega \mu_i^{n+1} d\mathbf{x}$, with
\begin{equation*}
	\begin{aligned}
		 & \mu_1^{n+1} = - \gamma \varepsilon \Delta \phi_1^{n+1} - \tfrac{\gamma}{\varepsilon} \tau M A \Delta(3\phi_1 - 4\phi^n + \phi^{n-1}) + \tfrac{\gamma}{\varepsilon} (2f^\prime(\phi^n) - f^\prime(\phi^{n-1})),                             \\
		 & \mu_2^{n+1} = - \gamma \varepsilon \Delta \phi_2^{n+1} - \tfrac{3\gamma}{\varepsilon} \tau M A \Delta \phi_2^{n+1}, \ \mu_3^{n+1} = - \gamma \varepsilon \Delta \phi_3^{n+1} - \tfrac{3\gamma}{\varepsilon} \tau M A \Delta \phi_3^{n+1} .
	\end{aligned}
\end{equation*}
After the above preparations, the coefficients $\xi^{n+1}$ and $\eta^{n+1}$ are computed by solving a 2-by-2 linear system discussed earlier.

We consider the CH-NS equations on the unit square domain and construct the following exact solution by incorporating additional source terms:
\begin{equation*}
	\left\lbrace
	\begin{aligned}
		 & u(x, y, t) = 0.5 \cos{(t^2)} \left( -\cos{(2 \pi x)} \sin{(2 \pi y)} + \sin{(2 \pi y)} \right), \\
		 & v(x, y, t) = 0.5 \cos{(t^2)} \left(\sin{(2 \pi x)} \cos{(2 \pi y)} - \sin{(2 \pi x)}\right),    \\
		 & p(x, y, t) = 0.5 \cos{(t^2)} \cos{(2 \pi x)} \cos{(2 \pi y)},                                   \\
		 & \phi(x, y, t) = 0.5 \cos{(t^2)} \cos{(4 \pi x)} \cos{(4 \pi y)}.                                \\
	\end{aligned}
	\right.
\end{equation*}
In our tests, we set $\gamma = \varepsilon = M  = 1$. The stabilization parameter is set to $A = 3$.
\begin{table}[htbp]
	\caption{Error and convergence rate of the SGE-SBDF2 and SGE-SCN schemes for solving the CH-NS equations.} \label{tab:convergence-chns}
	\centering
	\resizebox{\linewidth}{!}{%
		\begin{tabular}{ccccccccc}
			\toprule
			\multirow{2}{*}{$(h, \tau)$}                   & \multicolumn{4}{c}{SGE-SBDF2} & \multicolumn{4}{c}{SGE-SCN}                                                                                                                   \\
			\cmidrule(lr){2-5} \cmidrule(lr){6-9}
			                                               & {Error}$_\mathbf{v}$          & {Order}$_\mathbf{v}$        & {Error}$_\phi$ & {Order}$_\phi$ & {Error}$_\mathbf{v}$ & {Order}$_\mathbf{v}$ & {Error}$_\phi$ & {Order}$_\phi$ \\
			\midrule
			$\left(\tfrac{1}{128}, \tfrac{1}{20}\right)$   & 5.2541e-05                    & $\star$                     & 2.3129e-03     & $\star$        & 2.2774e-03           & $\star$              & 1.1415e-03     & $\star$        \\
			$\left(\tfrac{1}{256}, \tfrac{1}{40}\right)$   & 1.4673e-05                    & 1.8403                      & 5.8260e-04     & 1.9891         & 5.6818e-04           & 2.0030               & 2.8410e-04     & 2.0065         \\
			$\left(\tfrac{1}{512}, \tfrac{1}{80}\right)$   & 3.8751e-06                    & 1.9209                      & 1.4591e-04     & 1.9974         & 1.4199e-04           & 2.0006               & 7.0931e-05     & 2.0019         \\
			$\left(\tfrac{1}{1024}, \tfrac{1}{160}\right)$ & 9.9541e-07                    & 1.9609                      & 3.6494e-05     & 1.9999         & 3.5498e-05           & 2.0000               & 1.7726e-05     & 2.0005         \\
			\bottomrule
		\end{tabular}
	}
\end{table}
As shown in Table \ref{tab:convergence-chns}, the refinement test confirms that both the SGE-SBDF2 and SGE-SCN schemes achieve second-order accuracy. Additionally, the SGE-SBDF2 scheme is more accurate than the SGE-SCN scheme.

Next, we simulate the merging of two bubbles driven by surface tension on a unit square domain. The initial velocity is set to zero, and the initial phase field $\phi$ is prescribed as
\begin{equation*}
	\phi(x, y, 0) = 1 - \tanh \tfrac{-r + \sqrt{(x - x_a)^2 + (y - y_a)^2}}{2\varepsilon} - \tanh \tfrac{-r + \sqrt{(x - x_b)^2 + (y - y_b)^2}}{2\varepsilon},
\end{equation*}
with $x_a = 0.5 - \tfrac{r}{\sqrt{2}}, y_a = 0.5 + \tfrac{r}{\sqrt{2}}, x_b = 0.5 + \tfrac{r}{\sqrt{2}}, y_b = 0.5 - \tfrac{r}{\sqrt{2}}$ and $r = 0.15$. The CH-NS equation is solved with $\rho = 1$, $M = 0.01$, $\gamma = 0.01$, and $\varepsilon = 0.01$. We first test the energy stability and mass conservation of the proposed schemes. Using a spatial resolution of $N = 128$ and various time steps, the results are reported in Figure \ref{fig:energy-mass-chns}. The first subplot of Figure \ref{fig:energy-mass-chns} shows the original energy evolution for $\nu = 0.01$, while the second subplot displays it for $\nu = 0.001$. The energy profiles obtained from different methods agree well, even with large time steps. The third subplot confirms that the total mass is preserved to machine accuracy in all cases.
\begin{figure}[htbp]
	\begin{center}
		\includegraphics[width=0.3\textwidth, height=100pt]{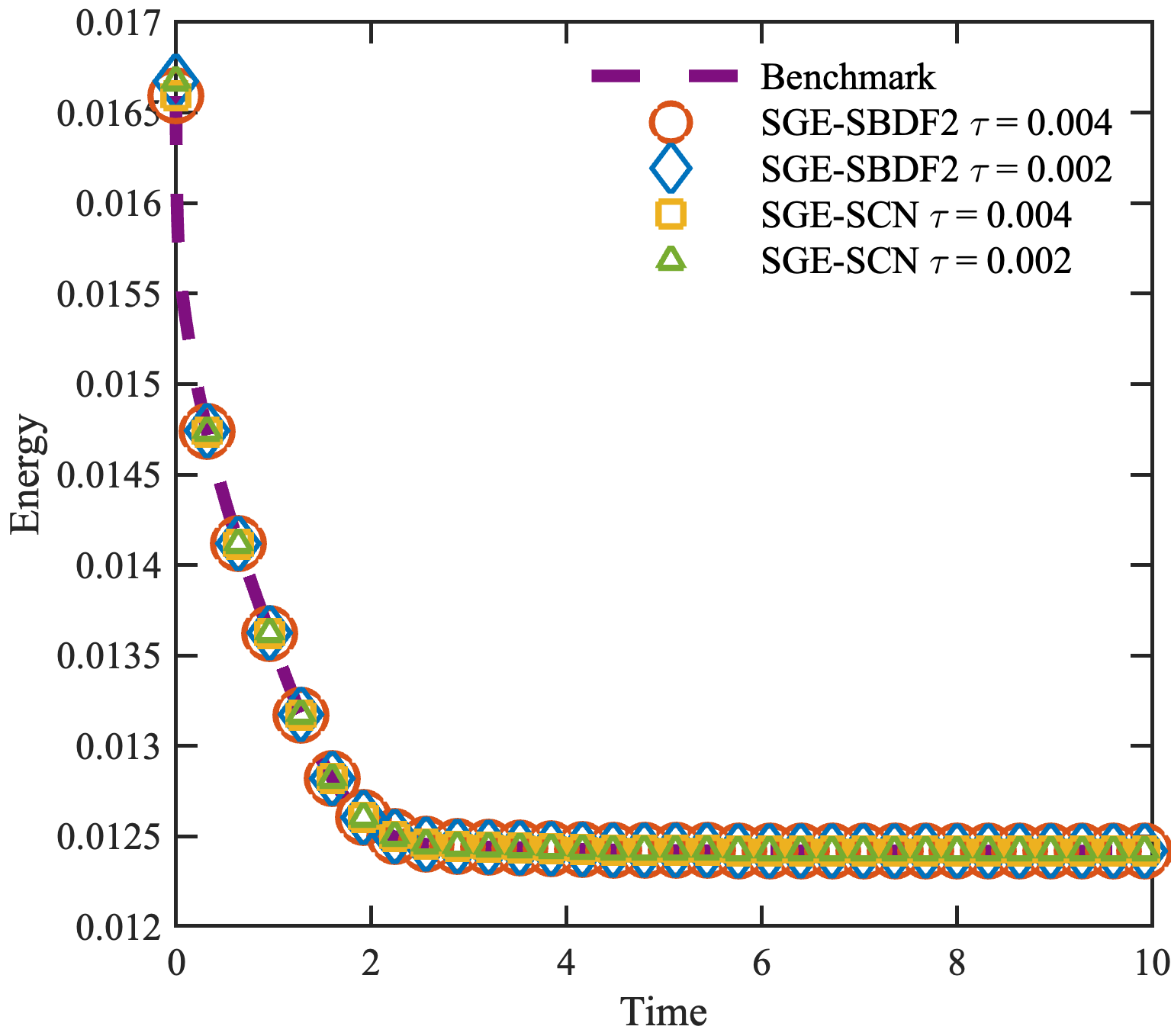}
		\includegraphics[width=0.3\textwidth, height=100pt]{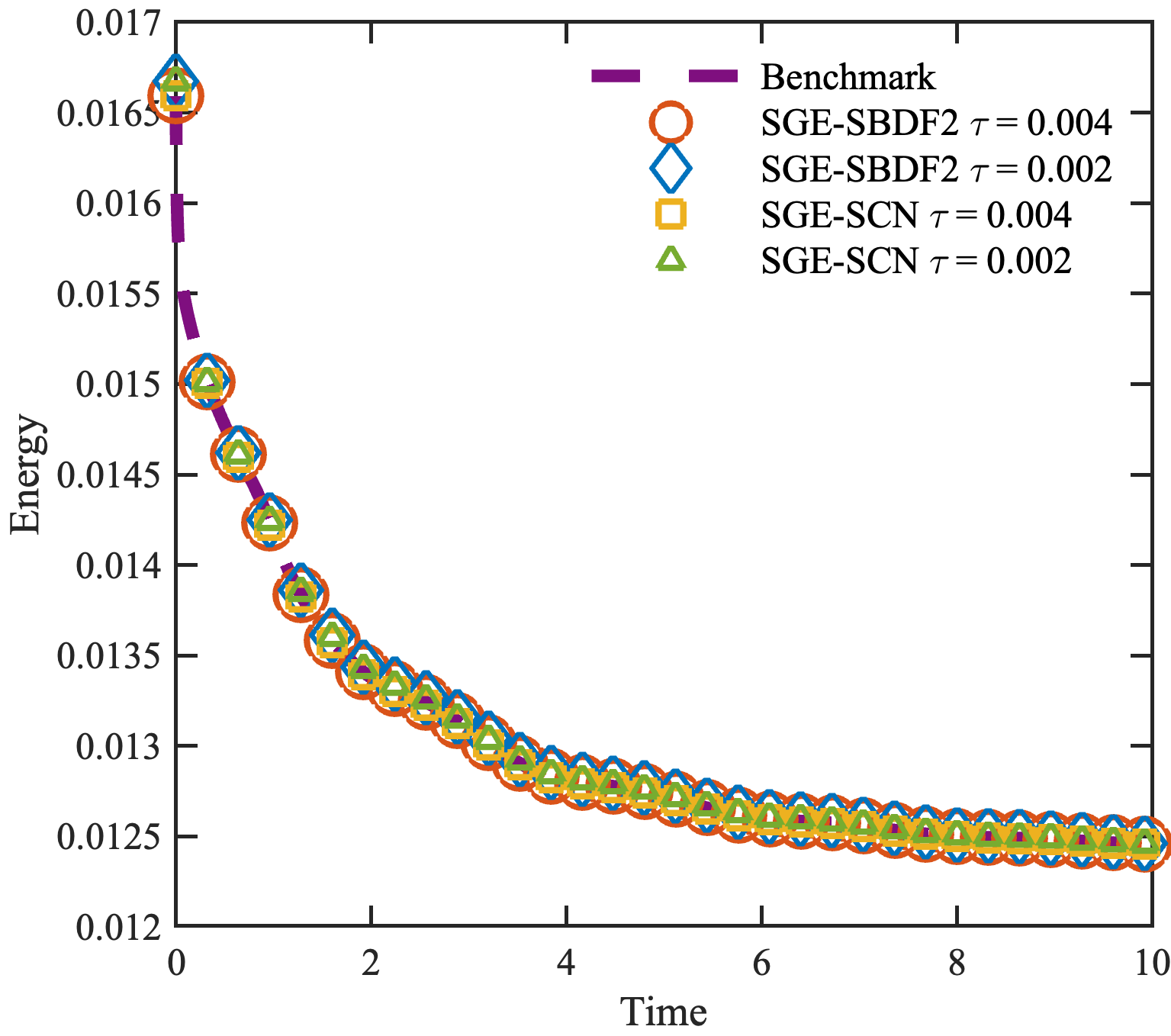}
		\includegraphics[width=0.3\textwidth, height=101pt]{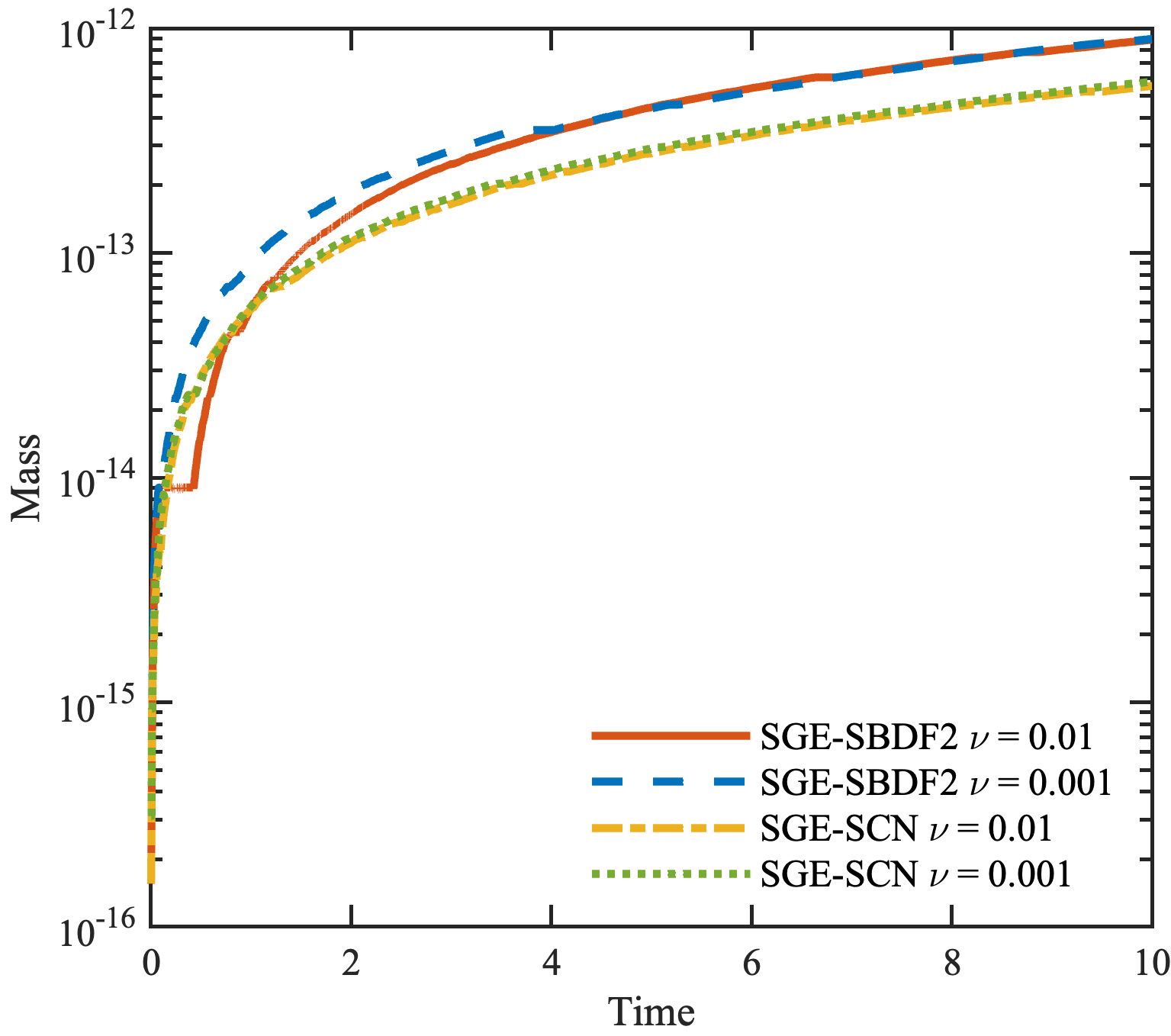}
	\end{center}
	\caption{Energy and mass evolutions for the CH-NS equations}\label{fig:energy-mass-chns}
\end{figure}
Figure \ref{fig:chns-merged-bubble-1} displays snapshots of the phase variable and velocity field at various time instances for $\nu = 0.01$, whereas Figure \ref{fig:chns-merged-bubble-2} presents the corresponding results for $\nu = 0.001$. In both cases, the two bubbles coalesce into a single one with the resulting bubble ultimately attaining a steady circular configuration. Nonetheless, the dynamical behaviors differ markedly between the two scenarios. Under a lower viscosity, inertia effects become significant. Specifically, as depicted in Figure \ref{fig:chns-merged-bubble-2}, the bubble undergoes successive compression and elongation, exhibiting oscillatory damping driven by the interplay between kinetic energy and Helmholtz free energy, before settling into a round bubble. This observation underscores that when inertial effects are prominent, the underlying hydrodynamics as described by the NS equation cannot be neglected.

\begin{figure}[htbp]
	\begin{center}
		\includegraphics[width=0.18\textwidth]{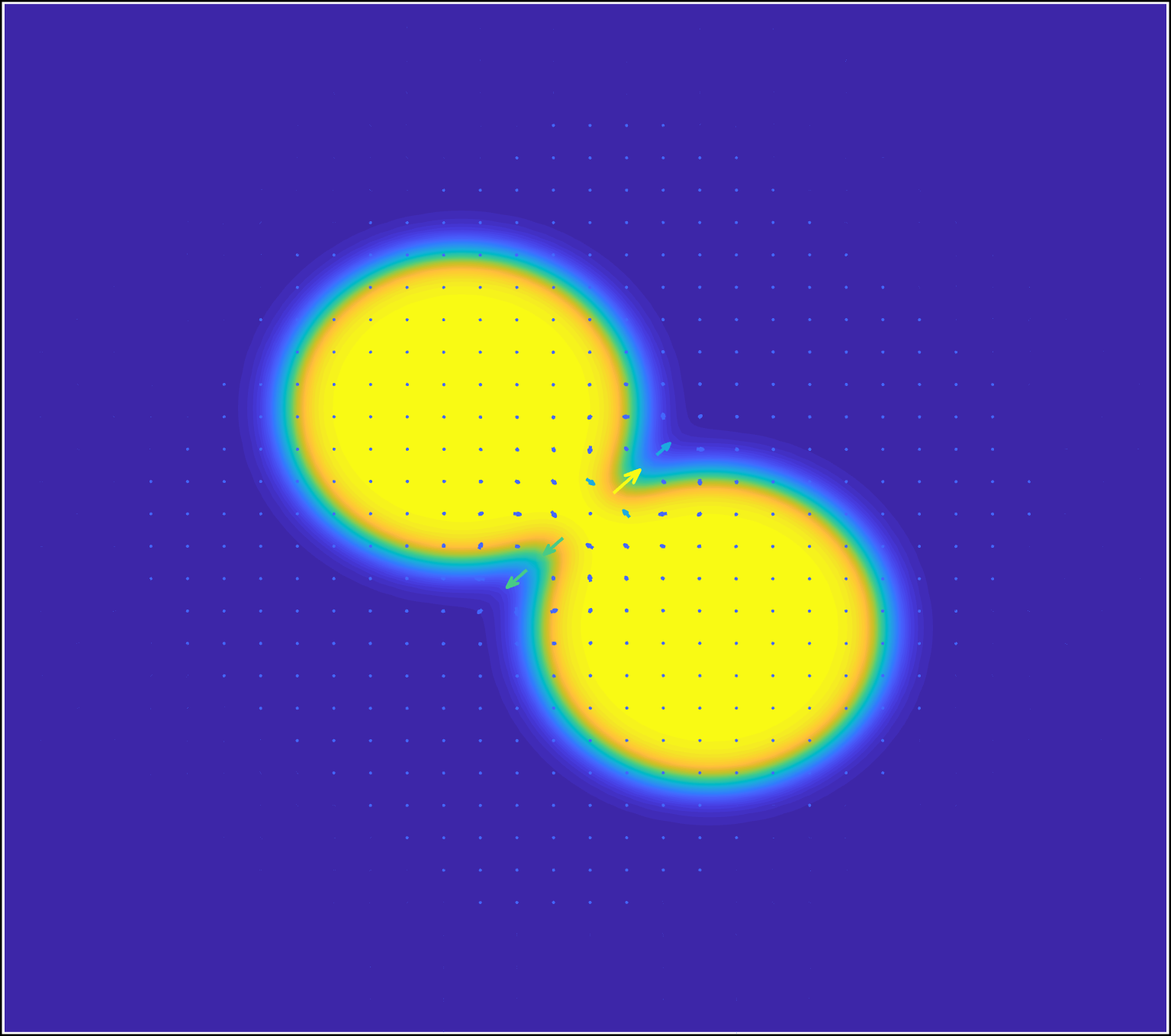}
		\includegraphics[width=0.18\textwidth]{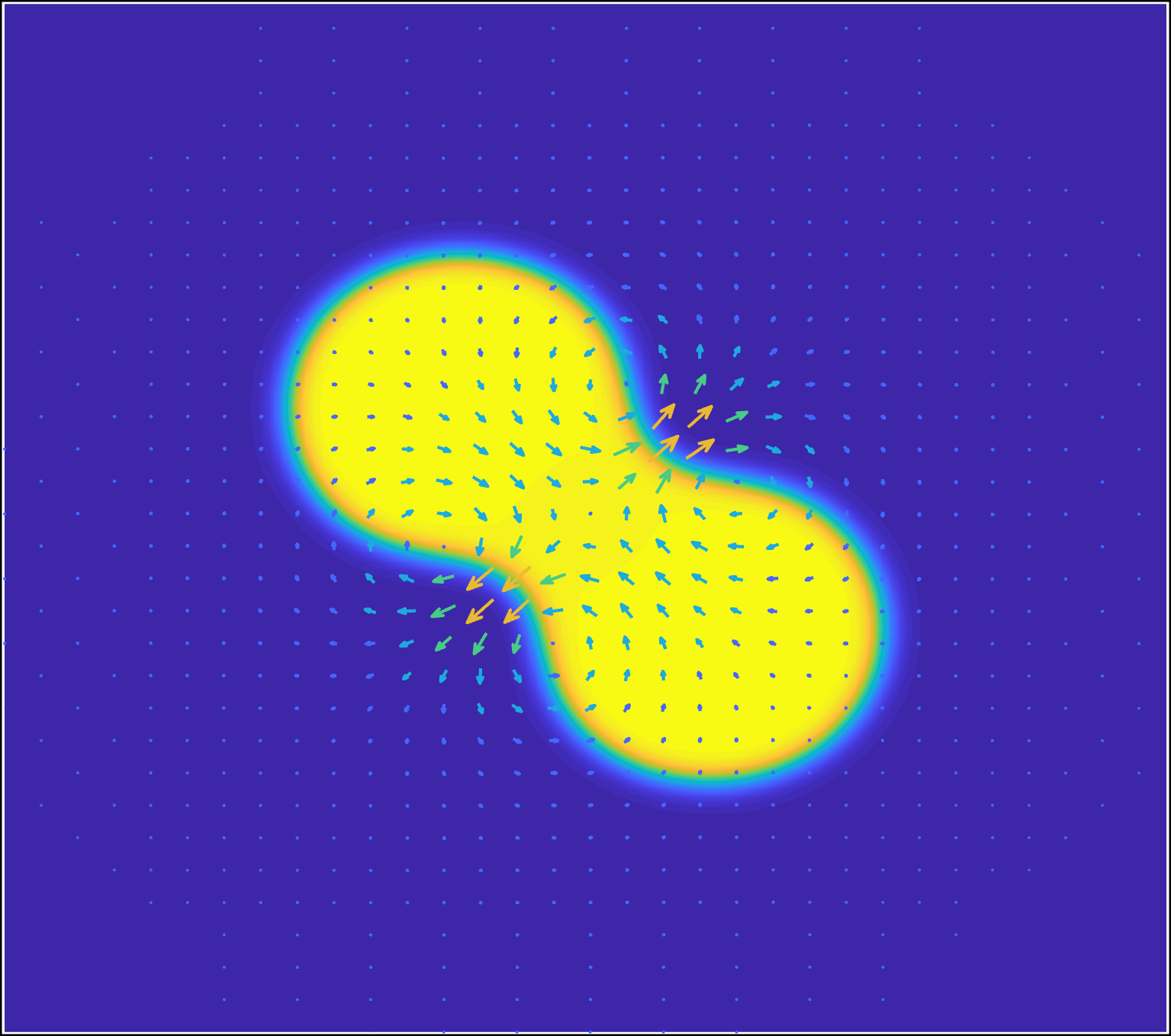}
		\includegraphics[width=0.18\textwidth]{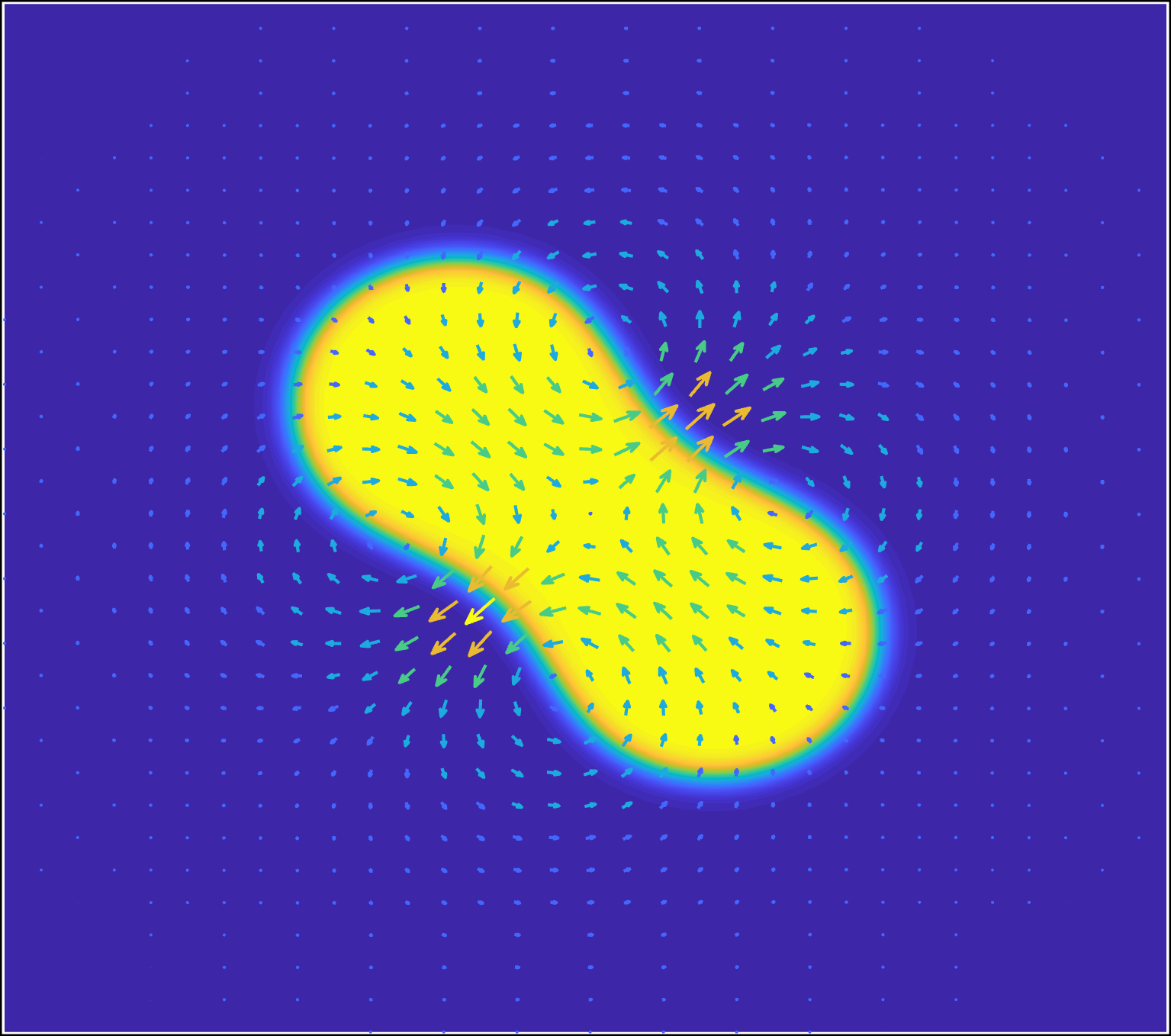}
		\includegraphics[width=0.18\textwidth]{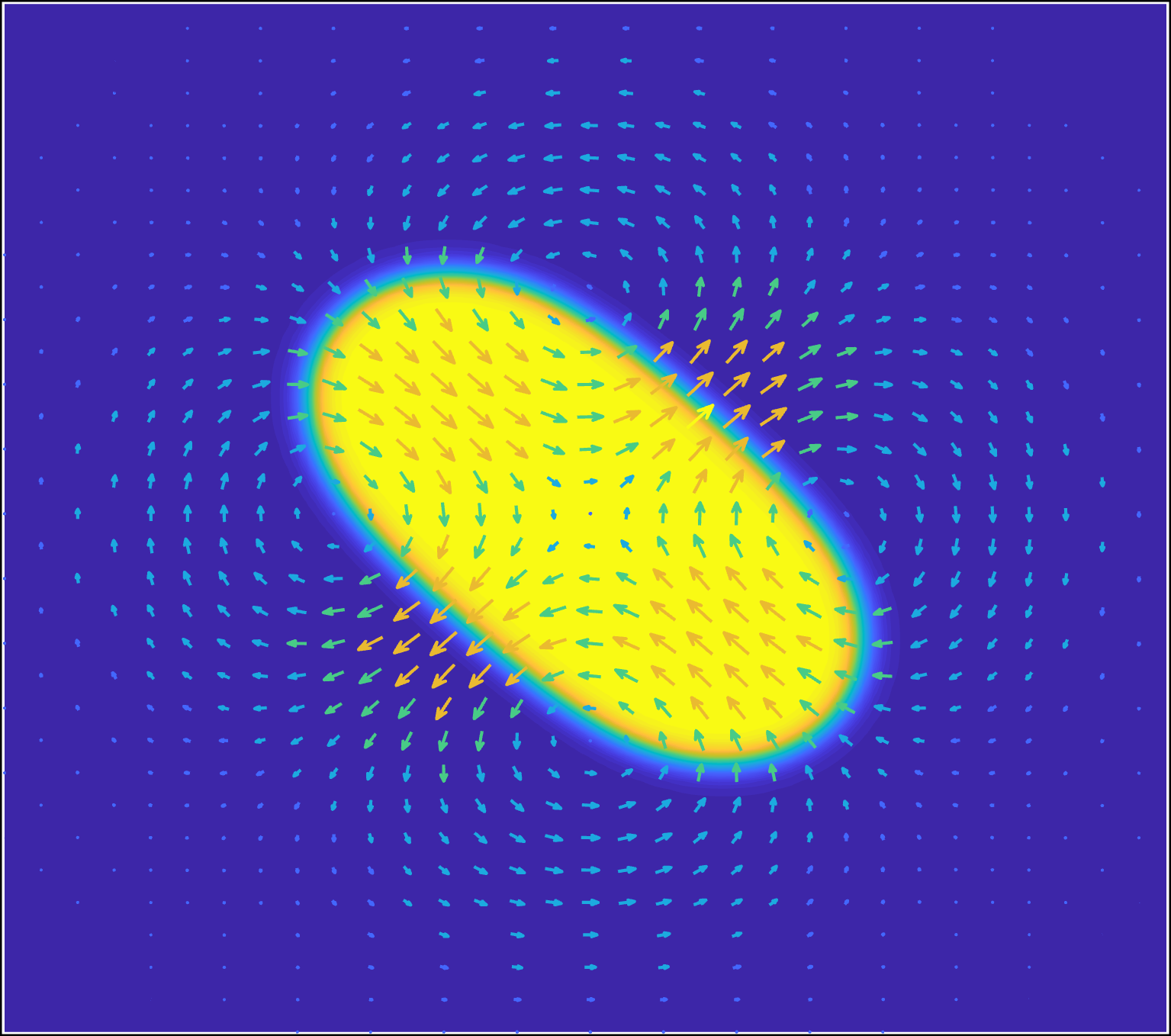}
		\includegraphics[width=0.18\textwidth]{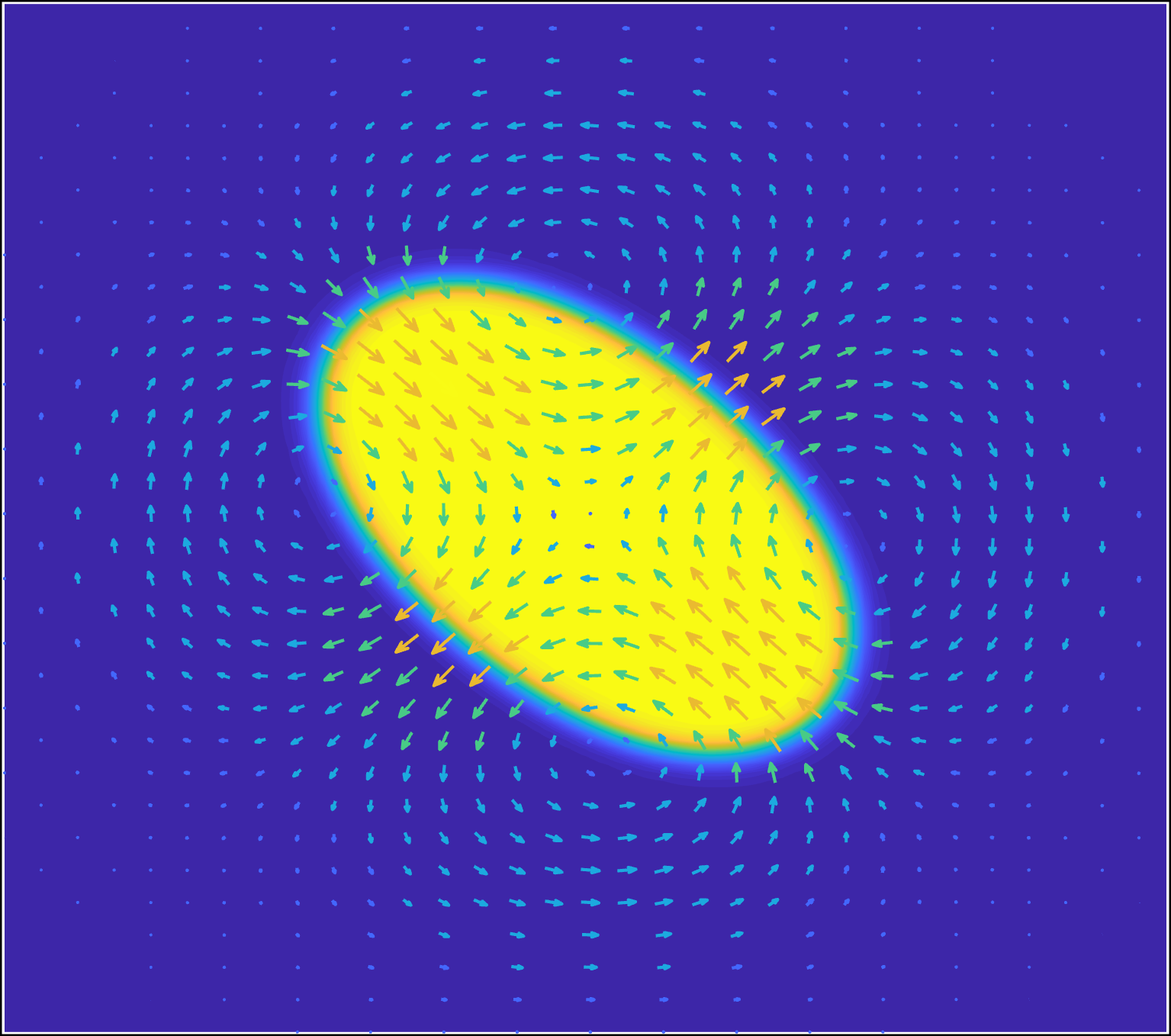}

		\includegraphics[width=0.18\textwidth]{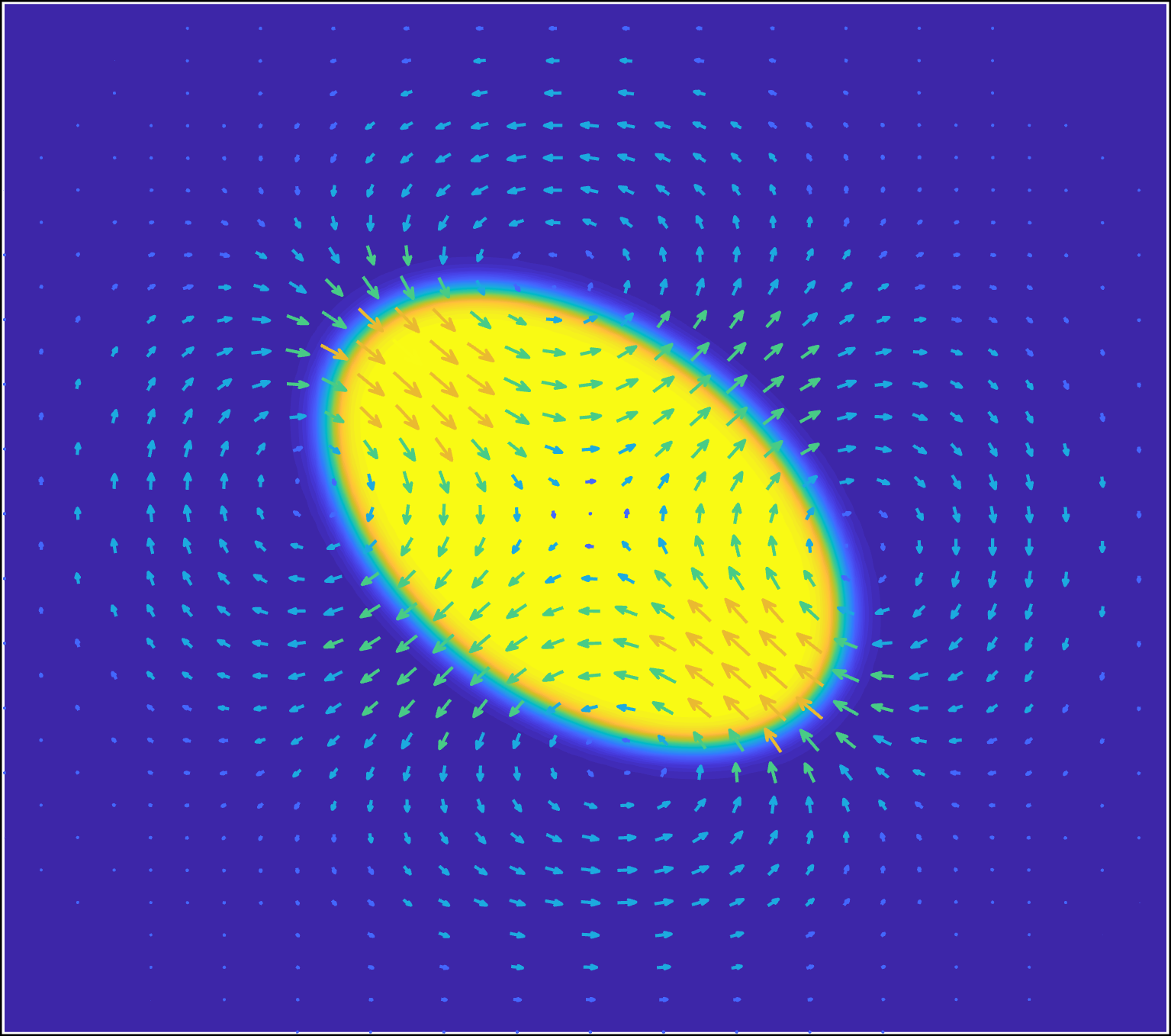}
		\includegraphics[width=0.18\textwidth]{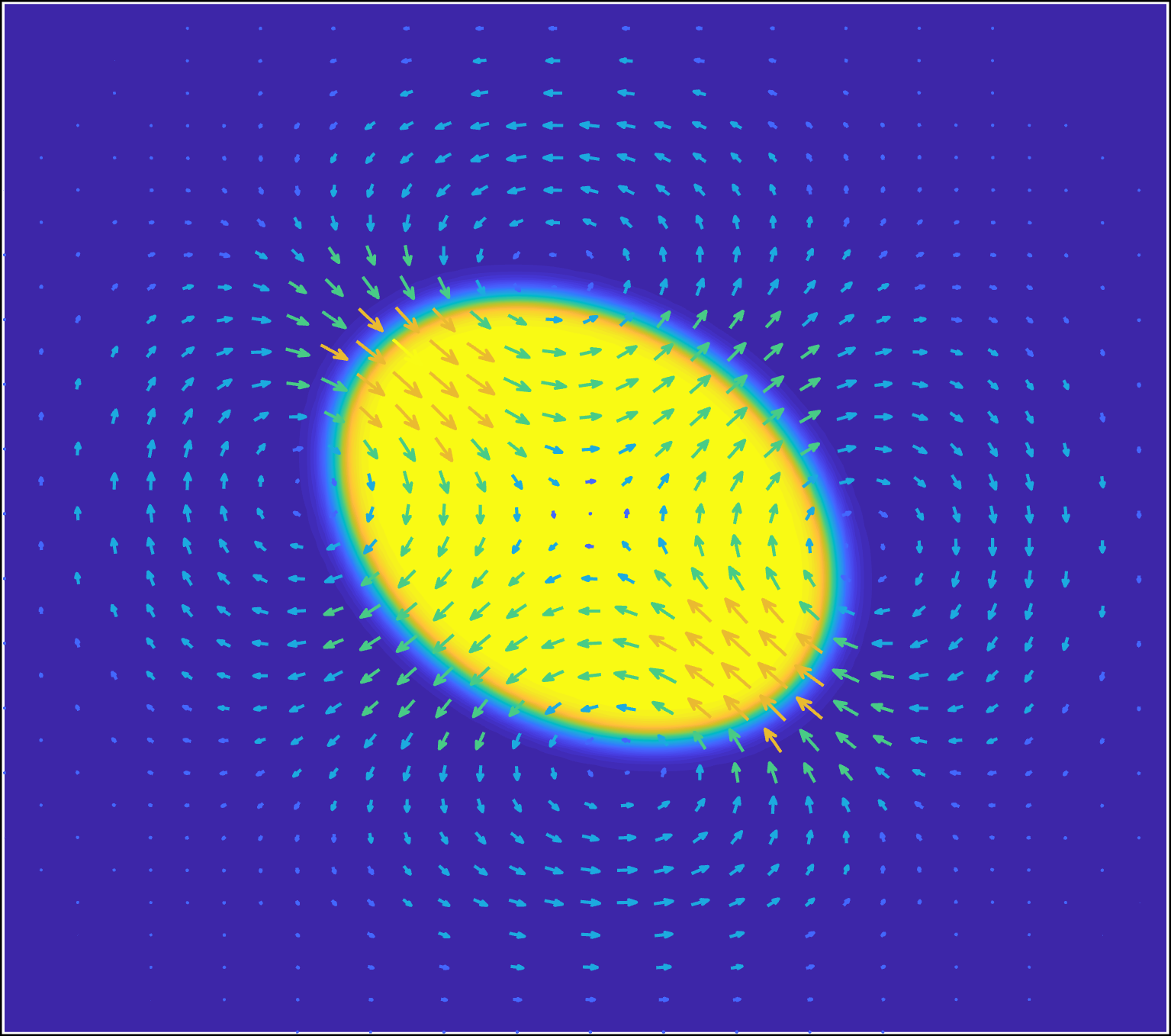}
		\includegraphics[width=0.18\textwidth]{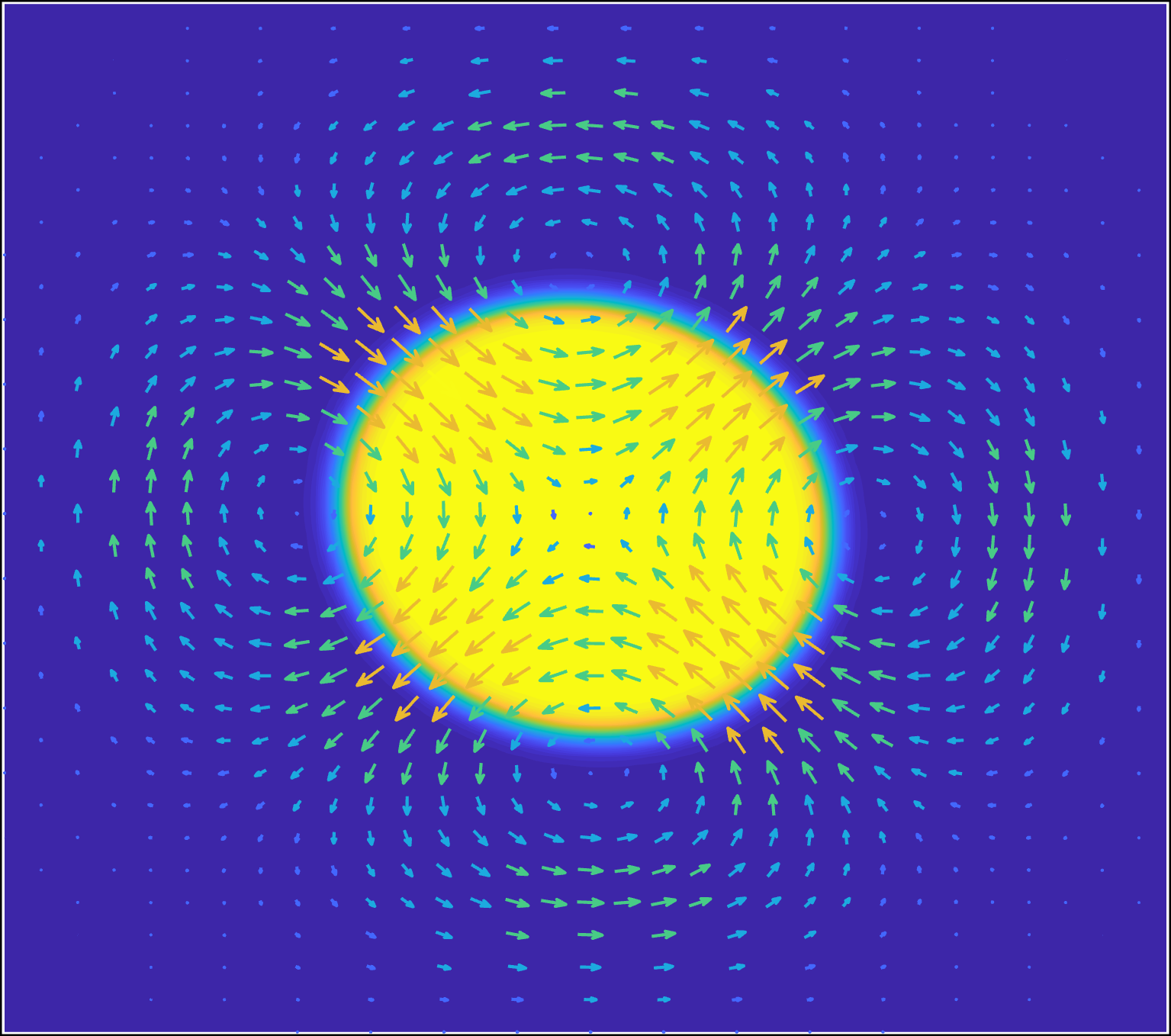}
		\includegraphics[width=0.18\textwidth]{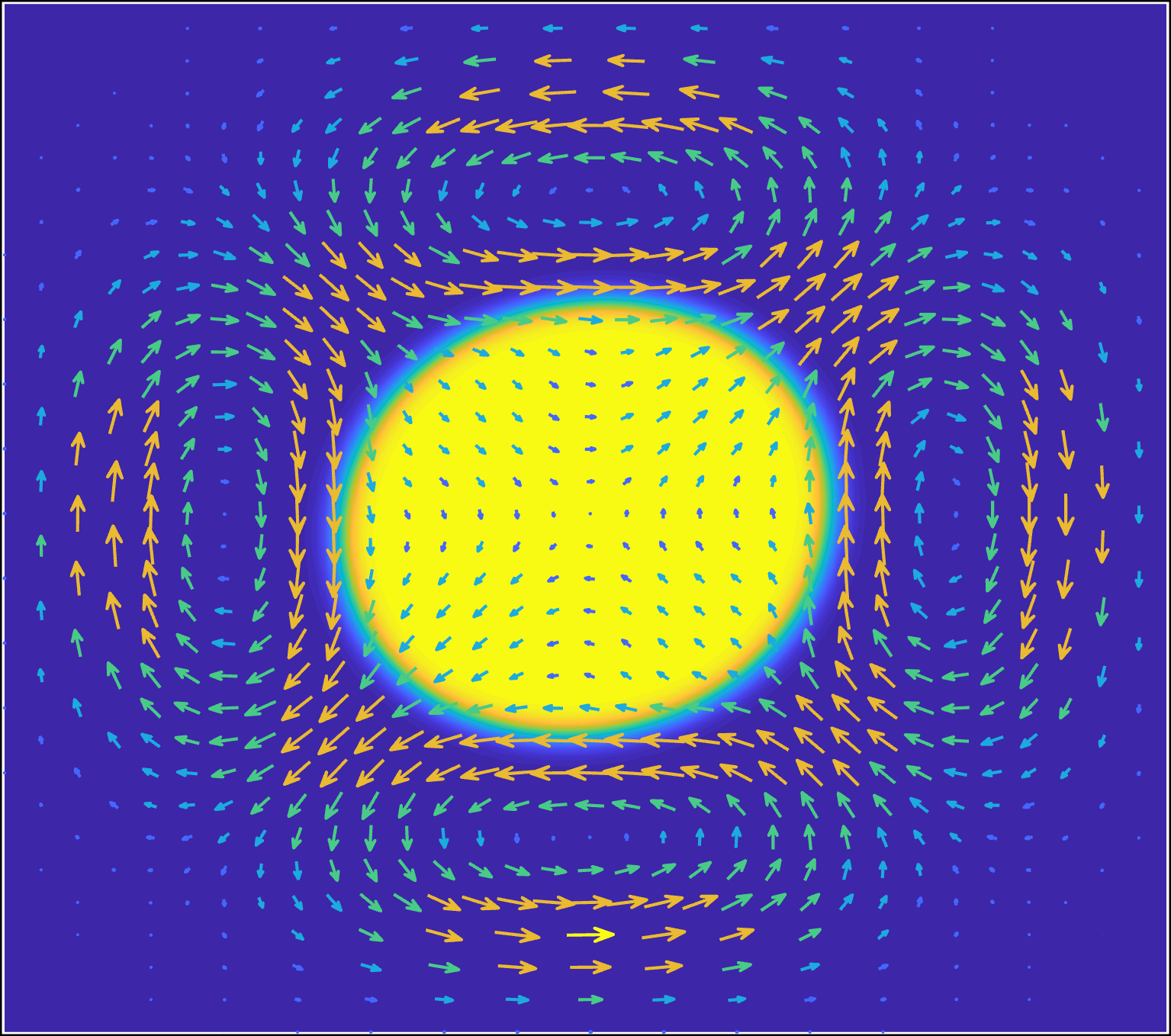}
		\includegraphics[width=0.18\textwidth]{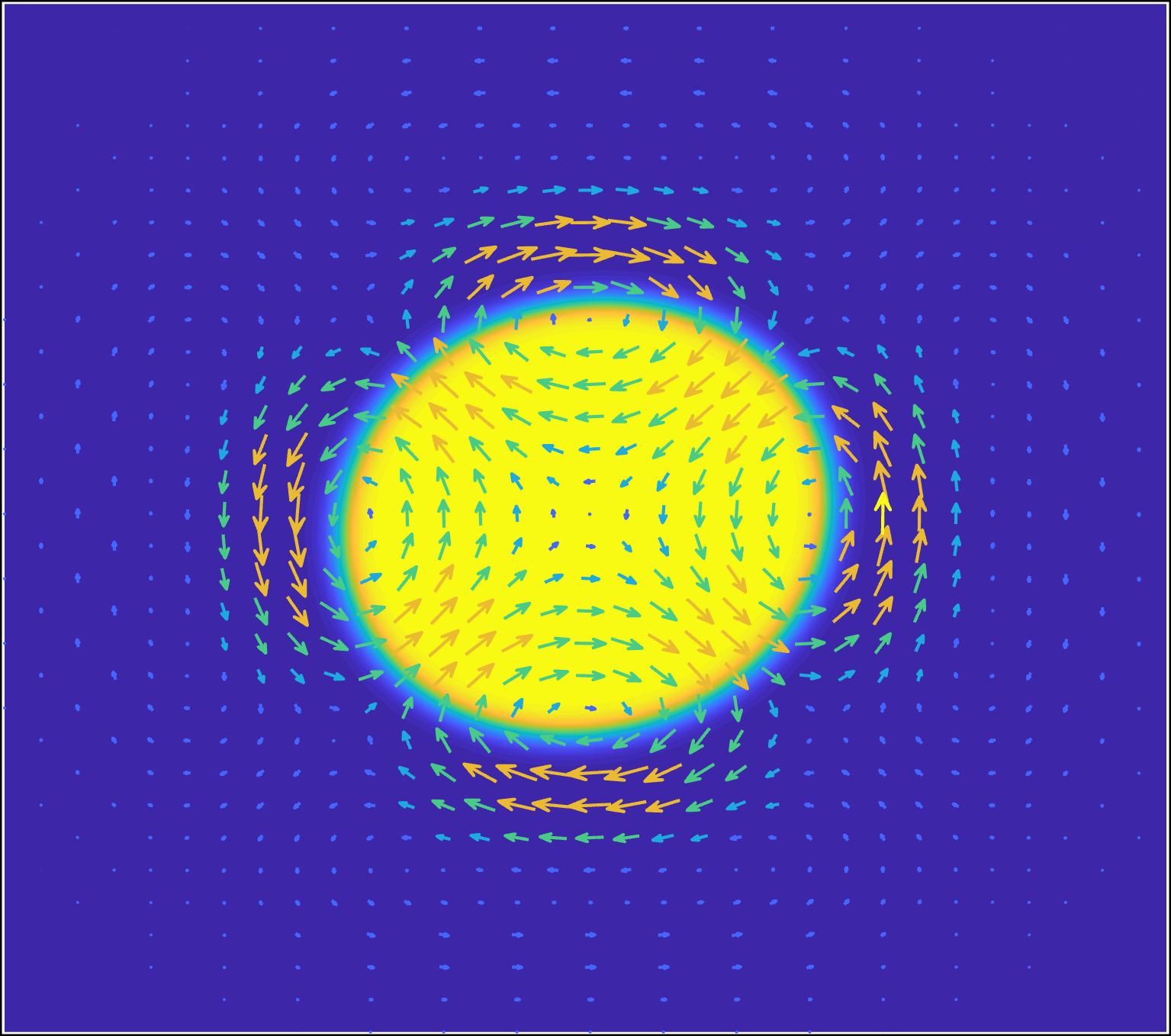}

		\includegraphics[width=0.18\textwidth]{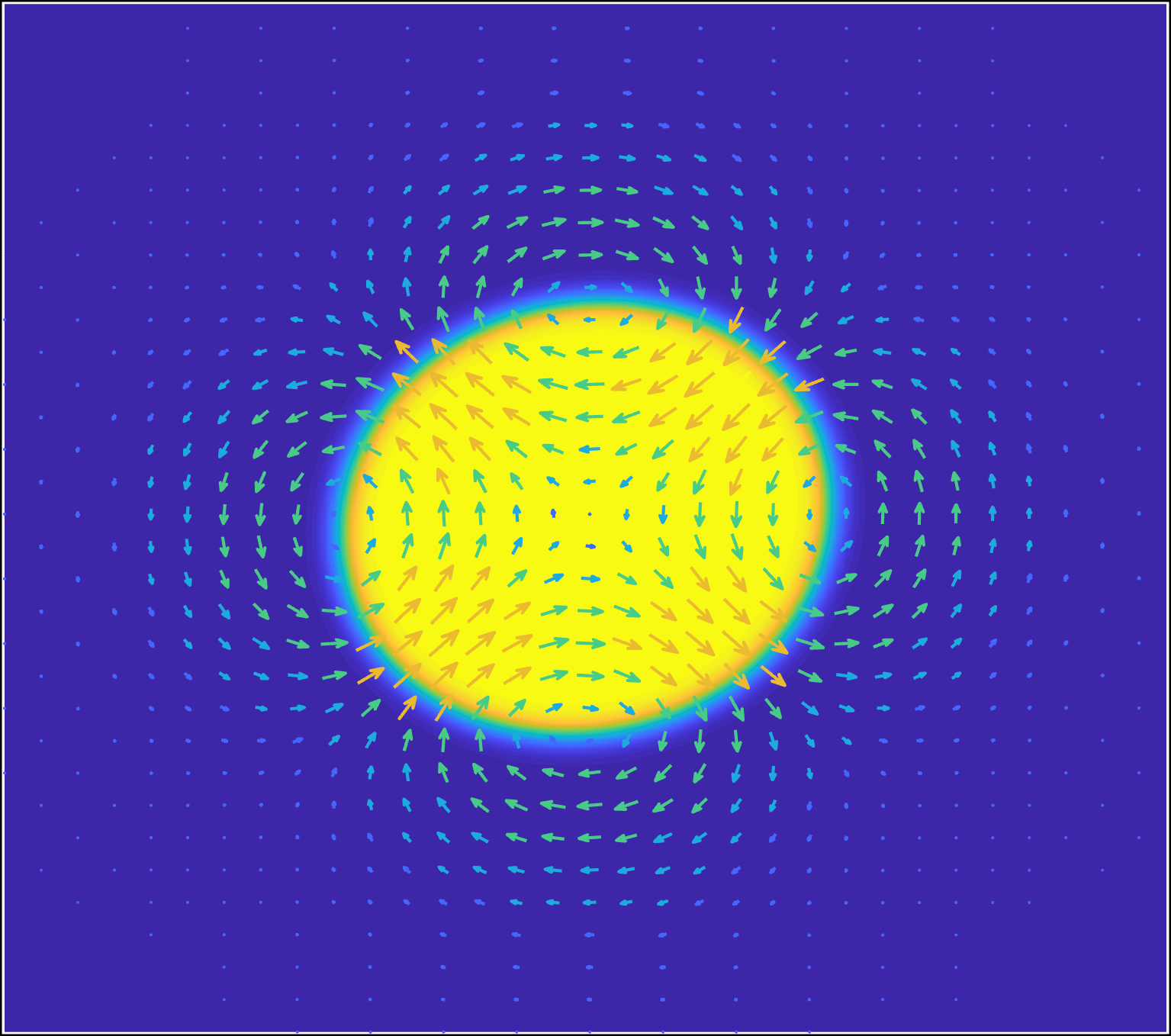}
		\includegraphics[width=0.18\textwidth]{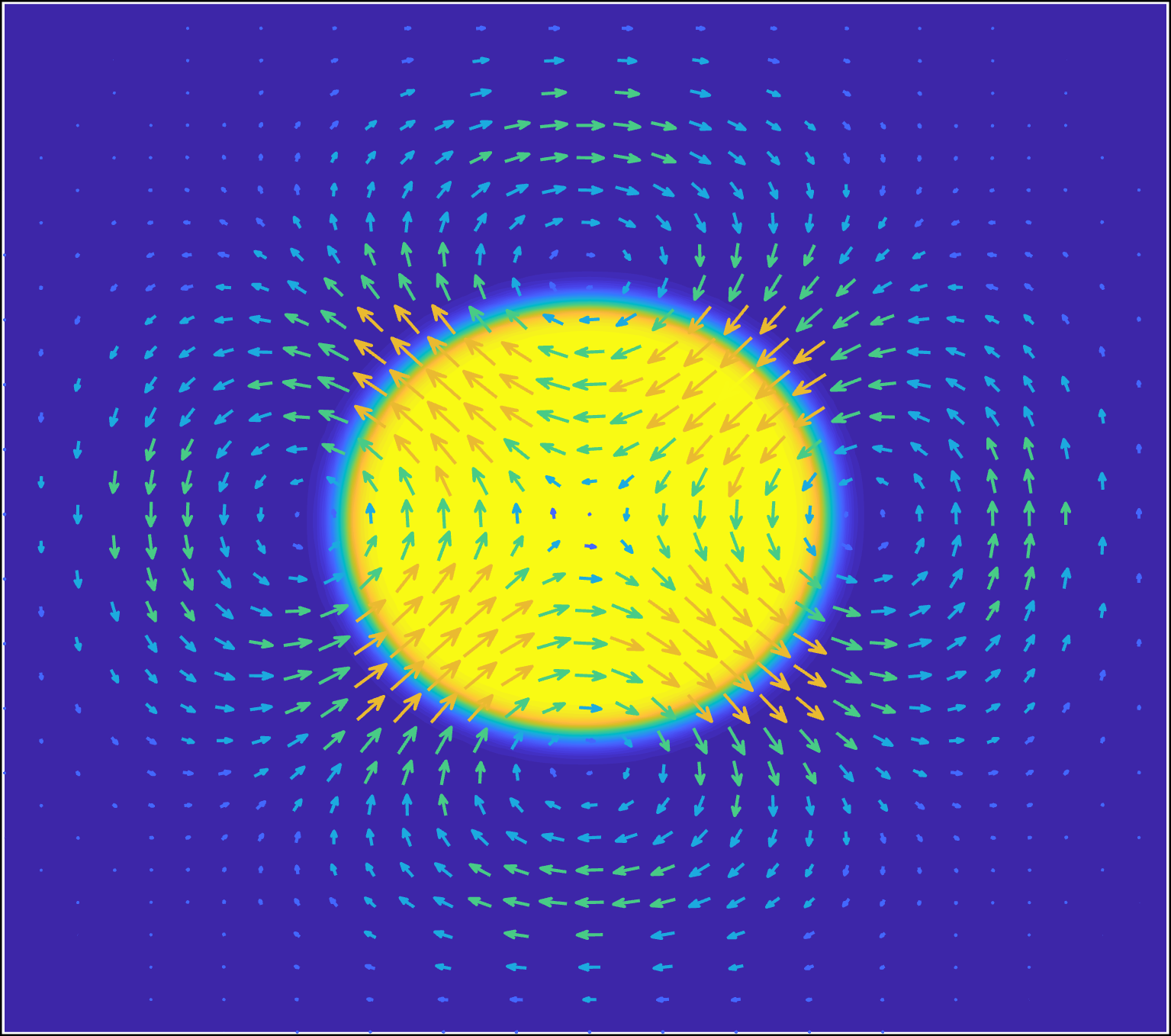}
		\includegraphics[width=0.18\textwidth]{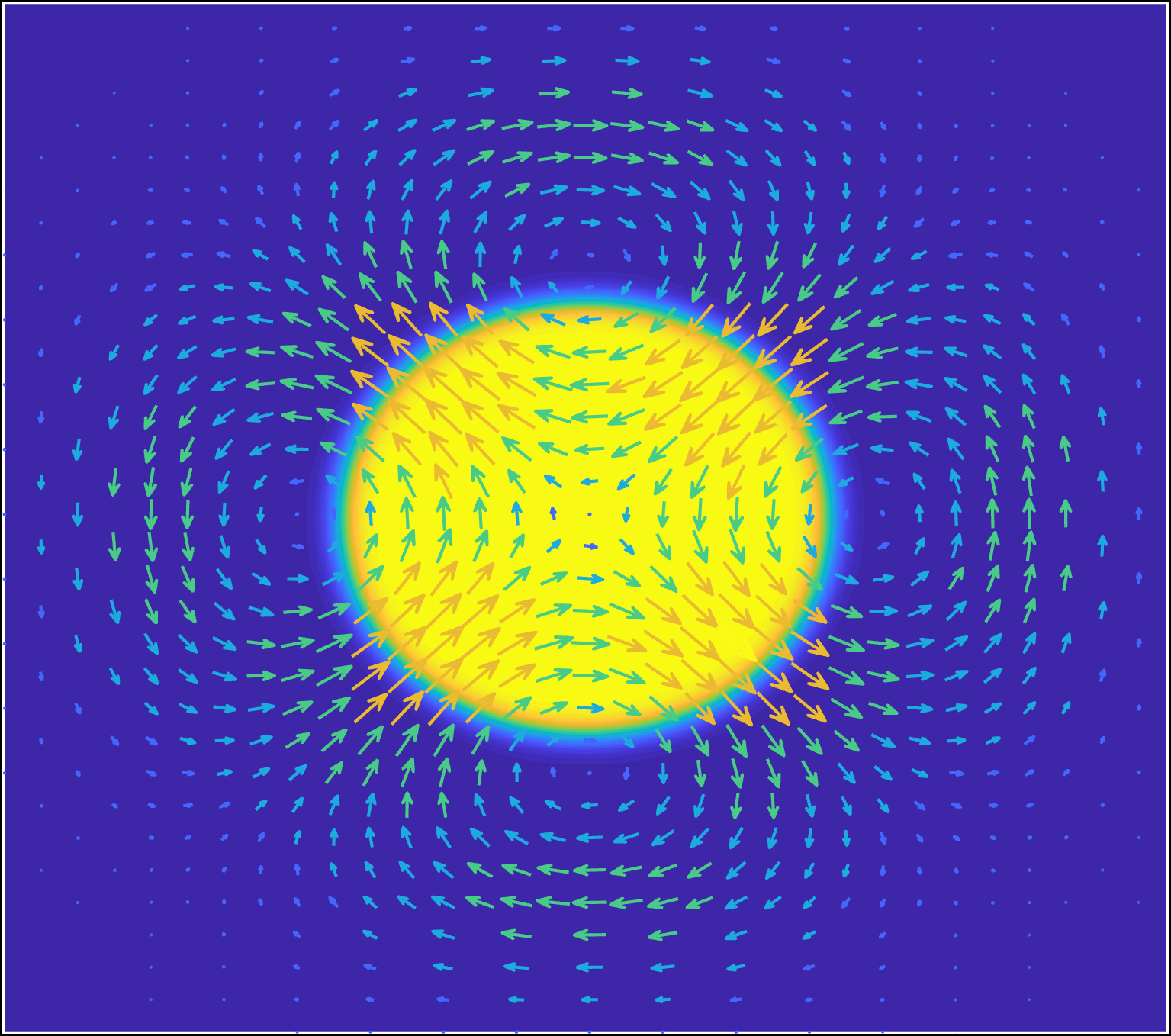}
		\includegraphics[width=0.18\textwidth]{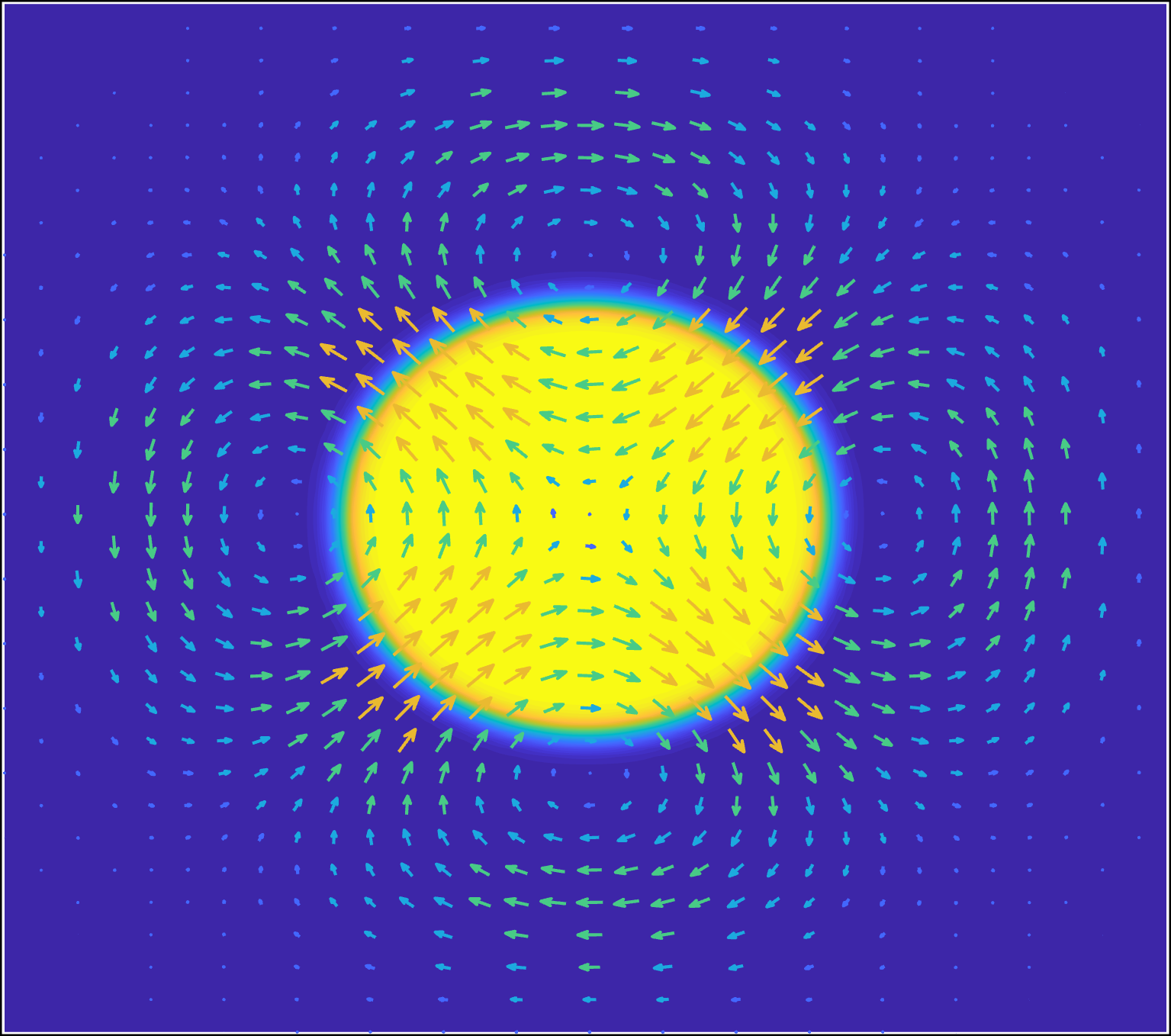}
		\includegraphics[width=0.18\textwidth]{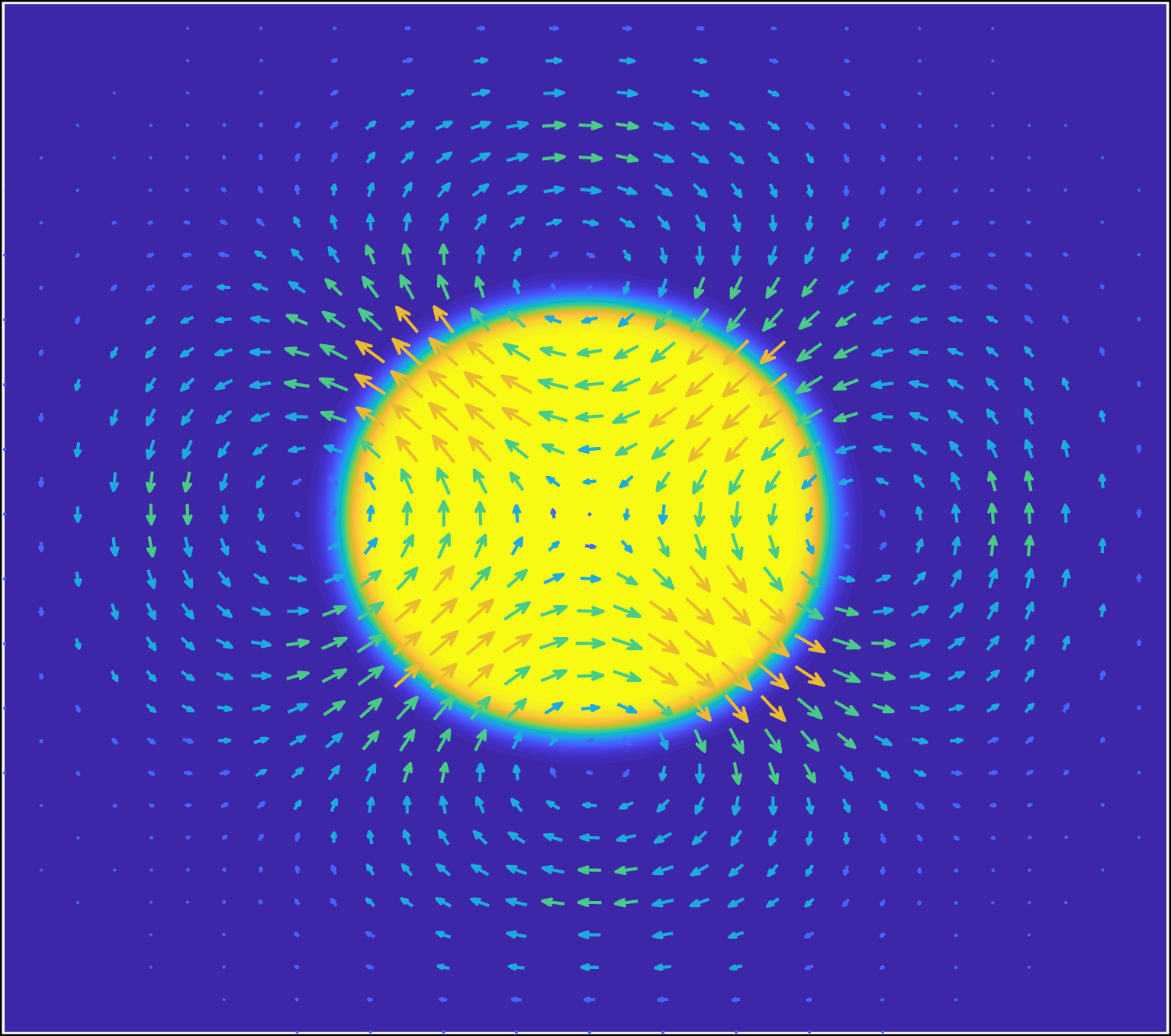}
	\end{center}
	\caption{Snapshots of the phase field $\phi$ and the corresponding velocity for the merged bubble with $\nu=0.01$ at selected time instances. The top row shows the evolution at $t = 0.001, 0.2, 0.4, 0.8, 1.0$; the middle row at $t = 1.2, 1.5, 2.0, 3.0, 3.2$; and the bottom row at $t = 3.4, 4.6, 4.8, 5.0, 10.0$.}\label{fig:chns-merged-bubble-1}
\end{figure}

\begin{figure}[htbp]
	\begin{center}
		\includegraphics[width=0.18\textwidth]{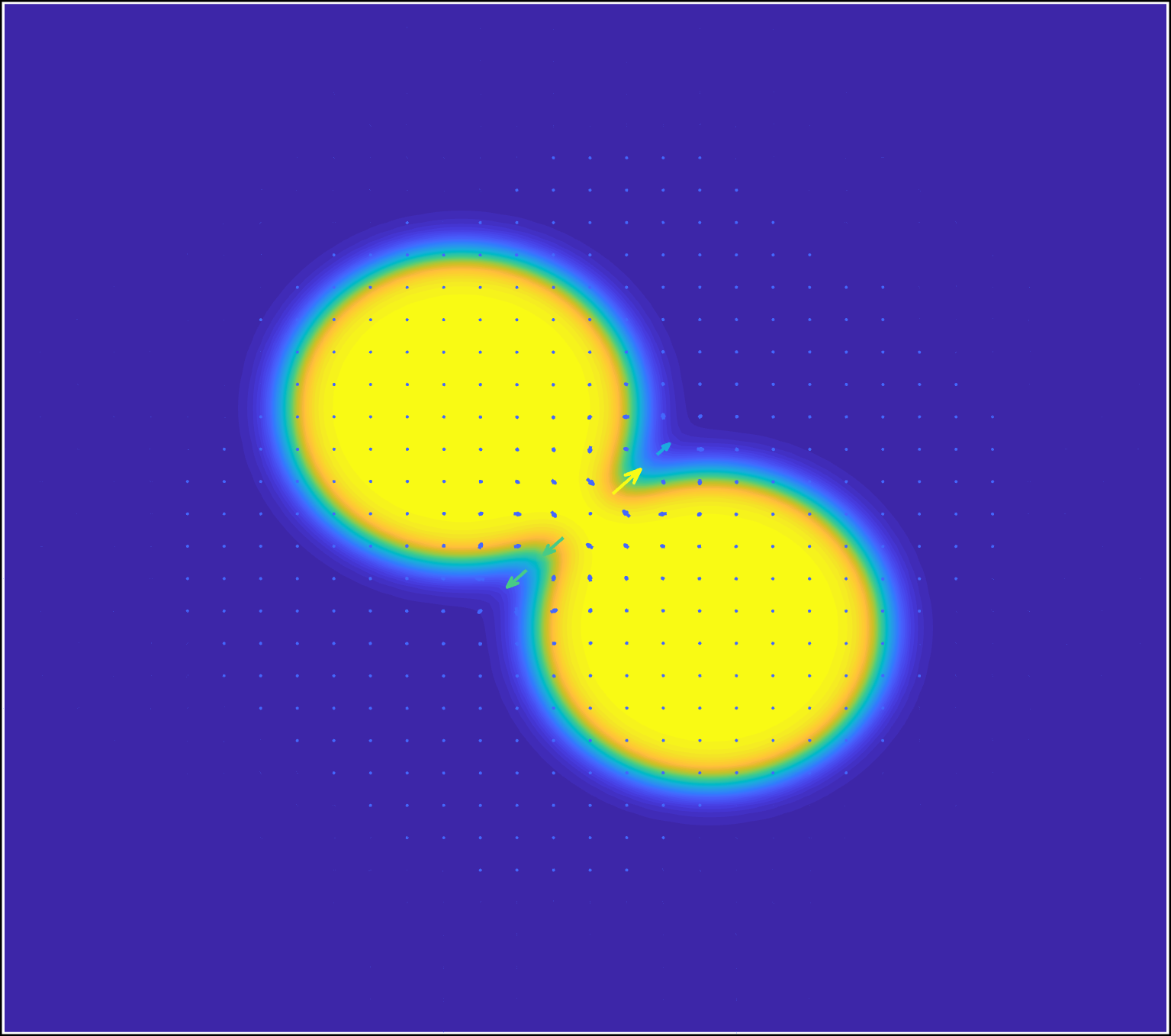}
		\includegraphics[width=0.18\textwidth]{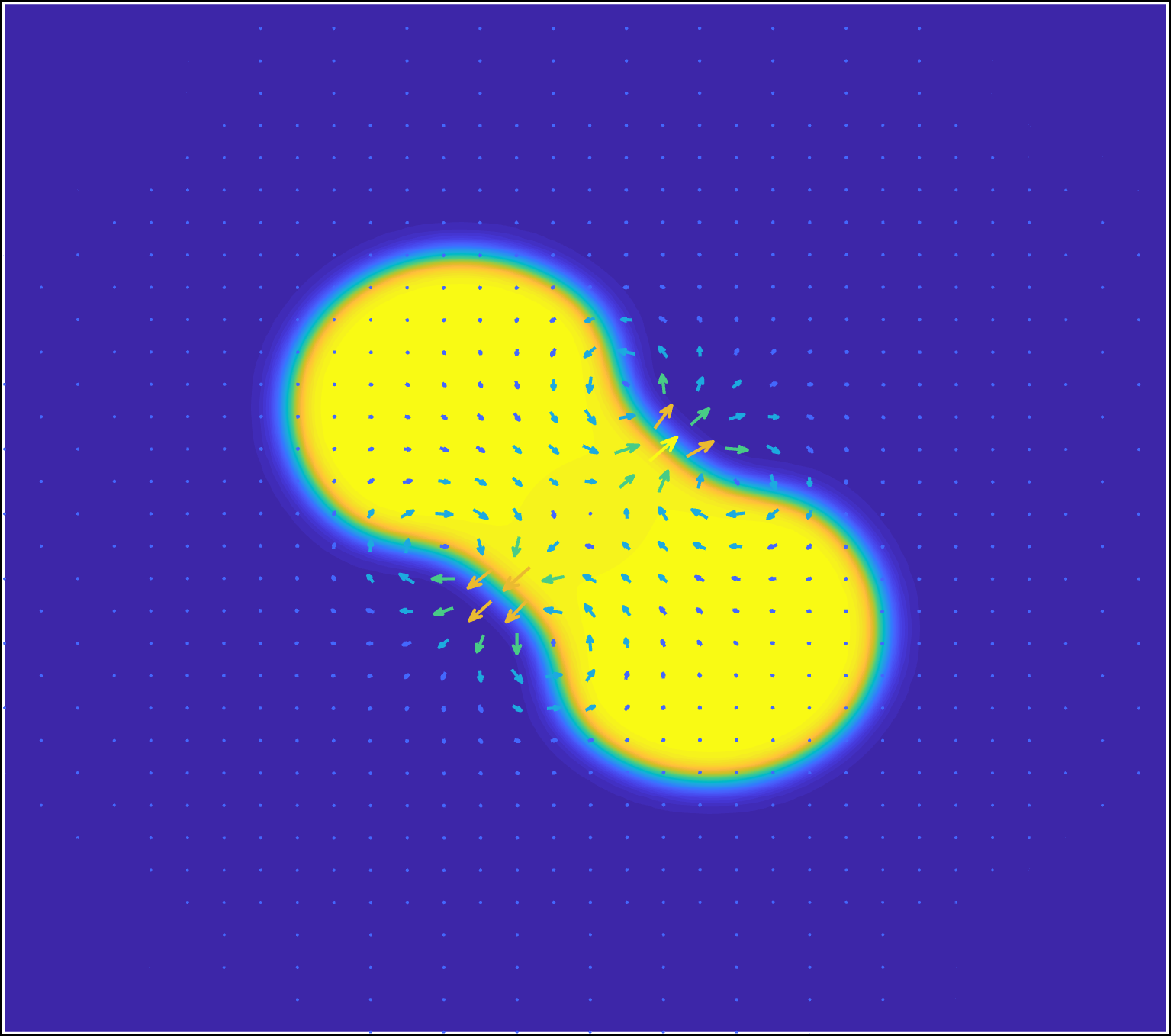}
		\includegraphics[width=0.18\textwidth]{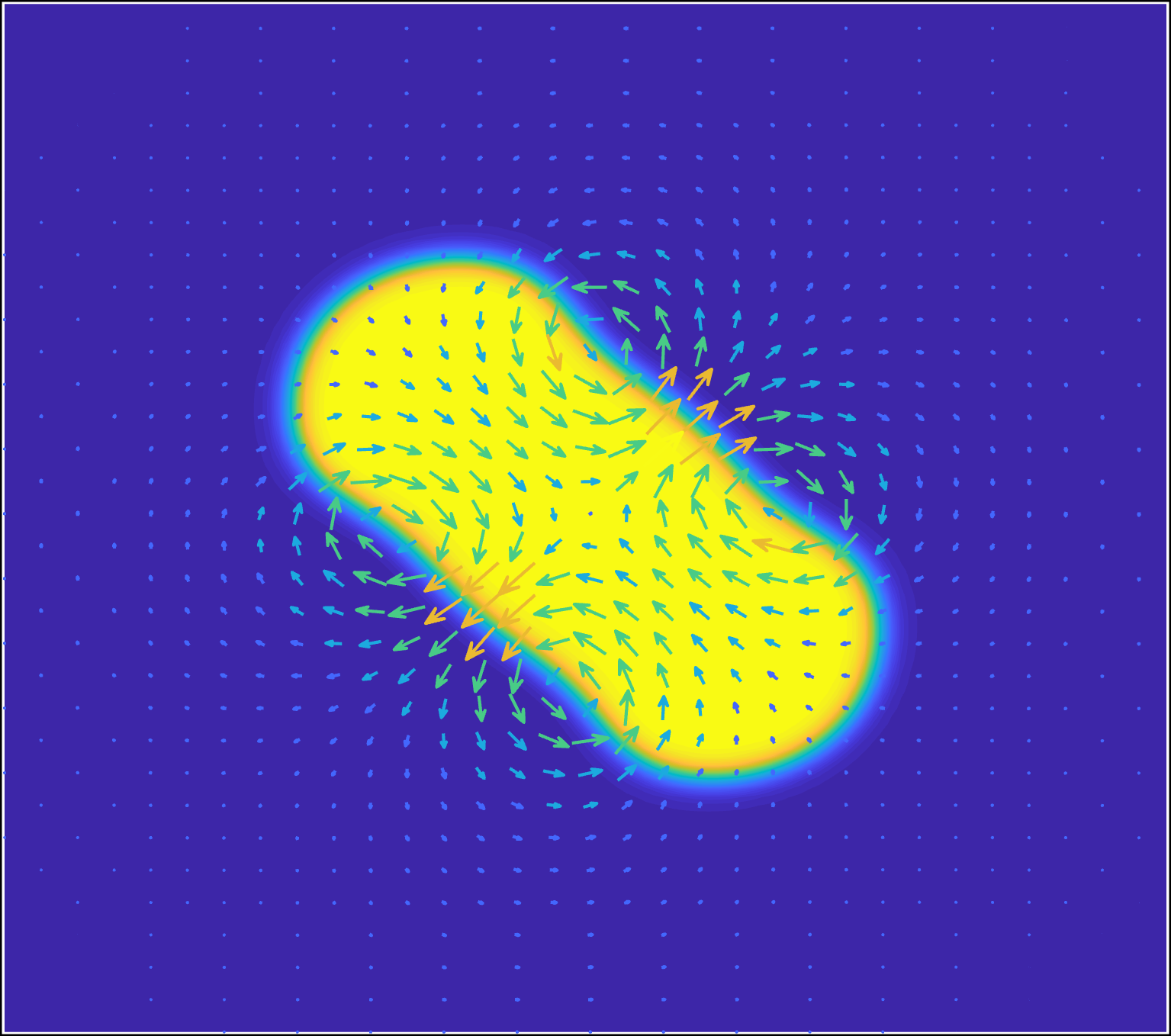}
		\includegraphics[width=0.18\textwidth]{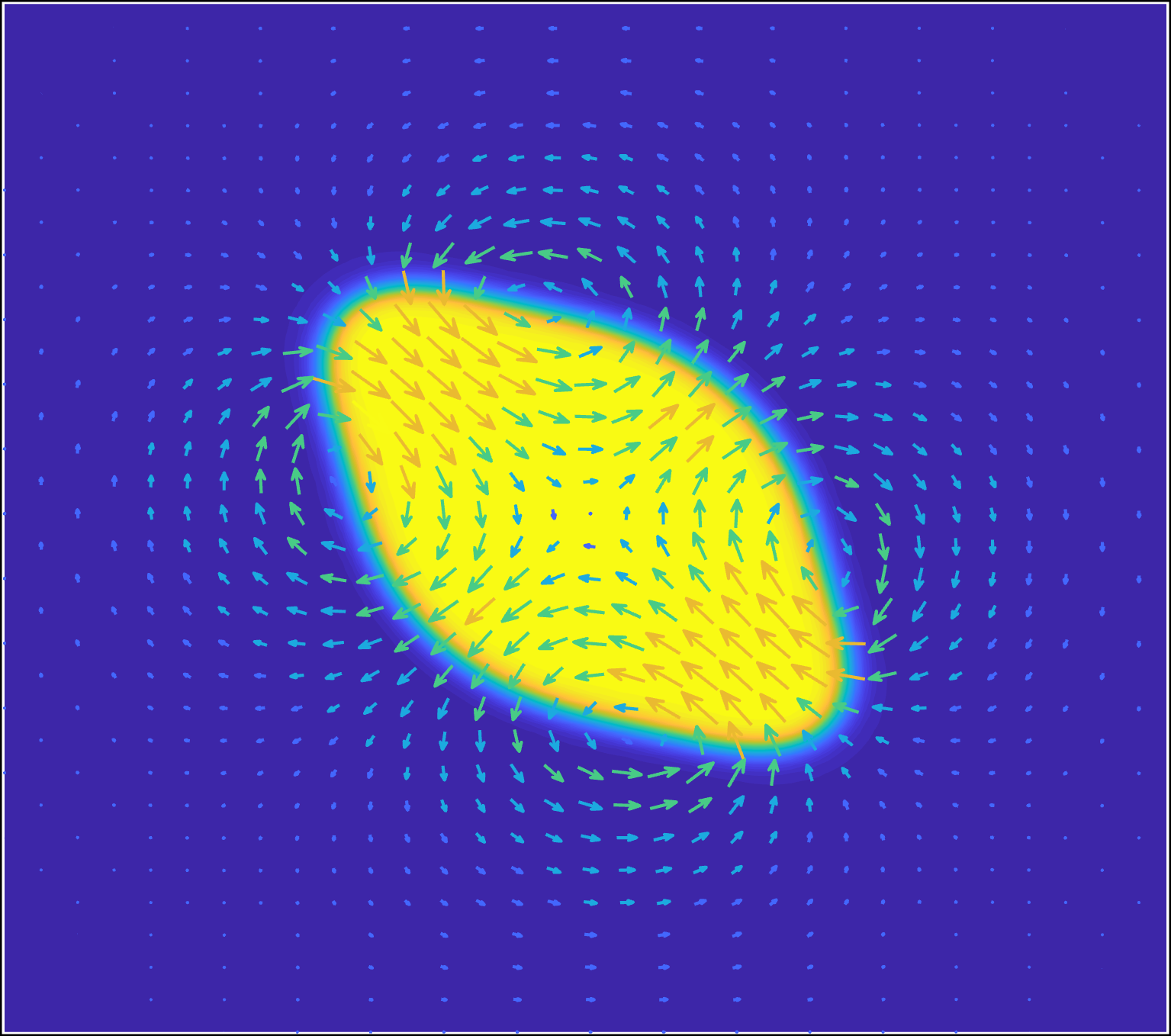}
		\includegraphics[width=0.18\textwidth]{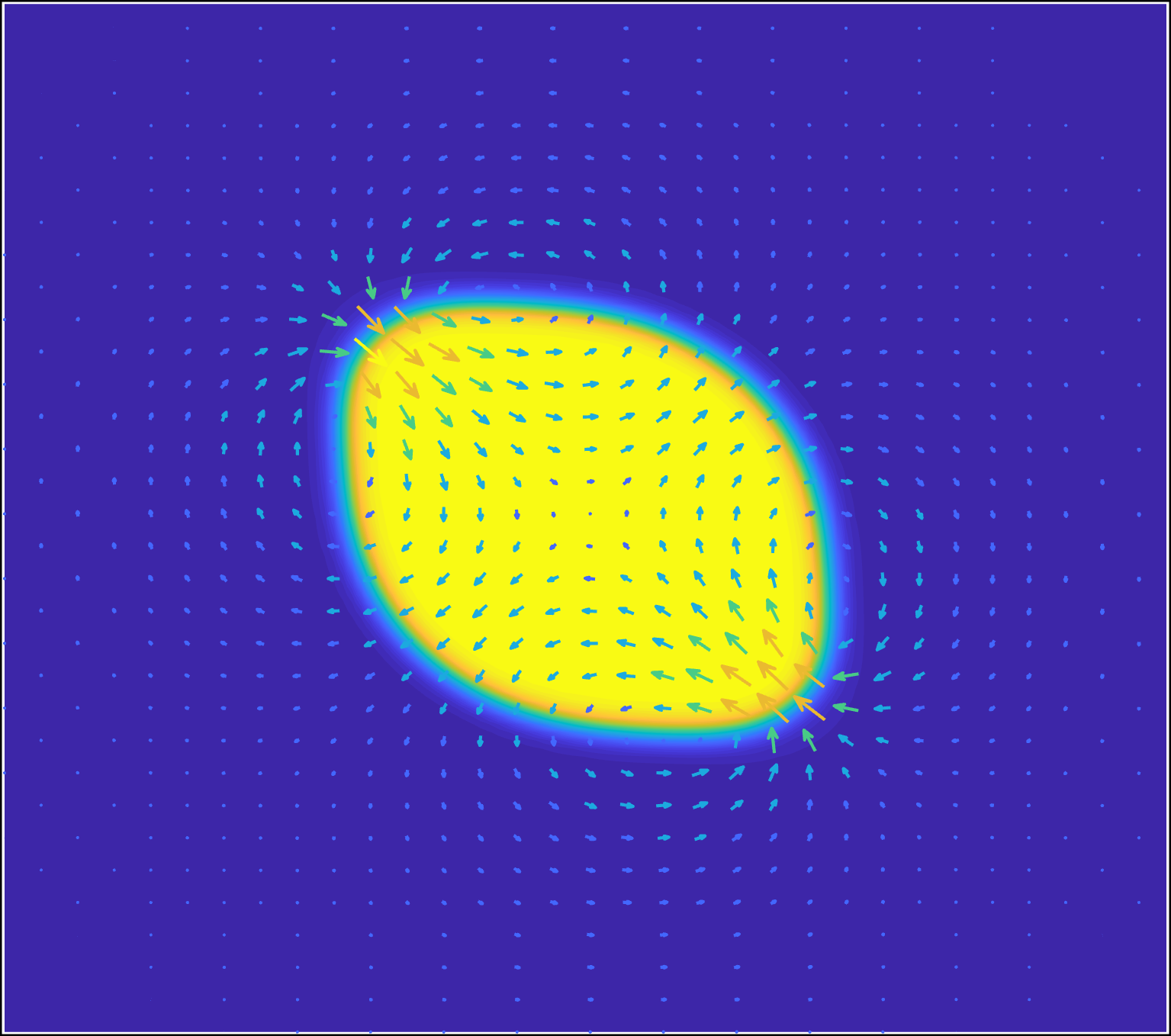}

		\includegraphics[width=0.18\textwidth]{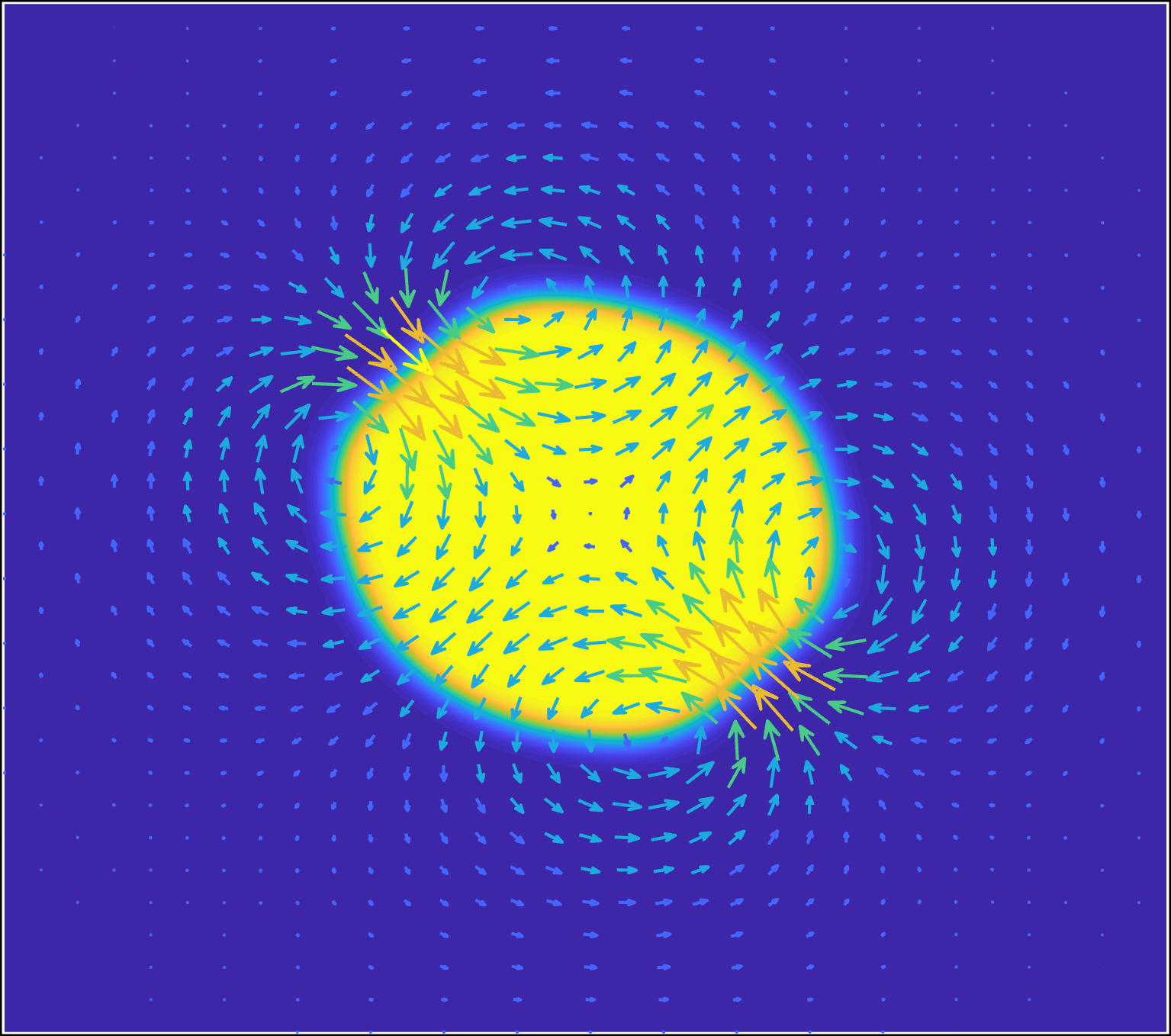}
		\includegraphics[width=0.18\textwidth]{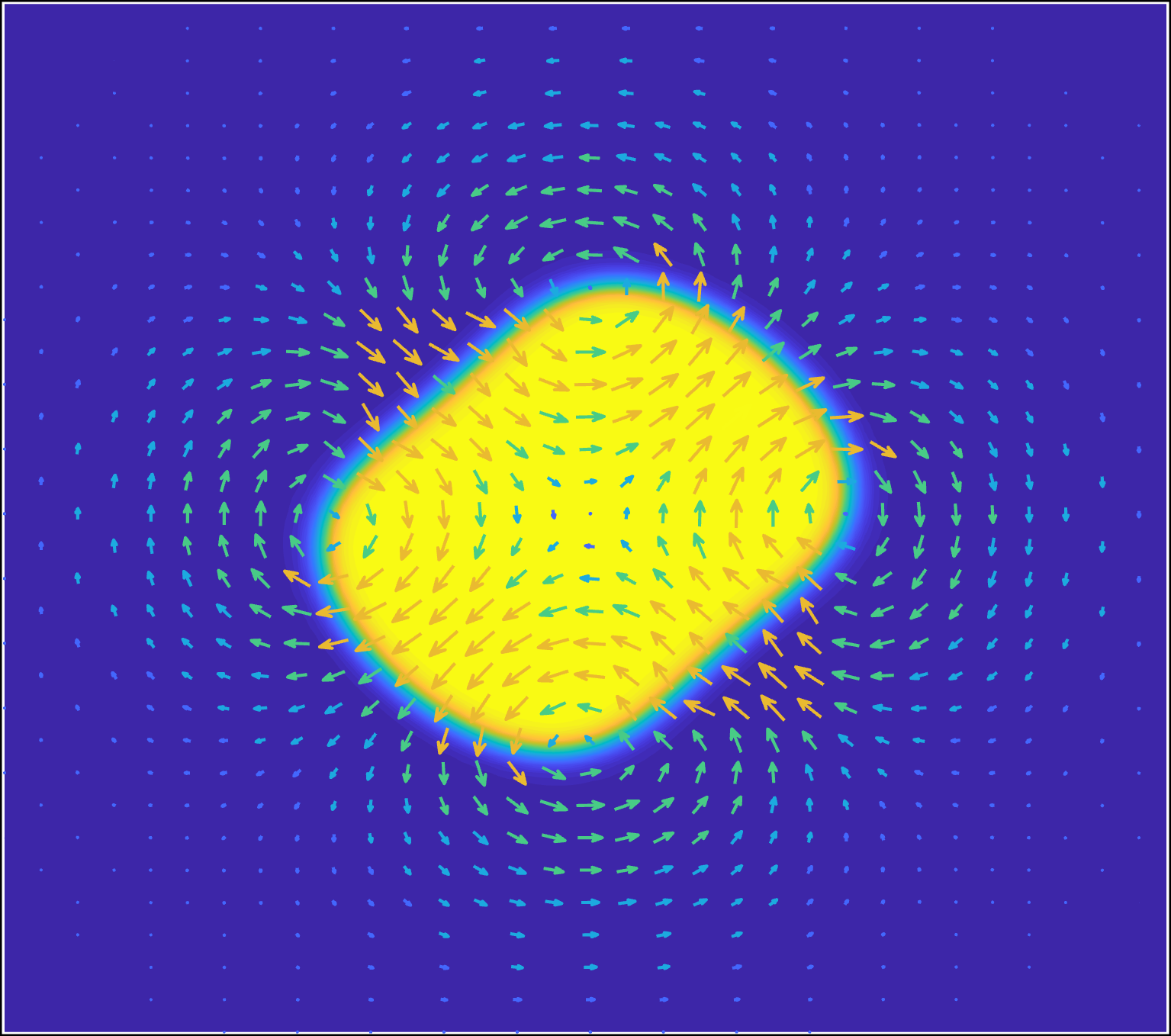}
		\includegraphics[width=0.18\textwidth]{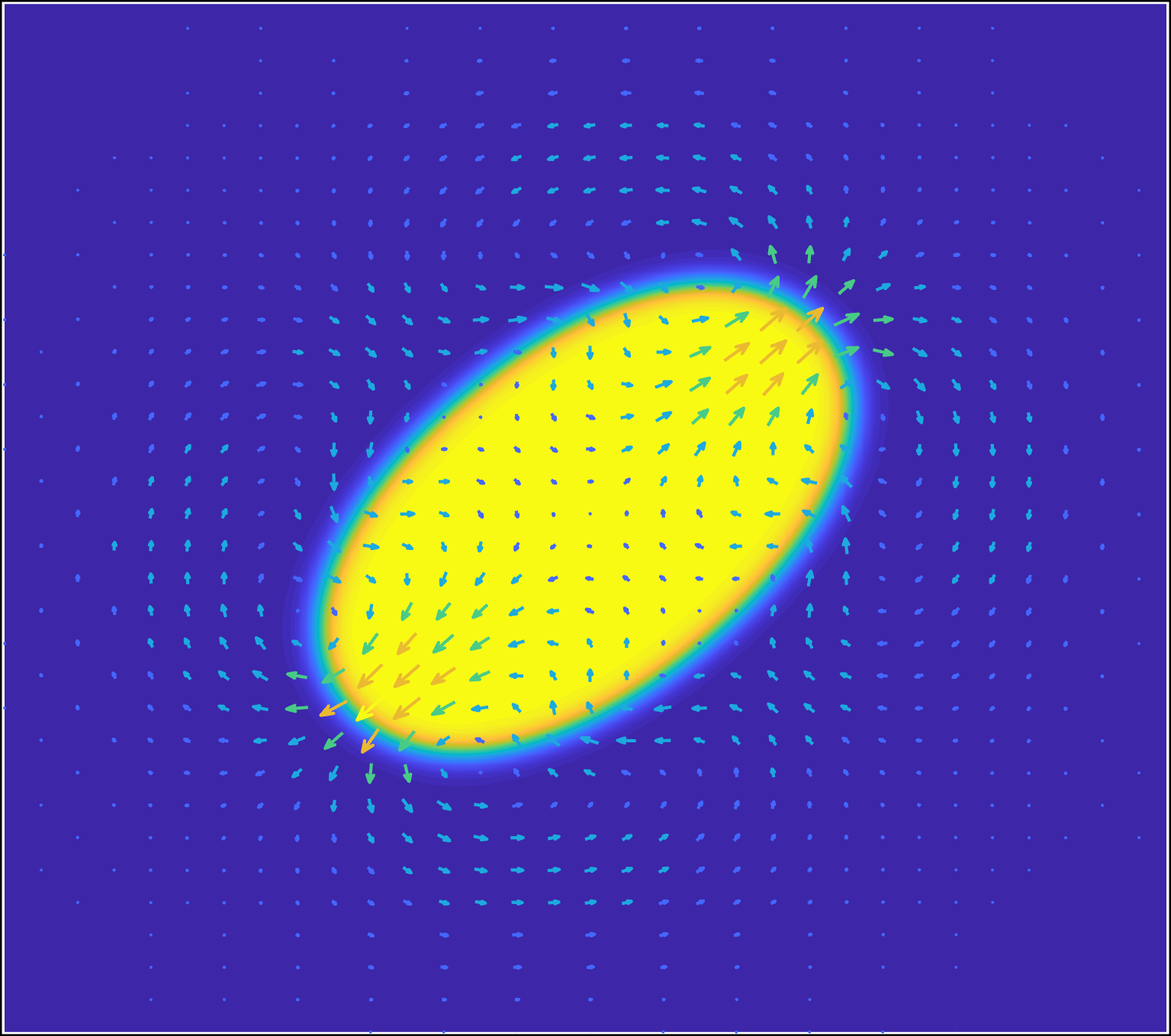}
		\includegraphics[width=0.18\textwidth]{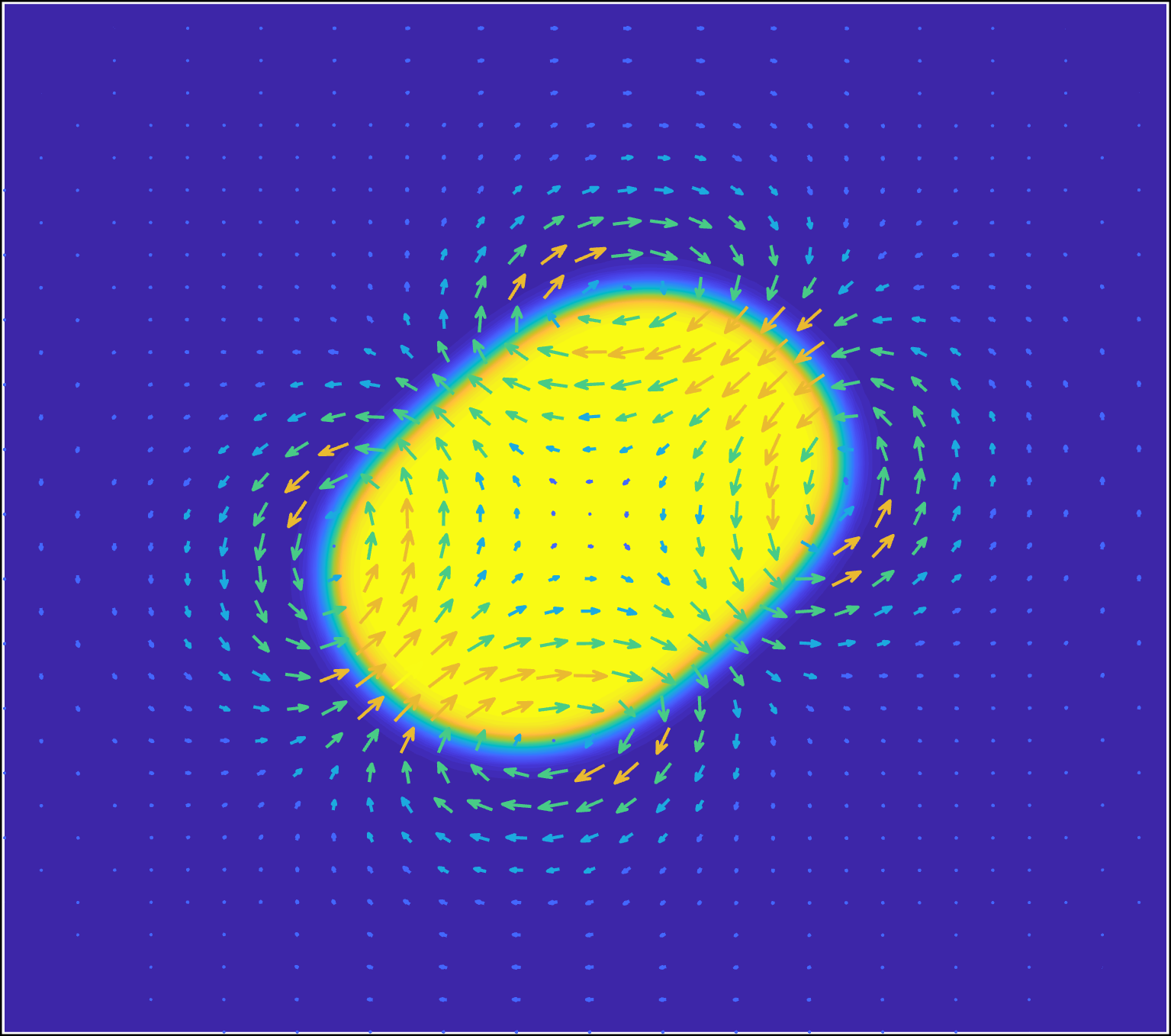}
		\includegraphics[width=0.18\textwidth]{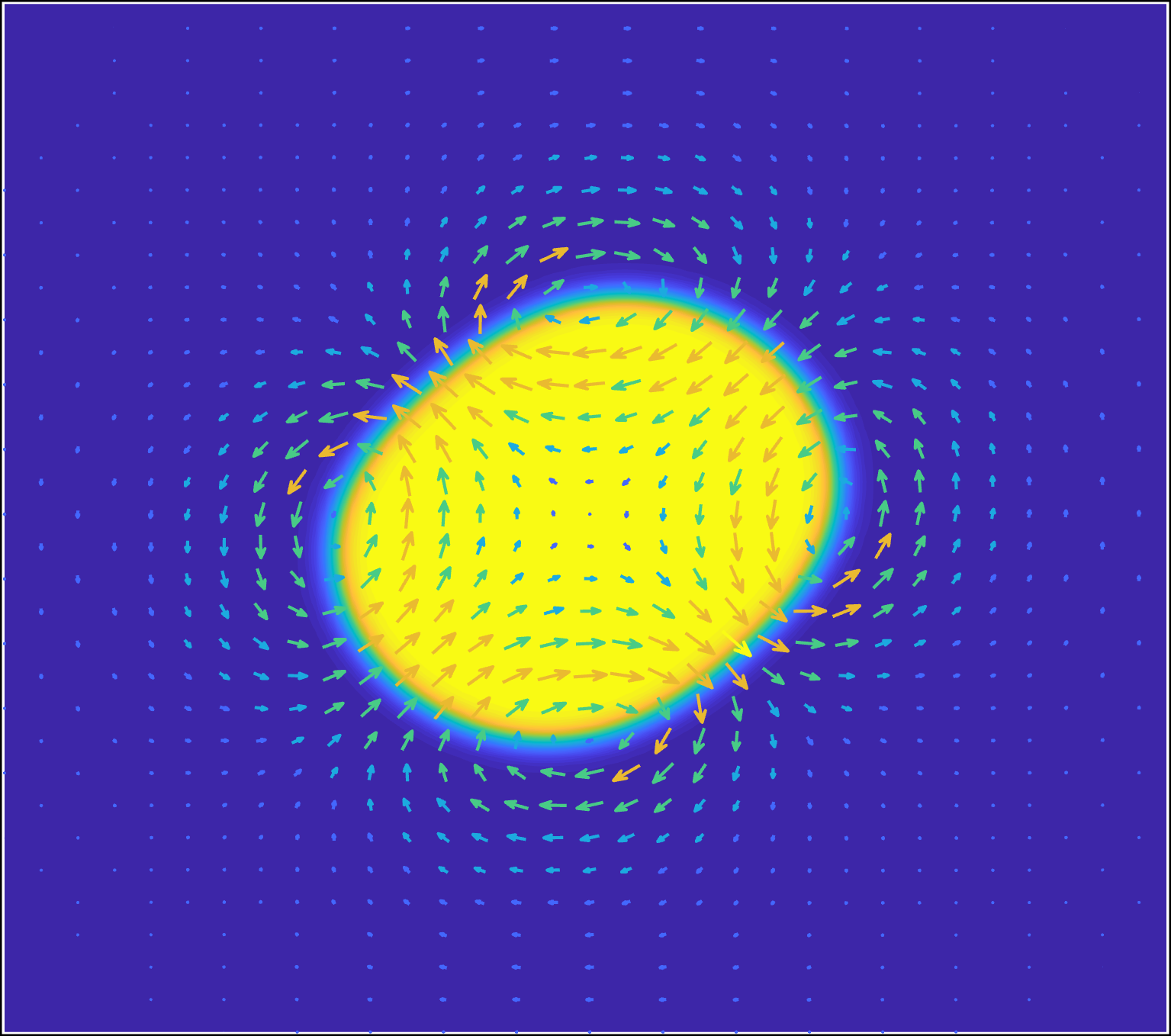}

		\includegraphics[width=0.18\textwidth]{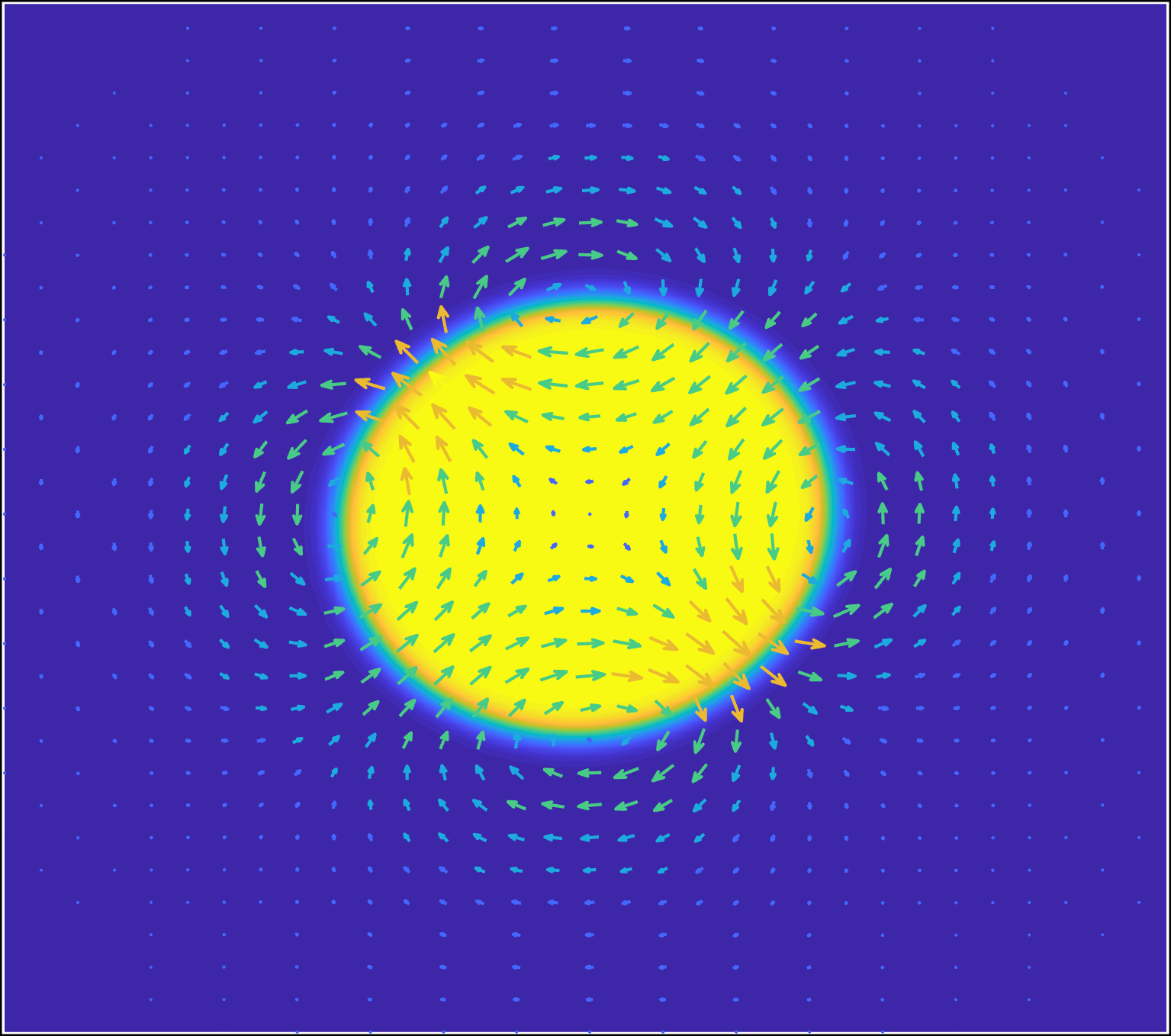}
		\includegraphics[width=0.18\textwidth]{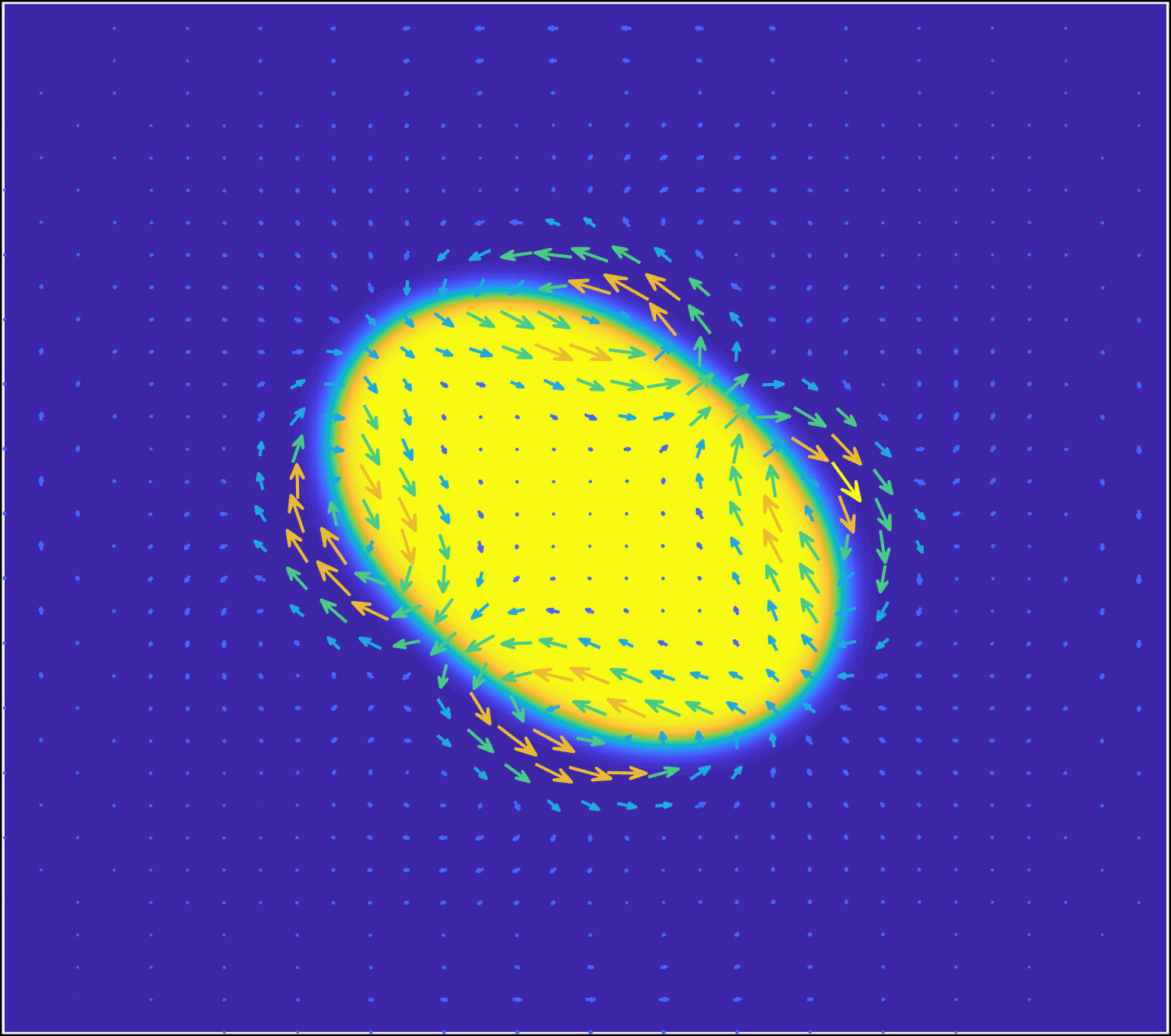}
		\includegraphics[width=0.18\textwidth]{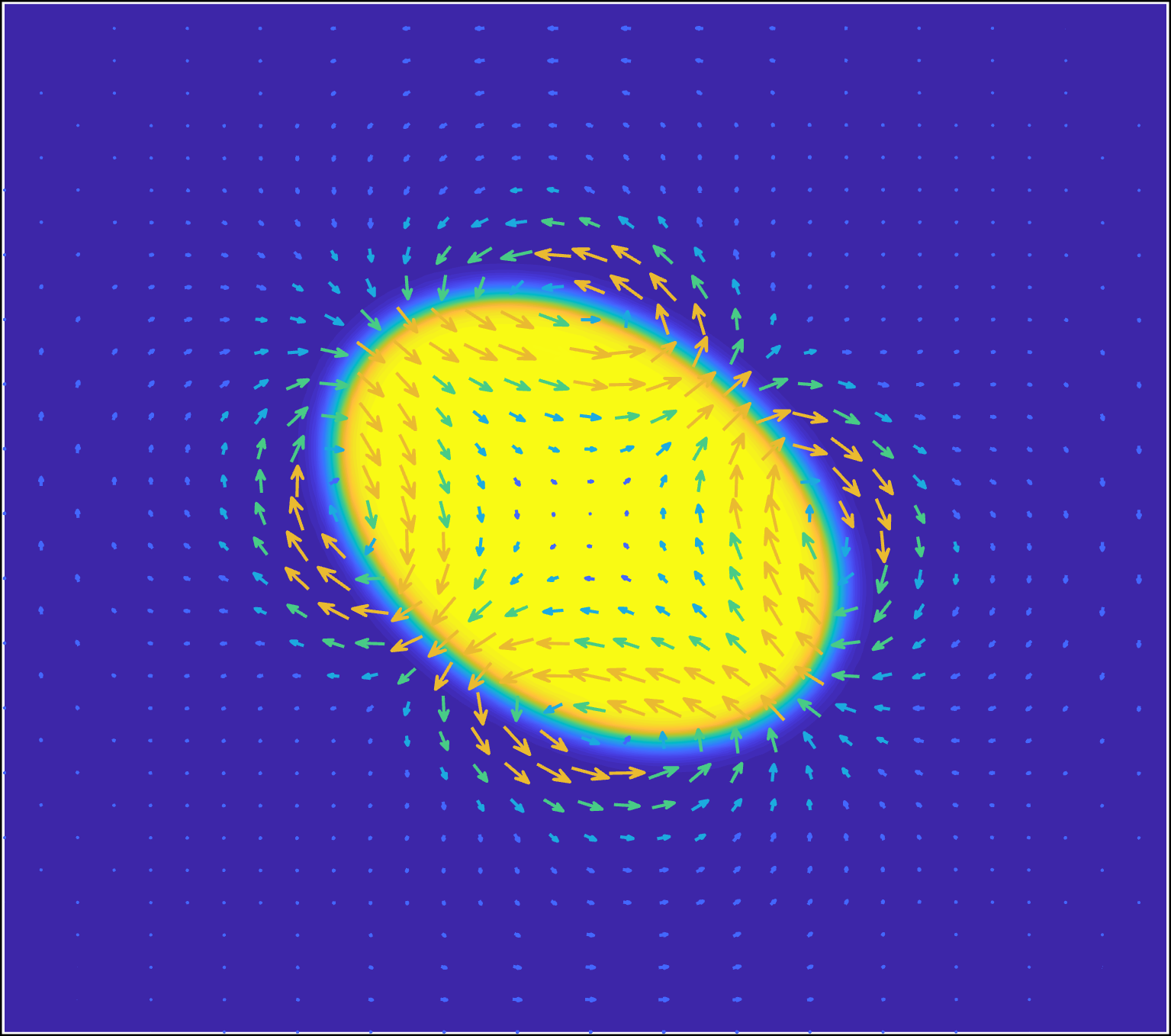}
		\includegraphics[width=0.18\textwidth]{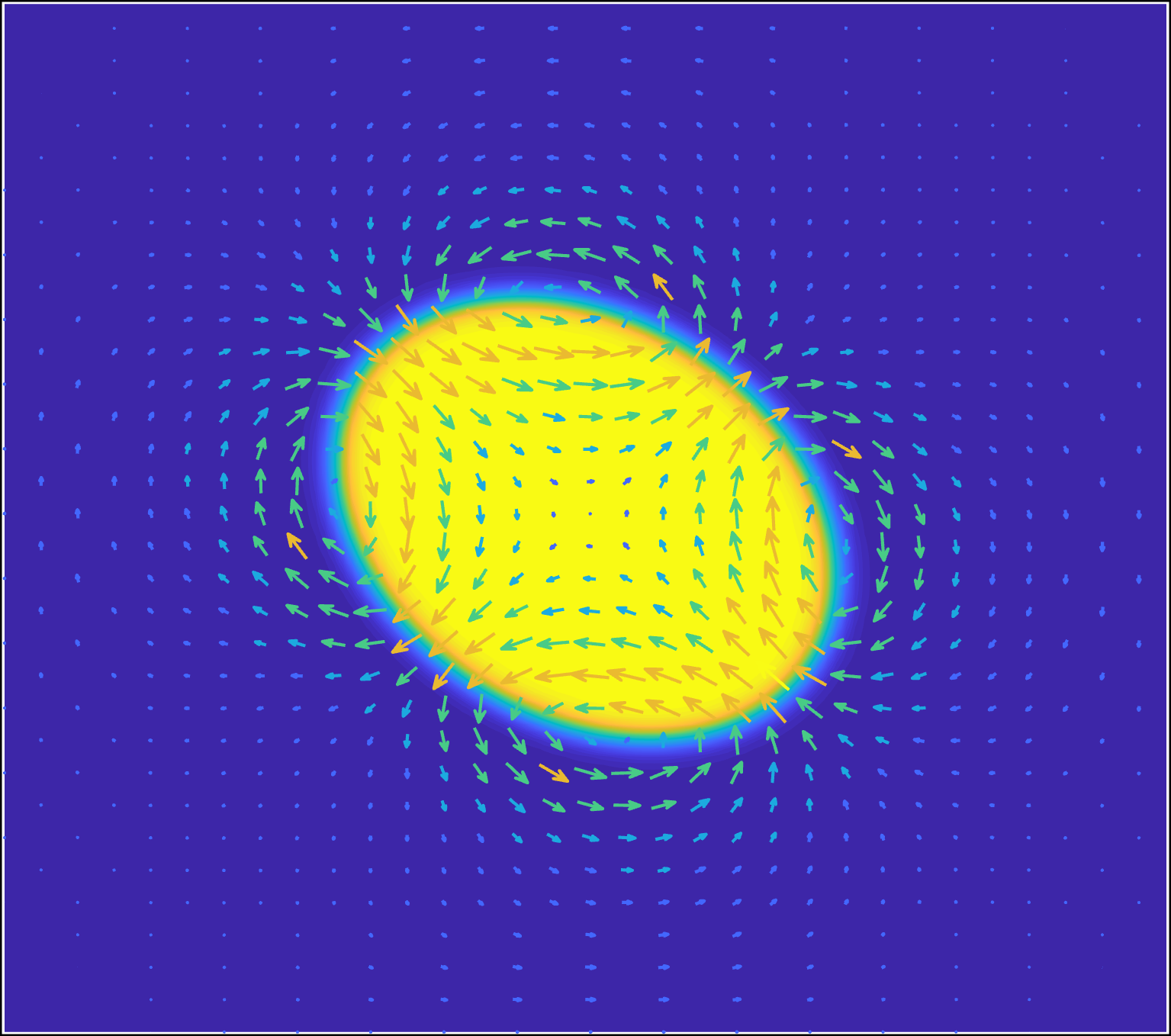}
		\includegraphics[width=0.18\textwidth]{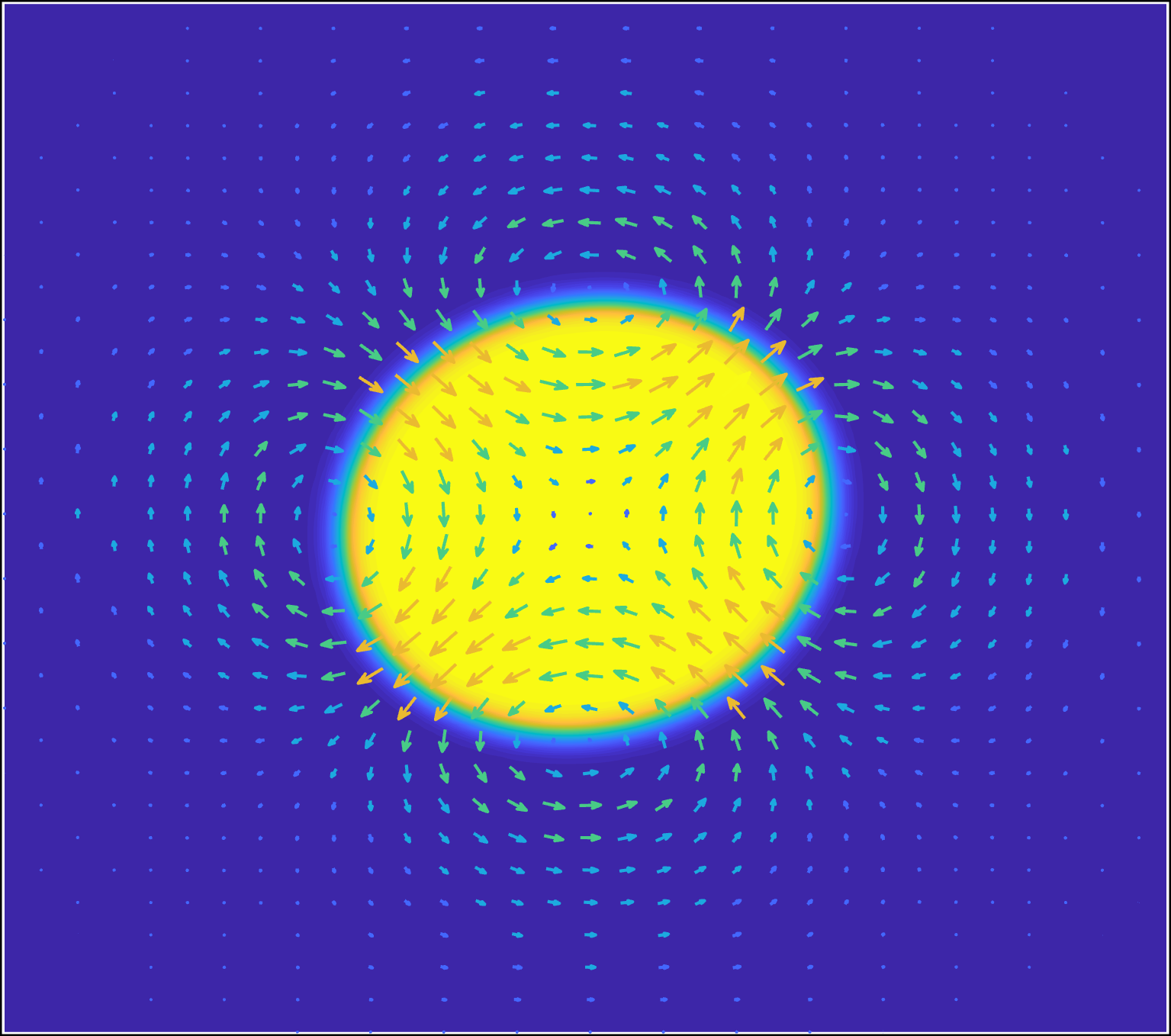}
	\end{center}
	\caption{Snapshots of the phase field $\phi$ and the corresponding velocity for the merged bubble with $\nu=0.001$ at selected time instances. The top row shows the evolution at $t = 0.001, 0.2, 0.4, 0.8, 1.0$; the middle row at $t = 1.2, 1.5, 2.0, 3.0, 3.2$; and the bottom row at $t = 3.4, 4.6, 4.8, 5.0, 10.0$.}\label{fig:chns-merged-bubble-2}
\end{figure}

\section{Conclusions}
We introduce the Skew Gradient Embedding (SGE) framework to systematically reformulate thermodynamically consistent partial differential equations (TCPDEs)-capturing both reversible and irreversible processes-into a generalized gradient flow structure. This reformulation enables the construction of thermodynamically consistent numerical schemes. The SGE framework provides a unified strategy for developing energy dissipation rate-preserving algorithms applicable to both conservative and dissipative systems. Efficient numerical schemes are constructed by explicitly discretizing the skew-symmetric (reversible) component, while stabilized schemes for the dissipative part are systematically derived using convex splitting techniques. For a broad class of multiphysics TCPDEs, the explicit treatment of the skew-symmetric term leads to natural decoupling of the governing equations, significantly improving computational efficiency without compromising stability or accuracy. Numerical experiments demonstrate the accuracy, robustness, and efficiency of the proposed methods. This systematic modeling and computational framework applies to a wide range of TCPDEs arising in electrodynamics, fluid mechanics, quantum mechanics, and statistical physics, offering a versatile toolset for algorithm design in computational science and engineering.

\section*{Acknowledgments}
Xuelong Gu's research is supported by an NSF award OIA-2242812. Qi Wang’s research is partially supported by NSF awards  DMS-2038080 and OIA-2242812, a DOE award DE-SC0025229, and an SC GEAR award.

\bibliographystyle{siamplain}
\bibliography{ref}

\end{document}